\renewcommand{\eqref}[1]{\hyperref[#1]{(\ref{#1})}}
\newlist{enumlist}{enumerate}{1}
\setlist[enumlist]{labelindent=0cm,label=\arabic*.,ref=\arabic*,labelwidth=2.5ex,labelsep=0.5ex,leftmargin=3ex,align=left,topsep=0.5ex,itemsep=1ex,parsep=1ex}
\newlist{itemlist}{itemize}{1}
\setlist[itemlist]{labelindent=0cm,label=$\bullet$,labelwidth=2.5ex,labelsep=0.5ex,leftmargin=3ex,align=left,topsep=0.5ex,itemsep=1ex,parsep=1ex}
\newlist{lijst}{itemize}{1}
\setlist[lijst]{labelindent=0cm,label={},labelwidth=4ex,labelsep=1ex,leftmargin=5ex,align=left,topsep=0.5ex,itemsep=1ex,parsep=1ex}
\numberwithin{equation}{section}
\theoremstyle{definition}\newtheorem{definition}{Definition}[section]
\newtheorem{remark}[definition]{Remark}
\newtheorem{example}[definition]{Example}}
\newtheorem{proposition}[definition]{Proposition}
\newtheorem{lemma}[definition]{Lemma}
\newtheorem{theorem}[definition]{Theorem}
\newtheorem{corollary}[definition]{Corollary}
\newcommand{\C}{\mathbb{C}}
\newcommand{\cC}{\mathcal{C}}
\newcommand{\eps}{\varepsilon}
\newcommand{\al}{\alpha}
\newcommand{\be}{\beta}
\newcommand{\albar}{\overline{\alpha}}
\newcommand{\End}{\operatorname{End}}
\newcommand{\Irr}{\operatorname{Irr}}
\newcommand{\Rep}{\operatorname{Rep}}
\newcommand{\ot}{\otimes}
\newcommand{\recht}{\rightarrow}
\newcommand{\cb}{_\text{\rm cb}}
\newcommand{\Z}{\mathbb{Z}}
\newcommand{\vphi}{\varphi}
\newcommand{\cO}{\mathcal{O}}
\newcommand{\id}{\mathord{\text{\rm id}}}
\newcommand{\om}{\omega}
\newcommand{\SU}{\operatorname{SU}}
\newcommand{\bebar}{\overline{\beta}}
\newcommand{\pibar}{\overline{\pi}}
\newcommand{\cL}{\mathcal{L}}
\newcommand{\ovt}{\mathbin{\overline{\otimes}}}
\newcommand{\Tr}{\operatorname{Tr}}
\newcommand{\Om}{\Omega}
\newcommand{\R}{\mathbb{R}}
\newcommand{\counit}{\epsilon}
\newcommand{\cH}{\mathcal{H}}
\newcommand{\mubar}{\overline{\mu}}
\newcommand{\cZ}{\mathcal{Z}}
\newcommand{\Ad}{\operatorname{Ad}}
\newcommand{\gammabar}{\overline{\gamma}}
\newcommand{\cG}{\mathcal{G}}
\newcommand{\cK}{\mathcal{K}}
\newcommand{\cF}{\mathcal{F}}
\newcommand{\actson}{\curvearrowright}
\newcommand{\cS}{\mathcal{S}}
\newcommand{\cA}{\mathcal{A}}
\newcommand{\cHbar}{\overline{\cH}}
\newcommand{\cW}{\mathcal{W}}
\newcommand{\cU}{\mathcal{U}}
\newcommand{\cM}{\mathcal{M}}
\newcommand{\lspan}{\operatorname{span}}
\newcommand{\onb}{\operatorname{onb}}
\newcommand{\dpr}{^{\prime\prime}}
\newcommand{\cV}{\mathcal{V}}
\newcommand{\cE}{\mathcal{E}}
\newcommand{\Aut}{\operatorname{Aut}}
\newcommand{\vphitil}{\widetilde{\varphi}}
\newcommand{\cP}{\mathcal{P}}
\newcommand{\Bimod}{\operatorname{Bimod}}
\newcommand{\Stab}{\operatorname{Stab}}
\newcommand{\xitil}{\widetilde{\xi}}
\newcommand{\Pol}{\operatorname{Pol}}
\newcommand{\Deltatil}{\widetilde{\Delta}}
\newcommand{\Vtil}{\widetilde{V}}
\newcommand{\betabar}{\overline{\beta}}
\newcommand{\Ztil}{\widetilde{Z}}
\newcommand{\mutil}{\widetilde{\mu}}
\newcommand{\Deltah}{\widehat{\Delta}}
\newcommand{\pitil}{\widetilde{\pi}}
\newcommand{\cQ}{\mathcal{Q}}
\newcommand{\bI}{\mathbb{I}}
\newcommand{\SL}{\operatorname{SL}}
\newcommand{\Q}{\mathbb{Q}}
\begin{document}

\begin{center}
{\boldmath\LARGE\bf C$^*$-tensor categories and subfactors \vspace{0.5ex}\\ for totally disconnected groups}

\bigskip

{\sc by Yuki Arano\footnote{Graduate School of Mathematics, University of Tokyo, 3-8-1, Komaba, Meguro-ku,
Tokyo (Japan), arano@ms.u-tokyo.ac.jp. Supported by the Research Fellow of the Japan Society for the Promotion of Science and the Program for Leading Graduate
Schools, MEXT, Japan.} and Stefaan Vaes\footnote{KU~Leuven, Department of Mathematics, Leuven (Belgium), stefaan.vaes@wis.kuleuven.be \\
    Supported in part by European Research Council Consolidator Grant 614195, and by long term structural funding~-- Methusalem grant of the Flemish Government.}}
\end{center}

\begin{abstract}\noindent
We associate a rigid C$^*$-tensor category $\cC$ to a totally disconnected locally compact group $G$ and a compact open subgroup $K < G$. We characterize when $\cC$ has the Haagerup property or property~(T), and when $\cC$ is weakly amenable. When $G$ is compactly generated, we prove that $\cC$ is essentially equivalent to the planar algebra associated by Jones and Burstein to a group acting on a locally finite bipartite graph. We then concretely realize $\cC$ as the category of bimodules generated by a hyperfinite subfactor.
\end{abstract}

\section{Introduction}

Rigid C$^*$-tensor categories arise as representation categories of compact groups and compact quantum groups and also as (part of) the standard invariant of a finite index subfactor. They can be viewed as a discrete group like structure and this analogy has lead to a lot of recent results with a flavor of geometric group theory, see \cite{PV14,NY15a,GJ15,NY15b,PSV15}.

In this paper, we introduce a rigid C$^*$-tensor category $\cC$ canonically associated with a totally disconnected locally compact group $G$ and a compact open subgroup $K < G$. Up to Morita equivalence, $\cC$ does not depend on the choice of $K$. The tensor category $\cC$ can be described in several equivalent ways, see Section \ref{sec.our-categories}. Here, we mention that the representation category of $K$ is a full subcategory of $\cC$ and that the ``quotient'' of the fusion algebra of $\cC$ by $\Rep K$ is the Hecke algebra of finitely supported functions on $K \backslash G / K$ equipped with the convolution product.

When $G$ is compactly generated, we explain how the C$^*$-tensor category $\cC$ is related to the planar algebra $\cP$ (i.e.\ standard invariant of a subfactor) associated in \cite{Jo98,Bu10} with a locally finite bipartite graph $\cG$ and a closed subgroup $G < \Aut(\cG)$. At the same time, we prove that these planar algebras $\cP$ can be realized by a \emph{hyperfinite} subfactor.

Given a finite index subfactor $N \subset M$, the notions of \emph{amenability}, \emph{Haagerup property} and \emph{property~(T)} for its standard invariant $\cG_{N,M}$ were introduced by Popa in \cite{Po94,Po99,Po01} in terms of the associated symmetric enveloping algebra $T \subset S$ (see \cite{Po94,Po99}) and shown to only depend on $\cG_{N,M}$. Denoting by $\cC$ the tensor category of $M$-$M$-bimodules generated by the subfactor, these properties were then formulated in \cite{PV14} intrinsically in terms of $\cC$, and in particular directly in terms of $\cG_{N,M}$. We recall these definitions and equivalent formulations in Section~\ref{sec.haagerup-T}. Similarly, \emph{weak amenability} and the corresponding Cowling-Haagerup constant for the standard invariant $\cG_{N,M}$ of a subfactor $N \subset M$ were first defined in terms of the symmetric enveloping inclusion in \cite{Br14} and then intrinsically for rigid C$^*$-tensor categories in \cite{PV14}, see Section~\ref{sec.weak-amen-tensor-cat}. Reinterpreting \cite{DFY13,Ar14}, it was proved in \cite{PV14} that the representation category of $\SU_q(2)$ (and thus, the Temperley-Lieb-Jones standard invariant) is weakly amenable and has the Haagerup property, while the representation category of $\SU_q(3)$ has property~(T).

For the C$^*$-tensor categories $\cC$ that we associate to a totally disconnected group $G$, we characterize when $\cC$ has the Haagerup property or property~(T) and when $\cC$ is weakly amenable. We give several examples and counterexamples, in particular illustrating that the Haagerup property/weak amenability of $G$ is not sufficient for $\cC$ to have the Haagerup property or to be weakly amenable. Even more so, when $\cC$ is the category associated with $G = \SL(2,\Q_p)$, then the subcategory $\Rep K$ with $K = \SL(2,\Z_p)$ has the relative property~(T). When $G = \SL(n,\Q_p)$ with $n \geq 3$, the tensor category $\cC$ has property~(T), but we also give examples of property~(T) groups $G$ such that $\cC$ does not have property~(T).

Our main technical tool is Ocneanu's tube algebra \cite{Oc93} associated with any rigid C$^*$-tensor category, see Section \ref{sec.tube-algebra}. When $\cC$ is the C$^*$-tensor category of a totally disconnected group $G$, we prove that the tube algebra is isomorphic with a canonical dense $*$-subalgebra of $C_0(G) \rtimes_{\Ad} G$, where $G$ acts on $G$ by conjugation. We can therefore express the above mentioned approximation and rigidity properties of the tensor category $\cC$ in terms of $G$ and the dynamics of the conjugation action $G \actson^{\Ad} G$.

In this paper, all locally compact groups are assumed to be second countable. We call totally disconnected group every second countable, locally compact, totally disconnected group.

\section{\boldmath C$^*$-tensor categories of totally disconnected groups}\label{sec.our-categories}

Throughout this section, fix a totally disconnected group $G$. For all compact open subgroups $K_1, K_2 < G$, we define

\begin{lijst}
\item[$\cC_1 :$] the category of $K_1$-$K_2$-$L^\infty(G)$-modules, i.e.\ Hilbert spaces $\cH$ equipped with commuting unitary representations $(\lambda(k_1))_{k_1 \in K_1}$ and $(\rho(k_2))_{k_2 \in K_2}$ and with a normal $*$-representation $\Pi : L^\infty(G) \recht B(\cH)$ that are equivariant with respect to the left translation action $K_1 \actson G$ and the right translation action $K_2 \actson G$~;

\item[$\cC_2 :$] the category of $K_1$-$L^\infty(G/K_2)$-modules, i.e.\ Hilbert spaces $\cH$ equipped with a unitary representation $(\pi(k_1))_{k_1 \in K_1}$ and a normal $*$-representation $\Pi : L^\infty(G/K_2) \recht B(\cH)$ that are covariant with respect to the left translation action $K_1 \actson G/K_2$~;

\item[$\cC_3 :$] the category of $G$-$L^\infty(G/K_1)$-$L^\infty(G/K_2)$-modules, i.e.\ Hilbert spaces $\cH$ equipped with a unitary representation $(\pi(g))_{g \in G}$ and with an $L^\infty(G/K_1)$-$L^\infty(G/K_2)$-bimodule structure that are equivariant with respect to the left translation action of $G$ on $G/K_1$ and $G/K_2$~;
\end{lijst}
and with morphisms given by bounded operators that intertwine the given structure.

Let $K_3 < G$ also be a compact open subgroup. We define the tensor product $\cH \ot_{K_2} \cK$ of a $K_1$-$K_2$-$L^\infty(G)$-module $\cH$ and a $K_2$-$K_3$-$L^\infty(G)$-module $\cK$ as the Hilbert space
$$\cH \ot_{K_2} \cK = \{ \xi \in \cH \ot \cK \mid (\rho(k_2) \ot \lambda(k_2))\xi = \xi \;\;\text{for all}\;\; k_2 \in K_2 \}$$
equipped with the unitary representations $(\lambda(k_1) \ot 1)_{k_1 \in K_1}$ and $(1 \ot \rho(k_3))_{k_3 \in K_3}$ and with the representation $(\Pi_\cH \ot \Pi_\cK) \circ \Delta$ of $L^\infty(G)$, where $\Delta : L^\infty(G) \recht L^\infty(G) \ovt L^\infty(G)$ is the comultiplication given by $(\Delta(F))(g,h) = F(gh)$ for all $g,h \in G$.

The tensor product of a $G$-$L^\infty(G/K_1)$-$L^\infty(G/K_2)$-module $\cH$ and a $G$-$L^\infty(G/K_2)$-$L^\infty(G/K_3)$-module $\cK$ is denoted as $\cH \ot_{L^\infty(G/K_2)} \cK$ and defined as the Hilbert space
\begin{align*}
\cH \ot_{L^\infty(G/K_2)} \cK &= \{ \xi \in \cH \ot \cK \mid \xi (1_{g K_2} \ot 1) = (1 \ot 1_{gK_2}) \xi \;\;\text{for all}\;\; gK_2 \in G/K_2 \} \\
&= \bigoplus_{g \in G/K_2} \cH \cdot 1_{gK_2} \ot 1_{gK_2} \cdot \cK
\end{align*}
with the unitary representation $(\pi_\cH(g) \ot \pi_\cK(g))_{g \in G}$ and with the $L^\infty(G/K_1)$-$L^\infty(G/K_3)$-bimodule structure given by the left action of $1_{gK_1} \ot 1$ for $gK_1 \in G/K_1$ and the right action of $1 \ot 1_{h K_3}$ for $hK_3 \in G/K_3$.

We say that objects $\cH$ are of \emph{finite rank}
\begin{lijst}
\item[$\cC_1 :$] if $\cH_{K_2} := \{\xi \in \cH \mid \rho(k_2) \xi = \xi \;\;\text{for all}\;\; k_2 \in K_2 \}$ is finite dimensional~; as we will see in the proof of Proposition \ref{prop.some-equivalences-of-categories}, this is equivalent with requiring that $\ _{K_1}\cH$ is finite dimensional~;

\item[$\cC_2 :$] if $\cH$ is finite dimensional~;

\item[$\cC_3 :$] if $1_{eK_1} \cdot \cH$ is finite dimensional~; as we will see in the proof of Proposition \ref{prop.some-equivalences-of-categories}, this is equivalent with requiring that $\cH \cdot 1_{eK_2}$ is finite dimensional.
\end{lijst}

Altogether, we get that $\cC_1$ and $\cC_3$ are \emph{C$^*$-$2$-categories}. In both cases, the $0$-cells are the compact open subgroups of $G$. For all compact open subgroups $K_1, K_2 < G$, the $1$-cells are the categories $\cC_i(K_1,K_2)$ defined above and $\cC_i(K_1,K_2) \times \cC_i(K_2,K_3) \recht \cC_i(K_1,K_3)$ is given by the tensor product operation that we just introduced. Restricting to finite rank objects, we get rigid C$^*$-$2$-categories.

Another typical example of a C$^*$-$2$-category is given by Hilbert bimodules over II$_1$ factors: the $0$-cells are II$_1$ factors, the $1$-cells are the categories $\Bimod_{M_1\text{-}M_2}$ of Hilbert $M_1$-$M_2$-bimodules and $\Bimod_{M_1\text{-}M_2} \times \Bimod_{M_2\text{-}M_3} \recht \Bimod_{M_1\text{-}M_3}$ is given by the Connes tensor product. Again, restricting to finite index bimodules, we get a rigid C$^*$-$2$-category.

\begin{remark}\label{rem.2-category-subfactor}
The standard invariant of an extremal finite index subfactor $N \subset M$ can be viewed as follows as a rigid C$^*$-$2$-category. There are only two $0$-cells, namely $N$ and $M$; the $1$-cells are
the $N$-$N$, $N$-$M$, $M$-$N$ and $M$-$M$-bimodules generated by the subfactor; and we are given a favorite and generating $1$-cell from $N$ to $M$, namely the $N$-$M$-bimodule $L^2(M)$.

Abstractly, a rigid C$^*$-$2$-category $\cC$ with only two $0$-cells (say $+$ and $-$), irreducible tensor units in $\cC_{++}$ and $\cC_{--}$, and a given generating object $\cH \in \cC_{+-}$ is exactly the same as a standard $\lambda$-lattice in the sense of Popa \cite[Definitions 1.1 and 2.1]{Po94b}. Indeed, for every $n \geq 0$, define $\cH_{+,n}$ as the $n$-fold alternating tensor product of $\cH$ and $\cHbar$ starting with $\cH$. Similarly, define $\cH_{-,n}$ by starting with $\cHbar$. For $0 \leq j$, define $A_{0j} = \End(\cH_{+,j})$. When $0 \leq i \leq j < \infty$, define $A_{ij} \subset A_{0j}$ as $A_{ij} := 1^{i} \ot \End(\cH_{(-1)^i,j-i})$ viewed as a subalgebra of $A_{0j} = \End(\cH_{+,j})$ by writing $\cH_{+,j} = \cH_{+,i} \cH_{(-1)^i,j-i}$. The standard solutions for the conjugate equations (see Section \ref{sec.tube-algebra}) give rise to canonical projections $e_+ \in \End(\cH \cHbar)$ and $e_- \in \End(\cHbar \cH)$ given by
$$e_+ = d(\cH)^{-1} s_\cH s_\cH^* \quad\text{and}\quad e_- = d(\cH)^{-1} t_\cH t_\cH^* \; ,$$
and thus to a representation of the Jones projections $e_j \in A_{kl}$ (for $k < j < l$). Finally, if we equip all $A_{ij}$ with the normalized categorical trace, we have defined a standard $\lambda$-lattice in the sense of \cite[Definitions 1.1 and 2.1]{Po94b}. Given two rigid C$^*$-$2$-categories with fixed generating objects as above, it is straightforward to check that the associated standard $\lambda$-lattices are isomorphic if and only if there exists an equivalence of C$^*$-$2$-categories preserving the generators. Conversely given a standard $\lambda$-lattice $\cG$, by \cite[Theorem 3.1]{Po94b}, there exists an extremal subfactor $N \subset M$ whose standard invariant is $\cG$ and we can define $\cC$ as the C$^*$-$2$-category of the subfactor $N \subset M$, generated by the $N$-$M$-bimodule $L^2(M)$ as in the beginning of this remark. One can also define $\cC$ directly in terms of $\cG$ (see e.g.\ \cite[Section 4.1]{MPS08} for a planar algebra version of this construction).

Thus, also subfactor planar algebras in the sense of \cite{Jo99} are ``the same'' as rigid C$^*$-$2$-categories with two $0$-cells and such a given generating object $\cH \in \cC_{+-}$.
\end{remark}


For more background on rigid C$^*$-tensor categories, we refer to \cite{NT13}.

\begin{proposition}\label{prop.some-equivalences-of-categories}
The C$^*$-$2$-categories $\cC_1$ and $\cC_3$ are naturally equivalent. In particular, fixing $K_1=K_2=K$, we get the naturally equivalent rigid C$^*$-tensor categories $\cC_{1,f}(K < G)$ and $\cC_{3,f}(K < G)$. Up to Morita equivalence\footnote{In the sense of \cite[Section 4]{Mu01}, where the terminology weak Morita equivalence is used; see also \cite[Definition 7.3]{PSV15} and \cite[Section 3]{NY15b}.}, these do not depend on the choice of compact open subgroup $K < G$.
\end{proposition}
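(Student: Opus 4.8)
The plan is to construct the equivalence between $\cC_1$ and $\cC_3$ directly and then read off the statements about rigid C$^*$-tensor categories and Morita invariance. First I would pass through the intermediate category $\cC_2$. Given a $K_1$-$K_2$-$L^\infty(G)$-module $\cH$ in $\cC_1$, restrict the $L^\infty(G)$-action to the subalgebra $L^\infty(K_2\backslash G)^{K_2} \cong L^\infty(K_2\backslash G)$ of $K_2$-invariant functions (equivalently, cut down by a suitable projection and use that $K_2$ is \emph{open}, so $K_2\backslash G$ is discrete up to null sets on each coset): this should identify $\cC_1(K_1,K_2)$ with $\cC_2$ after swapping left/right conventions and replacing $K_2\backslash G$ by $G/K_2$. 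The key structural fact making this work is that $K_2$ being compact and open forces the $\rho$-representation of $K_2$ and the $L^\infty(G)$-representation to decompose $\cH$ into pieces indexed by double cosets, with the "$K_2$-fixed vectors" $\cH_{K_2}$ playing the role of a generating subspace; this is also where the claimed equivalence "$\cH_{K_2}$ finite-dimensional $\iff{}_{K_1}\cH$ finite-dimensional" gets proved, via averaging over the two compact groups.

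Next I would go from $\cC_2$ to $\cC_3$ by an induction-type construction: from a $K_1$-$L^\infty(G/K_2)$-module $(\pi,\Pi)$ in $\cC_2$, form the Hilbert space of $L^2$-sections over $G/K_1$ with values in $\cH$ that are $K_1$-equivariant — concretely $\{f : G \recht \cH \mid f(gk_1) = \pi(k_1)^{-1} f(g)\}$, square-integrable modulo $K_1$ — equip it with the left translation $G$-action, the obvious $L^\infty(G/K_1)$-module structure by multiplication operators, and the $L^\infty(G/K_2)$-module structure pushed forward pointwise from $\Pi$. One checks this is a $G$-$L^\infty(G/K_1)$-$L^\infty(G/K_2)$-module and that $1_{eK_1}\cdot\cH$ recovers the original $\cH$, giving an inverse up to natural isomorphism; compatibility with the tensor products (Connes-type relative tensor product over $L^\infty(G/K_2)$ on the $\cC_3$ side versus $\ot_{K_2}$ on the $\cC_1$ side) is a direct computation using the fiber decomposition displayed in the excerpt. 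This simultaneously shows "$1_{eK_1}\cdot\cH$ finite-dimensional $\iff \cH\cdot 1_{eK_2}$ finite-dimensional" by symmetry of the construction in $K_1$ and $K_2$. Composing the two steps gives the natural equivalence of C$^*$-$2$-categories $\cC_1 \simeq \cC_3$, and restricting to finite-rank objects with $K_1 = K_2 = K$ yields the equivalence of the rigid C$^*$-tensor categories $\cC_{1,f}(K<G)$ and $\cC_{3,f}(K<G)$, together with rigidity (the conjugate of an object is built by swapping the two $L^\infty(G/K_i)$-structures and dualizing the representation, and finite rank guarantees solutions to the conjugate equations).

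For the Morita-independence claim, fix two compact open subgroups $K, K' < G$. The category $\cC_{1,f}(K<G)$ is by construction the $1$-cell category from the $0$-cell $K$ to itself inside the rigid C$^*$-$2$-category of finite-rank objects, and $\cC_{1,f}(K'<G)$ is the corresponding $1$-cell category at $K'$. The object $\cC_1(K,K')$ is a nonzero finite-rank $K$-$K'$-bimodule object (e.g. the piece of $L^\infty(G)$ with its two translation actions, or rather a finite-rank sub-object such as the span of the characteristic function of $KgK'$ for a single double coset, which is finite rank because $K,K'$ are compact open) that is "generating" in the appropriate weak Morita sense; its existence shows that the $0$-cells $K$ and $K'$ are connected in the $2$-category, and by the general principle that any two connected $0$-cells of a rigid C$^*$-$2$-category have weakly Morita equivalent endomorphism tensor categories (cf.\ \cite[Section 4]{Mu01}, \cite[Definition 7.3]{PSV15}, \cite[Section 3]{NY15b}), we conclude $\cC_f(K<G)$ and $\cC_f(K'<G)$ are Morita equivalent. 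I expect the main obstacle to be the bookkeeping in Step 1–2: keeping the $L^\infty$-module structures, the two or three group representations, and the equivariance conditions straight while verifying that the tensor products match on the nose (not merely up to isomorphism), and in particular checking that "finite rank" is transported correctly through the induction so that the finite-rank subcategories correspond — that is the technical heart, everything else being formal $2$-category nonsense.
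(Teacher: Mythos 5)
Your proposal follows essentially the same route as the paper: both pass through the intermediate category $\cC_2$ via the functors ``take right $K_2$-invariant vectors'' and ``induce over $K_1$'' (your space of $K_1$-equivariant $L^2$-sections is exactly the paper's $L^2(G)\ot_{K_1}\cH$), both transport the finite-rank condition through $\cC_2$, and both settle Morita independence by exhibiting a nonzero finite-rank $1$-cell between the $0$-cells $K$ and $K'$ (the paper uses $L^2(K_1K_2)$, i.e.\ your single-double-coset object at $g=e$). The argument is correct as outlined.
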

\begin{proof}
Using the left and right translation operators $\lambda_g$ and $\rho_g$ on $L^2(G)$, one checks that the following formulae define natural equivalences and their inverses between the categories $\cC_1$, $\cC_2$ and $\cC_3$.

\begin{itemlist}
\item $\cC_1 \recht \cC_2 : \cH \mapsto \cH_{K_2}$, where $\cH_{K_2}$ is the space of right $K_2$-invariant vectors and where the $K_1$-$L^\infty(G/K_2)$-module structure on $\cH_{K_2}$ is given by restricting the corresponding structure on $\cH$.

\item $\cC_2 \recht \cC_1 : \cH \mapsto \cH \ot_{L^\infty(G/K_2)} L^2(G)$ given by
\begin{align*}
& \{\xi \in \cH \ot L^2(G) \mid (1_{g K_2} \ot 1) \xi = (1 \ot 1_{gK_2})\xi \;\;\text{for all}\;\; g \in G \} \\
& \hspace{7cm} = \bigoplus_{g \in G/K_2} 1_{gK_2} \cdot \cH \ot L^2(g K_2)
\end{align*}
and where the $K_1$-$K_2$-$L^\infty(G)$-module structure is given by $(\lambda_\cH(k_1) \ot \lambda_{k_1})_{k_1 \in K_1}$, $(1 \ot \rho_{k_2})_{k_2 \in K_2}$ and multiplication with $1 \ot F$ when $F \in L^\infty(G)$.

\item $\cC_3 \recht \cC_2 : \cH \mapsto 1_{eK_1} \cdot \cH$ and where the $K_1$-$L^\infty(G/K_2)$-module structure on $1_{eK_1} \cdot \cH$ is given by restricting the corresponding structure on $\cH$.

\item $\cC_2 \recht \cC_3 : \cH \mapsto L^2(G) \ot_{K_1} \cH = \{ \xi \in L^2(G) \ot \cH \mid (\rho_{k_1} \ot \pi(k_1))\xi = \xi \;\;\text{for all}\;\; k_1 \in K_1\}$ and where the $G$-$L^\infty(G/K_1)$-$L^\infty(G/K_2)$-module structure is given by $(\lambda_g \ot 1)_{g \in G}$, multiplication with $F \ot 1$ for $F \in L^\infty(G/K_1)$ and multiplication with $(\id \ot \Pi)\Delta(F)$ for $F \in L^\infty(G/K_2)$.
\end{itemlist}

By definition, if $\cH \in \cC_1$ has finite rank, the Hilbert space $\cH_{K_2}$ is finite dimensional. Conversely, if $\cK \in \cC_2$ and $\cK$ is a finite dimensional Hilbert space, then the corresponding object $\cH \in \cC_1$ has the property that both $\ _{K_1}\cH$ and $\cH_{K_2}$ are finite dimensional. Therefore, $\cH \in \cC_1$ has finite rank if and only if $\ _{K_1}\cH$ is a finite dimensional Hilbert space. A similar reasoning holds for objects in $\cC_3$.

It is straightforward to check that the resulting equivalence $\cC_1 \leftrightarrow \cC_3$ preserves tensor products, so that we have indeed an equivalence between the C$^*$-$2$-categories $\cC_1$ and $\cC_3$.

To prove the final statement in the proposition, it suffices to observe that for all compact open subgroups $K_1,K_2 < G$, we have that $L^2(K_1 K_2)$ is a nonzero finite rank $K_1$-$K_2$-$L^\infty(G)$-module and that $L^2(G/(K_1 \cap K_2))$ is a nonzero finite rank $G$-$L^\infty(G/K_1)$-$L^\infty(G/K_2)$-module, so that $\cC_{i,f}(K_1 < G)$ and $\cC_{i,f}(K_2 < G)$ are Morita equivalent for $i=1,3$.
\end{proof}

The rigid C$^*$-$2$-categories $\cC_1$ and $\cC_2$ can as follows be fully faithfully embedded in the category of bimodules over the hyperfinite II$_1$ factor. We construct this embedding in an extremal way in the sense of subfactors (cf.\ Corollary \ref{cor.subfactor-realization}).

To do so, given a totally disconnected group $G$, we fix a continuous action $G \actson^\al P$ of $G$ on the hyperfinite II$_\infty$ factor $P$ that is \emph{strictly outer} in the sense of \cite[Definition 2.1]{Va03}: the relative commutant $P' \cap P \rtimes G$ equals $\C 1$. Moreover, we should choose this action in such a way that $\Tr \circ \al_g = \Delta(g)^{-1/2} \Tr$ for all $g \in G$ (where $\Delta$ is the modular function on $G$) and such that there exists a projection $p \in P$ of finite trace with the property that $\al_k(p)=p$ whenever $k$ belongs to a compact subgroup of $G$.
Such an action indeed exists: write $P = R_0 \ovt R_1$ where $R_0$ is a copy of the hyperfinite II$_1$ factor and $R_1$ is a copy of the hyperfinite II$_\infty$ factor. Choose a continuous trace scaling action $\R_0^+ \actson^{\al_1} R_1$. By \cite[Corollary 5.2]{Va03}, we can choose a strictly outer action $G \actson^{\al_0} R_0$. We then define $\al_g = (\al_0)_g \ot (\al_1)_{\Delta(g)^{-1/2}}$ and we take $p = 1 \ot p_1$, where $p_1 \in R_1$ is any projection of finite trace. Whenever $k$ belongs to a compact subgroup of $G$, we have $\Delta(k)=1$ and thus $\al_k(p) = p$.

Whenever $K_1,K_2 < G$ are compact open subgroups of $G$, we write
$$[K_1:K_2] = [K_1 : K_1 \cap K_2] \, [K_2 : K_1 \cap K_2]^{-1} \; .$$
Fixing a left Haar measure $\lambda$ on $G$, we have $[K_1:K_2] = \lambda(K_1) \, \lambda(K_2)^{-1}$. Therefore, we have that $[K : gKg^{-1}] = \Delta(g)$ for all compact open subgroups $K < G$ and all $g \in G$.

\begin{theorem}\label{thm.functor-to-bimod}
Let $G$ be a totally disconnected group and choose a strictly outer action $G \actson^\al P$ on the hyperfinite II$_\infty$ factor $P$ and a projection $p \in P$ as above. For every compact open subgroup $K < G$, write $R(K) = (pPp)^K$. Then each $R(K)$ is a copy of the hyperfinite II$_1$ factor.

To every $K_1$-$K_2$-$L^\infty(G)$-module $\cH$, we associate the Hilbert $R(K_1)$-$R(K_2)$-bimodule $\cK$ given by \eqref{eq.bim-bim} below. Then $\cH \mapsto \cK$ is a fully faithful $2$-functor. Also, $\cH$ has finite rank if and only if $\cK$ is a finite index bimodule. In that case,
$$\dim_{R(K_1)-}(\cK) = [K_1:K_2]^{1/2} \; \dim_{\cC_1}(\cH) \quad\text{and}\quad \dim_{-R(K_2)}(\cK) = [K_2:K_1]^{1/2} \; \dim_{\cC_1}(\cH) \; ,$$
where $\dim_{\cC_1}(\cH)$ is the categorical dimension of $\cH \in \cC_1$.
\end{theorem}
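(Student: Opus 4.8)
### Proof proposal

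The plan is to build the bimodule $\cK$ explicitly, verify functoriality by hand, and then compute the two dimensions by relating the categorical trace on $\cC_1$ to the coupling constants of the $R(K_i)$-bimodule, using the strict outerness of $\al$ to identify relative commutants. First I would pin down the formula \eqref{eq.bim-bim}: given a $K_1$-$K_2$-$L^\infty(G)$-module $\cH$, one forms $L^2(pPp)\ot_? \cH \ot_? L^2(pPp)$ — more precisely, one should tensor the $P$-$P$-bimodule underlying $L^2(P\rtimes G)$-type constructions against $\cH$ and then cut down by $p$ on both sides and take $K_1$- resp.\ $K_2$-invariants. The point is that $R(K_i) = (pPp)^{K_i}$ acts on the left/right and the $L^\infty(G)$-action on $\cH$ together with $\al$ glues everything into a genuine $R(K_1)$-$R(K_2)$-bimodule. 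I would first check $R(K)$ is the hyperfinite II$_1$ factor: $pPp$ is hyperfinite II$_1$, $K$ is compact so $\al|_K$ is an action of a compact group fixing $p$, hence $(pPp)^K$ is a finite-index (in fact the index is governed by $\Rep K$) subfactor of $pPp$, again hyperfinite and II$_1$; strict outerness of $\al$ on $P$ restricts to outerness of $\al|_K$ on $pPp$, which gives the factoriality.

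Next I would verify the $2$-functoriality: that $\cH \mapsto \cK$ sends the relative tensor product $\ot_{K_2}$ to the Connes tensor product $\ot_{R(K_2)}$, and is fully faithful on morphisms. The key algebraic input is that $L^2(pPp)$, viewed as an $R(K_2)$-$pPp$-bimodule, implements, together with the right translation/$L^\infty(G)$ data, an isometric identification
\begin{equation*}
(\cH \ot_{K_2} \cH')\,\rightsquigarrow\, \cK \ot_{R(K_2)} \cK' ,
\end{equation*}
which comes down to the computation that the $K_2$-invariant part of $\cH\ot\cH'$ matches the Connes fusion over $(pPp)^{K_2}$ once one has identified $\cH$ with the $pPp$-$pPp$-bimodule $1_{\text{cutdown}}\cdot(L^2 \text{-completion})$. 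Full faithfulness on morphisms is then the statement that a bounded operator commutes with the bimodule structure on $\cK$ iff it came from a morphism in $\cC_1$; here strict outerness is used again, in the form $P'\cap (P\rtimes G)=\C 1$, to kill any ``extra'' intertwiners created by the tensoring-up procedure.

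For the dimension formula I would proceed through the trace. Recall $\dim_{R(K_1)-}(\cK)$ and $\dim_{-R(K_2)}(\cK)$ are the square roots of the Jones indices associated to the left and right actions, equivalently are read off from the two coupling constants, and their product is the usual index $[\cdot]$ of an associated subfactor; what needs proving is the individual normalization. Using that the tensor unit $\cC_1(K)$ corresponds to $L^2(K)$ (equivalently, $L^2(pPp)$ over $R(K)$), one checks $\dim_{R(K)-}(L^2(pPp)) = \dim_{-R(K)}(L^2(pPp)) = [pPp:R(K)]^{1/2}$, and by the multiplicativity of the categorical dimension and of the bimodule dimension under $\ot$ it suffices to verify the claimed formula on a generating family of objects — say the $L^2(K_1 g K_2)$ for $g\in G$, whose categorical dimension in $\cC_1$ is computable directly from a standard solution of the conjugate equations (giving $[K_1 : K_1\cap gK_2g^{-1}]^{1/2}[K_2 : K_1\cap g^{-1}K_2 g]^{1/2}$ up to the identification above), while the corresponding $R(K_1)$-$R(K_2)$-bimodule dimension is computed from the trace of the relevant Jones projection in $pPp\rtimes G$. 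The asymmetry factor $[K_1:K_2]^{\pm 1/2}$ then emerges because $\Tr\circ\al_g = \Delta(g)^{-1/2}\Tr$ and $\Delta(g) = [K:gKg^{-1}]$, so the left and right traces on $\cK$ differ by exactly this modular factor; matching both sides on generators and extending by multiplicativity finishes it.

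The main obstacle will be the functoriality step, specifically pinning down the precise completion/cutdown in \eqref{eq.bim-bim} so that the identification $\cH \ot_{K_2}\cH' \cong \cK\ot_{R(K_2)}\cK'$ holds isometrically and naturally — the Connes tensor product over the II$_1$ factor $R(K_2)$ has to be reconciled with the finite-dimensional ``$K_2$-invariants'' fusion in $\cC_1$, and getting the bounded/square-integrability bookkeeping right (rather than the abstract reason it works, which is strict outerness) is where the real work lies. The dimension computation, by contrast, is essentially forced once functoriality and the generator computation are in place.
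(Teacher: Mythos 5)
Your overall architecture matches the paper's: the bimodule is indeed the corner $e_{K_1}\cdot(\cH\ot L^2(P))\cdot e_{K_2}$ with $e_K = p\,p_K$, and full faithfulness is indeed reduced, via strict outerness, to the relative commutant computation $\End_{P\text{-}P}(\cH\ot L^2(P)) = \Pi(L^\infty(G))'\ot 1$ (using $P'\cap(P\rtimes G)=\C 1$ and $P\cap L(G)=\C 1$). But there are three concrete problems. First, your claim that $(pPp)^K\subset pPp$ is a \emph{finite-index} subfactor is false whenever $K$ is infinite (e.g.\ $K=\SL(k,\cO)$); the fixed point algebra of an infinite compact group has infinite index. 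The conclusion that $R(K)$ is hyperfinite II$_1$ is still true, but the paper gets it differently: one identifies $R(K)\cong e_K(P\rtimes K)e_K$ via $a\mapsto a p_K$, notes $P'\cap(P\rtimes K)=\C 1$ so $P\rtimes K$ is a (hyperfinite, since $K$ is amenable) II$_\infty$ factor, and cuts by the finite-trace projection $e_K$. Your ``outerness of $\al|_K$ gives factoriality'' is not by itself a proof for compact group fixed points; the route through the crossed product is what makes it work.

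Second, your dimension argument is underdeveloped at exactly the point where the real content lies. The paper never computes $\dim_{\cC_1}(\cH)$ from conjugate equations on generators. Instead it computes $\dim_{-R(K_2)}(\cK)$ and $\dim_{R(K_1)-}(\cK)$ explicitly as sums over double cosets $K_1\backslash G/K_2$, the key tool being Wassermann's theorem: a unitary $V_g\in B(\cL_g)\ovt p_gPp_g$ with $(\id\ot\al_r)(V_g)=V_g(\rho(r)\ot 1)$ untwists the right $(P\rtimes K_2)$-module structure on each piece $\cL_g\ot L^2(p_gP)$, producing the factor $\Tr(p_g)/\Tr(p)=\Delta(g)^{1/2}$. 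From these two sums one reads off the identity $\dim_{-R(K_2)}(\cK)=[K_2:K_1]\,\dim_{R(K_1)-}(\cK)$ for \emph{all} objects, hence (by full faithfulness) for all subbimodules, and then the categorical dimension of $\cK$ --- the geometric mean of the two --- is transported back to $\cC_1$ by full faithfulness. Your alternative (compute $\dim_{\cC_1}$ on the generators $L^2(K_1gK_2)$ and ``extend by multiplicativity'') could in principle work, but the generator computation is only asserted (and your stated value $[K_2:K_1\cap g^{-1}K_2g]^{1/2}$ does not even typecheck as an index in $K_2$), and the extension step should be by additivity over irreducible subobjects, which requires knowing the normalized bimodule dimension is additive and agrees on every irreducible, not just on the generators themselves. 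Third, you do not address the converse implication that a finite-index $\cK$ forces $\cH$ to have finite rank; the paper needs the quantitative lower bound $\kappa(g)\geq[K_2:K_1]^{1/2}$ for every nonzero double-coset contribution to rule out infinitely many nonzero (or infinite-dimensional) pieces, and nothing in your sketch supplies this.
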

\begin{proof}
Given a $K_1$-$K_2$-$L^\infty(G)$-module $\cH$, turn $\cH \ot L^2(P)$ into a Hilbert $(P \rtimes K_1)$-$(P \rtimes K_2)$-bimodule via
\begin{align*}
& u_k \cdot (\xi \ot b) \cdot u_r = \lambda(k) \rho(r)^* \xi \ot \al_r^{-1}(b) & &\text{for all}\;\; k \in K_1 , r \in K_2 , \xi \in \cH , b \in L^2(P) \; ,\\
& a \cdot \zeta \cdot d = (\Pi \ot \id)\al(a) \, \zeta \, (1 \ot d) & &\text{for all}\;\; a,d \in P, \zeta \in \cH \ot L^2(P) \; ,
\end{align*}
where $\al : P \recht L^\infty(G) \ovt P$ is given by $(\al(a))(g) = \al_g^{-1}(a)$.

Whenever $K < G$ is a compact open subgroup, we define the projection $p_K \in L(G)$ given by
$$p_K = \lambda(K)^{-1} \int_K \lambda_k \, dk \; .$$
We also write $e_K = p p_K$ viewed as a projection in $P \rtimes K$. Since $P \subset P \rtimes K \subset P \rtimes G$, we have that $P' \cap (P \rtimes K) = \C 1$, so that $P \rtimes K$ is a factor. So, $P \rtimes K$ is a copy of the hyperfinite II$_\infty$ factor and $e_K \in P \rtimes K$ is a projection of finite trace. We identify $R(K) = e_K (P \rtimes K) e_K$ through the bijective $*$-isomorphism $(pPp)^K \recht e_K (P \rtimes K) e_K : a \mapsto a p_K$. In particular, $R(K)$ is a copy of the hyperfinite II$_1$ factor.

So, for every $K_1$-$K_2$-$L^\infty(G)$-module $\cH$, we can define the $R(K_1)$-$R(K_2)$-bimodule
\begin{equation}\label{eq.bim-bim}
\cK = e_{K_1} \cdot (\cH \ot L^2(P)) \cdot e_{K_2} \; .
\end{equation}

We claim that $\End_{R(K_1)-R(K_2)}(\cK) = \End_{\cC_1}(\cH)$ naturally. More concretely, we have to prove that
\begin{equation}\label{eq.to-prove}
\End_{(P \rtimes K_1)-(P \rtimes K_2)}(\cH \ot L^2(P)) = \End_{\cC_1}(\cH) \ot 1 \; ,
\end{equation}
where $\End_{\cC_1}(\cH)$ consists of all bounded operators on $\cH$ that commute with $\lambda(K_1)$, $\rho(K_2)$ and $\Pi(L^\infty(G))$. To prove \eqref{eq.to-prove}, it is sufficient to show that
\begin{equation}\label{eq.suff}
\End_{P-P}(\cH \ot L^2(P)) = \Pi(L^\infty(G))' \ot 1 \; .
\end{equation}
Note that the left hand side of \eqref{eq.suff} equals $(\Pi \ot \id)\al(P)' \cap B(\cH) \ovt P$. Assume that $T \in (\Pi \ot \id)\al(P)' \cap B(\cH) \ovt P$.
In the same was as in \cite[Proposition 2.7]{Va03}, it follows that $T \in \Pi(L^\infty(G))' \cap 1$. For completeness, we provide a detailed argument. Define the unitary $W \in L^\infty(G) \ovt L(G)$ given by $W(g) = \lambda_g$. We view both $T$ and $(\Pi \ot \id)(W)$ as elements in $B(\cH) \ovt (P \rtimes G)$. For all $a \in P$, we have
\begin{align*}
(\Pi \ot \id)(W) \, T \, (\Pi \ot \id)(W)^* \, (1 \ot a) &= (\Pi \ot \id)(W) \, T \, (\Pi \ot \id)\al(a) \, (\Pi \ot \id)(W)^* \\ &= (1 \ot a) \, (\Pi \ot \id)(W) \, T \, (\Pi \ot \id)(W)^* \; .
\end{align*}
Since the action $\al$ is strictly outer, we conclude that $(\Pi \ot \id)(W) \, T \, (\Pi \ot \id)(W)^* = S \ot 1$ for some $S \in B(\cH)$. So,
$$T = (\Pi \ot \id)(W)^* \, (S \ot 1) \, (\Pi \ot \id)(W) \; .$$
The left hand side belongs to $B(\cH) \ovt P$, while the right hand side belongs to $B(\cH) \ot L(G)$, and both are viewed inside $B(\cH) \ovt (P \rtimes G)$. Since $P \cap L(G) = \C 1$, we conclude that $T = T_0 \ot 1$ for some $T_0 \in B(\cH)$ and that
$$T_0 \ot 1 = (\Pi \ot \id)(W)^* \, (S \ot 1) \, (\Pi \ot \id)(W) \; .$$
Defining the normal $*$-homomorphism $\Psi : L(G) \recht L(G) \ovt L(G)$ given by $\Psi(\lambda_g) = \lambda_g \ot \lambda_g$ for all $g \in G$, we apply $\id \ot \Psi$ and conclude that
\begin{align*}
T_0 \ot 1 \ot 1 &= (\Pi \ot \id)(W)_{13}^* \, (\Pi \ot \id)(W)_{12}^* \, (S \ot 1) \, (\Pi \ot \id)(W)_{12} \, (\Pi \ot \id)(W)_{13} \\ &= (\Pi \ot \id)(W)_{13}^* \, (T_0 \ot 1 \ot 1) \, (\Pi \ot \id)(W)_{13} \; .
\end{align*}
It follows that $T_0$ commutes with $\Pi(L^\infty(G))$ and \eqref{eq.to-prove} is proven.

It is easy to check that $\cH \mapsto \cK$ naturally preserves tensor products. So, we have found a fully faithful $2$-functor from $\cC_1$ to the C$^*$-$2$-category of Hilbert bimodules over hyperfinite II$_1$ factors.

To compute $\dim_{-R(K_2)}(\cK)$, observe that for all $k \in K_1$, $r \in K_2$ and $g \in G$, we have $\al_{k g r}(p) = \al_{kg}(p) = \al_g(\al_{g^{-1} k g}(p)) = \al_g(p)$. Therefore, as a right $(P \rtimes K_2)$-module, we have
$$e_{K_1} \cdot (\cH \ot L^2(P)) \cong \bigoplus_{g \in K_1 \backslash G / K_2} \bigl(\cL_g \ot L^2(p_g P)\bigr) \; ,$$
where $p_g = \al_g^{-1}(p)$, where the Hilbert space $\cL_g := \Pi(1_{K_1 g K_2})(\ _{K_1} \cH )$ comes with the unitary representation $(\rho(r))_{r \in K_2}$ and where the right $(P \rtimes K_2)$-module structure on $\cL_g \ot L^2(p_g P)$ is given by
$$(\xi \ot b) \cdot (d u_r) = \rho(r)^* \xi \ot \al_r^{-1}(bd) \quad\text{for all}\;\; \xi \in \cL_g , b \in L^2(p_g P), d \in P, r \in K_2 \; .$$

Since $p_g P p_g \rtimes K_2 = p_g(P \rtimes K_2)p_g$ is a factor (actually, $K_2 \actson p_g P p_g$ is a so-called minimal action), it follows from \cite[Theorem 12]{Wa88} that there exists a unitary $V_g \in B(\cL_g) \ovt p_g P p_g$ satisfying
$$(\id \ot \al_r)(V_g) = V_g (\rho(r) \ot 1) \quad \text{for all}\;\; r \in K_2 \; .$$
Then left multiplication with $V_g$ intertwines the right $(P \rtimes K_2)$-module structure on the Hilbert space $\cL_g \ot L^2(p_g P)$ with the right $(P \rtimes K_2)$-module structure given by
$$(\xi \ot b) \cdot (d u_r) = \xi \ot \al_r^{-1}(bd) \quad\text{for all}\;\; \xi \in \cL_g , b \in L^2(p_g P), d \in P, r \in K_2 \; .$$
Therefore,
\begin{align*}
\dim_{-R(K_2)} \bigl(\cL_g \ot L^2(p_g P)\bigr) \cdot e_{K_2} & = \dim(\cL_g) \, \dim_{-(pPp)^{K_2}} \bigl(L^2(p_g P^{K_2} p)\bigr) \\ &= \dim(\cL_g) \, \frac{\Tr(p_g)}{\Tr(p)} = \dim(\cL_g) \, \Delta(g)^{1/2} \; .
\end{align*}
So, we have proved that
$$\dim_{-R(K_2)} (\cK) = \sum_{g \in K_1 \backslash G / K_2} \dim\bigl( \Pi(1_{K_1 g K_2})(\ _{K_1} \cH ) \bigr) \; \Delta(g)^{1/2} \; .$$
We similarly get that
$$\dim_{R(K_1)-} (\cK) = \sum_{g \in K_1 \backslash G / K_2} \dim\bigl( \Pi(1_{K_1 g K_2})(\cH_{K_2} ) \bigr) \; \Delta(g)^{-1/2} \; .$$

To make the connection with the categorical dimension of $\cH$, it is useful to view $\cH$ as the image of a $G$-$L^\infty(G/K_1)$-$L^\infty(G/K_2)$-module $\cH'$ under the equivalence of Proposition \ref{prop.some-equivalences-of-categories}. This means that we can view $\cH$ as the space of $L^2$-functions $\xi : G \recht \cH'$ with the property that $\xi(g) \in 1_{eK_1} \cdot \cH' \cdot 1_{g K_2}$ for a.e.\ $g \in G$. The $L^\infty(G)$-module structure of $\cH$ is given by pointwise multiplication, while the $K_1$-$K_2$-module structure on $\cH$ is given by
$$(k \cdot \xi \cdot r)(g) = \pi(k) \xi(k^{-1} g r^{-1}) \quad\text{for all}\;\; k \in K_1, r \in K_2, g \in G \; .$$
With this picture, it is easy to see that
$$\Pi(1_{K_1 g K_2})(\cH_{K_2}) \cong 1_{eK_1} \cdot \cH' \cdot 1_{K_1 g K_2} \; .$$
The map $\xi \mapsto \xitil$ with $\xitil(g) = \pi(g)^* \xi(g)$ is an isomorphism between $\cH$ and the space of $L^2$-functions $\eta : G \recht \cH'$ with the property that $\eta(g) \in 1_{g^{-1}K_1} \cdot \cH' \cdot 1_{eK_2}$ for a.e.\ $g \in G$. The $L^\infty(G)$-module structure is still given by pointwise multiplication, while the $K_1$-$K_2$-module structure is now given by
$$(k \cdot \eta \cdot r)(g) = \pi(r)^* \eta(k^{-1} g r^{-1}) \; .$$
In this way, we get that
$$\Pi(1_{K_1 g K_2})(\ _{K_1} \cH ) \cong 1_{K_2 g^{-1} K_1} \cdot \cH' \cdot 1_{eK_2} \; .$$
It thus follows that
\begin{align}
\dim_{-R(K_2)}(\cK) &= \sum_{g \in K_1 \backslash G / K_2} \dim(1_{K_2 g^{-1} K_1} \cdot \cH' \cdot 1_{eK_2}) \, \Delta(g)^{1/2} \quad\text{and}\label{eq.dim-r-cK}\\
\dim_{R(K_1)-}(\cK) &= \sum_{g \in K_1 \backslash G / K_2} \dim(1_{eK_1} \cdot \cH' \cdot 1_{K_1 g K_2}) \, \Delta(g)^{-1/2} \; .\label{eq.dim-l-cK}
\end{align}

Also note that for every $g \in G$, we have
\begin{align*}
\dim(1_{K_2 g^{-1} K_1} \cdot \cH' \cdot 1_{eK_2}) &= [K_2 : K_2 \cap g^{-1} K_1 g] \; \dim(1_{g^{-1}K_1} \cdot \cH' \cdot 1_{eK_2}) \\
&= [K_2 : K_2 \cap g^{-1} K_1 g] \; \dim(1_{eK_1} \cdot \cH' \cdot 1_{gK_2}) \\
&= \frac{[K_2 : K_2 \cap g^{-1} K_1 g]}{[K_1 : K_1 \cap g K_2 g^{-1}]} \; \dim(1_{eK_1} \cdot \cH' \cdot 1_{K_1 gK_2}) \\
&= [K_2 : K_1] \; \Delta(g)^{-1} \; \dim(1_{eK_1} \cdot \cH' \cdot 1_{K_1 gK_2}) \; .
\end{align*}
It follows that
\begin{align*}
\dim_{-R(K_2)}(\cK) &= [K_2 : K_1] \; \sum_{g \in K_1 \backslash G / K_2} \dim(1_{eK_1} \cdot \cH' \cdot 1_{K_1 g K_2}) \, \Delta(g)^{-1/2} \\ &= [K_2 : K_1] \; \dim_{R(K_1)-}(\cK) \; .
\end{align*}

If $\cH$ has finite rank, also $\cH'$ has finite rank so that $\cH' \cdot 1_{eK_2}$ and $1_{eK_1} \cdot \cH'$ are finite dimensional Hilbert spaces. It then follows that $\cK$ is a finite index bimodule.

Conversely, assume that $\cK$ has finite index. For every $g \in G$, write
$$\kappa(g) := \dim(1_{K_2 g^{-1} K_1} \cdot \cH' \cdot 1_{eK_2}) \, \Delta(g)^{1/2} = [K_2 : K_1] \; \dim(1_{eK_1} \cdot \cH' \cdot 1_{K_1 g K_2}) \, \Delta(g)^{-1/2} \; .$$
So,
$$\kappa(g)^2 = [K_2 : K_1] \, \dim(1_{K_2 g^{-1} K_1} \cdot \cH' \cdot 1_{eK_2}) \, \dim(1_{eK_1} \cdot \cH' \cdot 1_{K_1 g K_2}) \; .$$
Thus, whenever $\kappa(g) \neq 0$, we have that $\kappa(g) \geq [K_2:K_1]^{1/2}$. Since
$$\dim_{-R(K_2)}(\cK) = \sum_{g \in K_1 \backslash G / K_2} \kappa(g) \; ,$$
we conclude that there are only finitely many $g \in K_1 \backslash G / K_2$ for which $1_{K_2 g^{-1} K_1} \cdot \cH' \cdot 1_{eK_2}$ is nonzero and for each of them, it is a finite dimensional Hilbert space. This implies that $\cH' \cdot 1_{eK_2}$ is finite dimensional, so that $\cH'$ has finite rank.

We have proved that $\cH \mapsto \cK$ is a fully faithful $2$-functor from $\cC_{1,f}$ to the finite index bimodules over hyperfinite II$_1$ factors. Moreover, for given compact open subgroups $K_1,K_2 < G$, the ratio between $\dim_{R(K_1)-}(\cK)$ and $\dim_{-R(K_2)}(\cK)$ equals $[K_1:K_2]$ for all finite rank $K_1$-$K_2$-$L^\infty(G)$-modules $\cH$. Since the functor is fully faithful, this then also holds for all $R(K_1)$-$R(K_2)$-subbimodules of $\cK$. It follows that the categorical dimension of $\cK$ equals
$$[K_2 : K_1]^{1/2} \, \dim_{R(K_1)-}(\cK) = [K_1:K_2]^{1/2} \, \dim_{-R(K_2)}(\cK) \; .$$
Since the functor is fully faithful, the categorical dimensions of $\cH \in \cC_{1,f}$ and $\cK \in \Bimod_f$ coincide, so that
\begin{equation}\label{eq.cat-dim-cK}
[K_2 : K_1]^{1/2} \, \dim_{R(K_1)-}(\cK) = \dim_{\cC_1}(\cH) = [K_1:K_2]^{1/2} \, \dim_{-R(K_2)}(\cK) \; .
\end{equation}
\end{proof}

\begin{corollary}\label{cor.subfactor-realization}
Let $G$ be a totally disconnected group with compact open subgroups $K_\pm < G$ and assume that $\cH$ is a finite rank $G$-$L^\infty(G/K_+)$-$L^\infty(G/K_-)$-module. Denote by $\cC = (\cC_{++},\cC_{+-},\cC_{-+},\cC_{--})$ the C$^*$-$2$-category of $G$-$L^\infty(G/K_\pm)$-$L^\infty(G/K_\pm)$-modules (with $0$-cells $K_+$ and $K_-$) generated by the alternating tensor products of $\cH$ and its adjoint.

Combining Proposition \ref{prop.some-equivalences-of-categories} and Theorem \ref{thm.functor-to-bimod}, we find an extremal hyperfinite subfactor $N \subset M$ whose standard invariant, viewed as the C$^*$-$2$-category of $N$-$N$, $N$-$M$, $M$-$N$ and $M$-$M$-bimodules generated by the $N$-$M$-bimodule $L^2(M)$, is equivalent with $(\cC,\cH)$ (cf.\ Remark \ref{rem.2-category-subfactor}).
\end{corollary}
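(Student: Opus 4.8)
The plan is to assemble the two preceding results with the reconstruction of a subfactor from a standard $\lambda$-lattice recalled in Remark \ref{rem.2-category-subfactor}. First I would move $\cH$ into the setting of $\cC_1$: by Proposition \ref{prop.some-equivalences-of-categories} the finite rank $G$-$L^\infty(G/K_+)$-$L^\infty(G/K_-)$-module $\cH$ corresponds to a finite rank $K_+$-$K_-$-$L^\infty(G)$-module, and the whole C$^*$-$2$-category $\cC$ is carried, fully faithfully and tensor-preservingly, onto the sub-$2$-category of $\cC_1$ (with $0$-cells $K_+,K_-$) generated by the alternating tensor powers of that module and its adjoint. Applying Theorem \ref{thm.functor-to-bimod} with the two $0$-cells $K_+$ and $K_-$ then gives a fully faithful, tensor-preserving $2$-functor from this $2$-category into the rigid C$^*$-$2$-category of finite index bimodules over the hyperfinite II$_1$ factors $R(K_+)$ and $R(K_-)$, sending $\cH$ to a finite index $R(K_+)$-$R(K_-)$-bimodule $\cK$. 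In particular the tensor units of $\cC_{++}$ and $\cC_{--}$ are sent to $L^2(R(K_+))$ and $L^2(R(K_-))$, which are irreducible because $R(K_\pm)$ are factors; by full faithfulness the tensor units of $\cC_{++}$ and $\cC_{--}$ are themselves irreducible.

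At this point $(\cC,\cH)$ is a rigid C$^*$-$2$-category with two $0$-cells, irreducible tensor units, and a distinguished generating object $\cH\in\cC_{+-}$, so by Remark \ref{rem.2-category-subfactor} it is exactly the data of a standard $\lambda$-lattice $\cG$, and by \cite[Theorem 3.1]{Po94b} there is an extremal subfactor $N\subset M$ whose standard invariant is $\cG$; unwinding the correspondence, the C$^*$-$2$-category of $N$-$N$, $N$-$M$, $M$-$N$ and $M$-$M$-bimodules generated by $L^2(M)$ is equivalent to $(\cC,\cH)$. It remains to arrange $N$ and $M$ hyperfinite, which is where the concrete bimodule realization of Theorem \ref{thm.functor-to-bimod} is used rather than merely the abstract existence statement. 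Concretely I would build the subfactor inside the hyperfinite world: form the Q-system $Q=\cHbar\ot\cH\in\cC_{--}$ with its (co)multiplication and counit coming from the standard solutions of the conjugate equations, transport it along the embedding to a Q-system in $\Bimod_{R(K_-)\text{-}R(K_-)}$, and let $N\subset M:=R(K_-)$ be the finite index subfactor it defines via Longo's correspondence between Q-systems and finite index subfactors. Alternatively, one can observe that Popa's reconstruction realizes $M$ and $N$ as the trace completions of the increasing unions of the finite dimensional algebras $A_{0n}=\End(\cH_{+,n})$ and $A_{1n}$ of Remark \ref{rem.2-category-subfactor}, which are therefore approximately finite dimensional. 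Either way $M$ is hyperfinite and $N\subset M$ has finite index, so $N$ is hyperfinite as well.

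The point to be careful about is matching the standard invariant of the reconstructed subfactor with $(\cC,\cH)$ on the nose — that is, checking that the generating bimodule $L^2(M)$ corresponds to $\cH$ itself, not merely that the two $2$-categories are abstractly equivalent — together with the orientation bookkeeping (which $0$-cell plays the role of $N$ and which of $M$, the placement of adjoints, and the left/right categorical dimensions computed in Theorem \ref{thm.functor-to-bimod}). All of this is routine given the dictionary of Remark \ref{rem.2-category-subfactor} and the explicit formulas \eqref{eq.dim-r-cK}--\eqref{eq.cat-dim-cK}; the only substantive input beyond Proposition \ref{prop.some-equivalences-of-categories} and Theorem \ref{thm.functor-to-bimod} is the classical reconstruction of a subfactor from a standard $\lambda$-lattice, and the one genuinely new feature, hyperfiniteness, is forced by the concrete realization over $R(K_\pm)$.
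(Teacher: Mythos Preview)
Your overall strategy is sound, but the paper's argument is much more direct and avoids both Popa's abstract reconstruction theorem and Q-systems entirely. Having already produced the finite index $R(K_+)$-$R(K_-)$-bimodule $\cK$ from Theorem \ref{thm.functor-to-bimod}, the paper simply chooses projections $p_\pm\in R(K_\pm)$ so that, writing $N=p_+R(K_+)p_+$ and $M=p_-R(K_-)p_-$, one has $\dim_{-M}(p_+\cdot\cK\cdot p_-)=1$. Then $p_+\cdot\cK\cdot p_-\cong L^2(M)$ as $N$-$M$-bimodules, which realizes $N\subset M$ directly; hyperfiniteness and extremality come for free because $N$ and $M$ are corners of $R(K_\pm)$ and because of the dimension formula \eqref{eq.cat-dim-cK}. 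Full faithfulness of the $2$-functor then identifies the bimodule $2$-category generated by $L^2(M)$ with $(\cC,\cH)$. No appeal to \cite[Theorem 3.1]{Po94b} is needed.

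Your detour through Popa's reconstruction is not wrong in spirit, but one of your two suggested routes to hyperfiniteness contains a genuine error: Popa's construction in \cite{Po94b} does \emph{not} realize $N$ and $M$ as trace completions of the towers $\bigcup_n A_{1n}$ and $\bigcup_n A_{0n}$. Those unions describe the Jones tower starting from a given subfactor, not the reconstruction in the reverse direction; Popa's construction involves amalgamated free products and in general produces non-hyperfinite (indeed, typically free-group-like) factors when the standard invariant is nonamenable. Your Q-system alternative can be made to work, but you would still have to verify that the resulting subfactor has exactly $(\cC,\cH)$ as its standard invariant, which is precisely the bookkeeping you flag as ``routine'' --- and at that point the projection-cutting trick is both shorter and more transparent.
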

\begin{proof}
A combination of Proposition \ref{prop.some-equivalences-of-categories} and Theorem \ref{thm.functor-to-bimod} provides the finite index $R(K_+)$-$R(K_-)$-bimodule $\cK$ associated with $\cH$. Take nonzero projections $p_\pm \in R(K_\pm)$ such that writing $N = p_+ R(K_+) p_+$ and $M = p_- R(K_-) p_-$, we have that $\dim_{-M}(p_+ \cdot \cK \cdot p_-) = 1$. We can then view $N \subset M$ in such a way that $L^2(M) \cong p_+ \cdot \cK \cdot p_-$ as $N$-$M$-bimodules. The C$^*$-$2$-category of $N$-$N$, $N$-$M$, $M$-$N$ and $M$-$M$-bimodules generated by the $N$-$M$-bimodule $L^2(M)$ is by construction equivalent with the rigid C$^*$-$2$-category of $R(K_\pm)$-$R(K_\pm)$-bimodules generated by $\cK$. Since the $2$-functor in Theorem \ref{thm.functor-to-bimod} is fully faithful, this C$^*$-$2$-category is equivalent with $\cC$ and this equivalence maps the $N$-$M$-bimodule $L^2(M)$ to $\cH \in \cC_{+-}$.
\end{proof}

From Corollary \ref{cor.subfactor-realization}, we get the following result.

\begin{proposition}\label{prop.link-to-jones-burstein}
Let $\cP$ be the subfactor planar algebra of \cite{Jo98,Bu10} associated with a connected locally finite bipartite graph $\cG$, with edge set $\cE$ and source and target maps $s : \cE \recht V_+$, $t : \cE \recht V_-$, together with\footnote{Note that in \cite{Bu10}, also a weight function $\mu : V_+ \sqcup V_- \recht \R_0^+$ scaled by the action of $G$ is part of the construction. But only when we take $\mu$ to be a multiple of the function $v \mapsto [\Stab v: \Stab v_+]^{1/2}$, we actually obtain a subfactor planar algebra, contrary to what is claimed in \cite[Proposition 4.1]{Bu10}.} a closed subgroup $G < \Aut(\cG)$ acting transitively on $V_+$ as well as on $V_-$. Fix vertices $v_\pm \in V_\pm$ and write $K_\pm = \Stab v_\pm$.

There exists an extremal hyperfinite subfactor $N \subset M$ whose standard invariant is isomorphic with $\cP$. We have $[M:N] = \delta^2$ where
\begin{align*}
\delta &= \sum_{w \in V_-} \# \{ e \in \cE \mid s(e) = v_+ , t(e) = w\} \, [\Stab w : \Stab v_+]^{1/2} \\ &= \sum_{w \in V_+} \# \{e \in \cE \mid s(e) = w , t(e) = v_-\} \, [\Stab w : \Stab v_-]^{1/2} \; .
\end{align*}
Moreover, $\cP$ can be described as the rigid C$^*$-$2$-category $\cC_{3,f}(G,K_\pm,K_\pm)$ of all finite rank $G$-$L^\infty(G/K_{\pm})$-$L^\infty(G/K_{\pm})$-modules together with the generating object $\ell^2(\cE) \in \cC_{3,f}(G,K_+,K_-)$ (cf.\ Remark \ref{rem.2-category-subfactor}).
\end{proposition}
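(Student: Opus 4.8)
The plan is to realize $\cP$ as the standard invariant of a subfactor obtained from Corollary~\ref{cor.subfactor-realization}, applied to a carefully chosen generating object. First I would take the generating $G$-$L^\infty(G/K_+)$-$L^\infty(G/K_-)$-module $\cH := \ell^2(\cE)$, where $G$ acts on $\cE$ by automorphisms of $\cG$, the left $L^\infty(G/K_+)$-module structure is multiplication by $1_{s^{-1}(gK_+)}$ (using that $s$ is $G$-equivariant and $V_+ \cong G/K_+$ since $G$ acts transitively with $\Stab v_+ = K_+$), and the right $L^\infty(G/K_-)$-module structure is multiplication by $1_{t^{-1}(gK_-)}$. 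I would check that $\cH$ has finite rank: the space $1_{eK_+}\cdot \cH$ is the span of edges $e$ with $s(e)=v_+$, which is finite since $\cG$ is locally finite. Then Corollary~\ref{cor.subfactor-realization} already produces an extremal hyperfinite subfactor $N \subset M$ whose standard invariant, as a C$^*$-$2$-category with generating object, is $(\cC_{3,f}(G,K_\pm,K_\pm), \ell^2(\cE))$. What remains is to identify this with the planar algebra $\cP$ of Jones--Burstein, and to compute the index.

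The identification with $\cP$ is the conceptual heart. I would appeal to Remark~\ref{rem.2-category-subfactor}, which says that a rigid C$^*$-$2$-category with two $0$-cells, irreducible units in the corner categories, and a given generating $1$-cell is ``the same'' as a subfactor planar algebra. So it suffices to check that the planar algebra extracted from $(\cC_{3,f}, \ell^2(\cE))$ via the recipe in that remark --- namely $A_{0n} = \End(\cH_{+,n})$ with the alternating tensor powers of $\ell^2(\cE)$ and $\overline{\ell^2(\cE)}$, with the Jones projections coming from the standard solutions of the conjugate equations, and the categorical trace --- coincides with Burstein's construction in \cite{Bu10}. Concretely, the $n$-box space of $\cP$ is spanned by length-$n$ loops in $\cG$ based at $v_+$ (or $v_-$), modulo the $G$-action, and the $2$-category tensor product $\ot_{L^\infty(G/K_2)}$ from Section~\ref{sec.our-categories} unpacks exactly to the fiber-product description of paths in $\cG$: an element of $\ell^2(\cE) \ot_{L^\infty(G/K_-)} \overline{\ell^2(\cE)}$ sits in $\bigoplus_{gK_-} (\ell^2(\cE)\cdot 1_{gK_-}) \ot 1_{gK_-}\cdot \overline{\ell^2(\cE)}$, i.e.\ is spanned by composable pairs of edges meeting at a vertex of $V_-$. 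Iterating, $\End_{\cC_3}(\cH_{+,n})$ is the space of $G$-invariant operators on length-$n$ paths, which is precisely Burstein's planar algebra box space. I would also verify that the canonical projections $e_\pm$ of Remark~\ref{rem.2-category-subfactor} match the Jones projections in $\cP$ (both being, up to normalization, the projection onto the ``backtracking'' subspace), and that the normalized categorical trace equals the planar-algebra trace; here the correct normalization of the weight function $\mu(v) = [\Stab v : \Stab v_+]^{1/2}$ --- flagged in the footnote --- is exactly what makes the categorical and planar traces agree, since the categorical dimension formula \eqref{eq.cat-dim-cK}, rewritten through the equivalence of Proposition~\ref{prop.some-equivalences-of-categories}, introduces precisely these $\Delta(g)^{\pm 1/2} = [K:gKg^{-1}]^{\pm 1/2}$ factors.

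For the index, I would compute $d(\cH)^2$ where $\cH = \ell^2(\cE) \in \cC_{3,f}(G,K_+,K_-)$, using \eqref{eq.dim-r-cK}--\eqref{eq.cat-dim-cK}. Passing $\cH$ through the equivalence to a $\cC_1$-object and then to bimodules over $R(K_+), R(K_-)$, formula~\eqref{eq.cat-dim-cK} gives $\dim_{\cC}(\cH) = [K_- : K_+]^{1/2}\dim_{R(K_+)-}(\cK)$, and $\dim_{R(K_+)-}(\cK) = \sum_{g \in K_+\backslash G/K_-} \dim(1_{eK_+}\cdot \cH \cdot 1_{K_+ g K_-})\,\Delta(g)^{-1/2}$ by \eqref{eq.dim-l-cK}. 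Now $1_{eK_+}\cdot \ell^2(\cE)\cdot 1_{K_+ g K_-}$ has dimension equal to $\#\{e : s(e)=v_+,\ t(e) \in K_+ g K_-\cdot v_- \}$, and summing over the double coset, with the $\Delta$-factors reassembled via $[K_+ : gK_- g^{-1}]$-type identities, collapses to $\delta = \sum_{w \in V_-}\#\{e : s(e)=v_+, t(e)=w\}[\Stab w : \Stab v_+]^{1/2}$; the two displayed expressions for $\delta$ are then the left vs.\ right dimension, equal because $N \subset M$ is extremal. Hence $[M:N] = d(\cH)^2 = \delta^2$. The main obstacle I anticipate is not any single estimate but the bookkeeping in the $\cP$-identification: one must carefully match Burstein's planar-operadic description (loops, Temperley--Lieb action, the weight function) against the $2$-categorical tensor-product-and-$\End$ description, keeping all the $[\Stab \cdot : \Stab \cdot]^{1/2}$ normalizations straight --- precisely the point where \cite[Proposition~4.1]{Bu10} goes wrong, as the footnote warns.
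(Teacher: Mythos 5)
Your construction of $\cH = \ell^2(\cE)$, the appeal to Remark \ref{rem.2-category-subfactor} and Corollary \ref{cor.subfactor-realization}, the identification of the alternating tensor powers with path spaces, and the index computation via \eqref{eq.cat-dim-cK} combined with \eqref{eq.dim-r-cK}--\eqref{eq.dim-l-cK} all follow the paper's route, and the double-coset bookkeeping you sketch for $\delta$ is the correct one. (A minor gloss: the two displayed expressions for $\delta$ are both equal to $\dim_{\cC_3}(\cH)$ directly from \eqref{eq.cat-dim-cK}, so extremality does not need to be invoked separately at that point.)

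There is, however, one genuine gap. Corollary \ref{cor.subfactor-realization} produces a subfactor whose standard invariant is the C$^*$-$2$-category \emph{generated} by the alternating tensor powers of $\cH$ and $\cHbar$, not the full category of all finite rank modules; you silently conflate the two when you write that the corollary ``already produces'' a subfactor with standard invariant $(\cC_{3,f}(G,K_\pm,K_\pm),\ell^2(\cE))$. The ``Moreover'' clause of the proposition is exactly the assertion that the generated $2$-category equals $\cC_{3,f}(G,K_\pm,K_\pm)$, and this requires an argument --- indeed it is the only place where the hypothesis that $\cG$ is \emph{connected} enters, and your proposal never uses connectedness. The paper's argument is short but essential: since the $n$-fold alternating tensor powers are the path spaces $\ell^2(\cE_{\pm,n})$, one must show that every irreducible finite rank $G$-$L^\infty(G/K_+)$-$L^\infty(G/K_-)$-module (and similarly for the other three corners) occurs in one of them; after cutting by $1_{eK_+}\cdot(\,\cdot\,)\cdot 1_{eK_-}$ this reduces to showing that every irreducible representation of the compact group $K_+\cap K_-$ appears in $\ell^2(\text{paths from }v_+\text{ to }v_-)$, which holds because connectedness of $\cG$ makes the action of $K_+\cap K_-$ on this set of paths faithful. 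Without this step you have only shown that $\cP$ is the standard invariant of \emph{some} extremal hyperfinite subfactor with the stated index (the first two claims), not the final description of $\cP$ as the full module category.
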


\begin{proof}
We are given $G \actson \cE$ and $G \actson V_+$, $G \actson V_-$ such that the source and target maps $s,t$ are $G$-equivariant and such that $G$ acts transitively on $V_+$ and on $V_-$. Put $K_\pm = \Stab v_\pm$ and note that $K_\pm < G$ are compact open subgroups. We identify $G/K_\pm = V_\pm$ via the map $g K_\pm \mapsto g \cdot v_\pm$. In this way, $\cH := \ell^2(\cE)$ naturally becomes a finite rank $G$-$L^\infty(G/K_+)$-$L^\infty(G/K_-)$-module. Denote by $\cC$ the C$^*$-$2$-category of $G$-$L^\infty(G/K_\pm)$-$L^\infty(G/K_\pm)$-modules generated by the alternating tensor products of $\cH$ and its adjoint.

In the $2$-category $\cC_3$, the $n$-fold tensor product $\cH \ot \overline{\cH} \ot \cdots$ equals $\ell^2(\cE_{+,n})$, where $\cE_{+,n}$ is the set of paths in the graph $\cG$ starting at an even vertex and having length $n$. Similarly, the $n$-fold tensor product $\overline{\cH} \ot \cH \ot \cdots$ equals $\ell^2(\cE_{-,n})$, where $\cE_{-,n}$ is the set of paths of length $n$ starting at an odd vertex. So by construction, under the equivalence of Remark \ref{rem.2-category-subfactor}, $\cC$ together with its generator $\cH \in \cC_{+-}$ corresponds exactly to the planar algebra $\cP$ constructed in \cite{Bu10,Jo98}.

By Corollary \ref{cor.subfactor-realization}, we get that $(\cC,\cH)$ is the standard invariant of an extremal hyperfinite subfactor $N \subset M$. In particular, $[M:N] = \delta^2$ with $\delta = \dim_{\cC_3}(\cH)$. Combining \eqref{eq.cat-dim-cK} with \eqref{eq.dim-r-cK}, and using that
$$\Delta(g)^{-1/2} = [g K_+ g^{-1} : K_+]^{1/2} = [\Stab(g \cdot v_+) : K_+]^{1/2} \; ,$$
we get that
\begin{align*}
\delta &= [K_+ : K_-]^{1/2} \, \sum_{g \in G/K_+} \dim(1_{g K_+} \cdot \cH \cdot 1_{e K_-}) \, \Delta(g)^{-1/2} \\
&= \sum_{g \in G/K_+} \#\{e \in \cE \mid s(e) = g \cdot v_+ , t(e) = v_- \} \, [\Stab(g \cdot v_+) : K_+]^{1/2} \; [K_+ : K_-]^{1/2} \\
&= \sum_{w \in V_+} \# \{e \in \cE \mid s(e) = w , t(e) = v_- \} \, [\Stab w : \Stab v_-]^{1/2} \; .
\end{align*}
Combining \eqref{eq.cat-dim-cK} with \eqref{eq.dim-l-cK}, we similarly get that
$$\delta = \sum_{w \in V_-} \# \{e \in \cE \mid s(e) = v_+ , t(e) = w \} \, [\Stab w : \Stab v_+]^{1/2} \; .$$
To conclude the proof of the proposition, it remains to show that $\cC$ is equal to the C$^*$-$2$-category of all finite rank $G$-$L^\infty(G/K_\pm)$-$L^\infty(G/K_\pm)$-modules. For the $G$-$L^\infty(G/K_+)$-$L^\infty(G/K_-)$-mod\-ules, this amounts to proving that all irreducible representations of $K_+ \cap K_-$ appear in
$$\ell^2(\text{paths starting at $v_+$ and ending at $v_-$}) \; .$$
Since the graph is connected, the action of $K_+ \cap K_-$ on this set of paths is faithful and the result follows. The other cases are proved in the same way.
\end{proof}

\begin{remark}\label{rem.irreducible}
Note that the subfactors $N \subset M$ in Proposition \ref{prop.link-to-jones-burstein} are \emph{irreducible} precisely when $G$ acts transitively on the set of edges and there are no multiple edges. This means that the totally disconnected group $G$ is \emph{generated} by the compact open subgroups $K_\pm < G$ and that we can identify $\cE = G / (K_+ \cap K_-)$, $V_\pm = G/K_\pm$ with the natural source and target maps $G/(K_+ \cap K_-) \recht G/K_\pm$. The irreducible subfactor $N \subset M$ then has integer index given by $[M:N] = [K_+:K_+ \cap K_-] \, [K_-:K_+ \cap K_-]$.
\end{remark}

We finally note that the rigid C$^*$-tensor categories $\cC_{1,f}(K < G)$ and $\cC_{3,f}(K < G)$ also arise in a different way as categories of bimodules over a II$_1$ factor in the case where $K < G$ is the \emph{Schlichting completion} of a \emph{Hecke pair} $\Lambda < \Gamma$, cf.\ \cite[Section 4]{DV10}.

Recall that a Hecke pair consists of a countable group $\Gamma$ together with a subgroup $\Lambda < \Gamma$ that is almost normal, meaning that $g \Lambda g^{-1} \cap \Lambda$ has finite index in $\Lambda$ for all $g \in \Gamma$. The left translation action of $\Gamma$ on $\Gamma/\Lambda$ gives a homomorphism $\pi$ of $\Gamma$ to the group of permutations of $\Gamma/\Lambda$. The closure $G$ of $\pi(\Gamma)$ for the topology of pointwise convergence is a totally disconnected group and the stabilizer $K$ of the point $e\Lambda \in \Gamma/\Lambda$ is a compact open subgroup of $G$ with the property that $\Lambda = \pi^{-1}(K)$. One calls $(G,K)$ the Schlichting completion of the Hecke pair $(\Gamma,\Lambda)$. Note that there is a natural identification of $G/K$ and $\Gamma/\Lambda$.


\begin{proposition} \label{prop.identify-our-tensor-categories}
Let $\Lambda < \Gamma$ be a Hecke pair with Schlichting completion $K < G$. Choose an action $\Gamma \actson^\al P$ of $\Gamma$ by outer automorphisms of a II$_1$ factor $P$. Define $N = P \rtimes \Lambda$ and $M = P \rtimes \Gamma$. Note that $N \subset M$ is an irreducible, quasi-regular inclusion of II$_1$ factors. Denote by $\cC$ the tensor category of finite index $N$-$N$-bimodules generated by the finite index $N$-subbimodules of $L^2(M)$.

Then, $\cC$ and the earlier defined $\cC_{1,f}(K < G)$ and $\cC_{3,f}(K < G)$ are naturally equivalent rigid C$^*$-tensor categories.
\end{proposition}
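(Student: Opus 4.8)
The statement identifies the Popa–Vaes style bimodule tensor category $\cC$ coming from a Hecke pair action $N = P\rtimes\Lambda \subset M = P\rtimes\Gamma$ with the combinatorial categories $\cC_{1,f}(K<G)$ and $\cC_{3,f}(K<G)$ of the Schlichting completion. Since Proposition \ref{prop.some-equivalences-of-categories} already gives $\cC_{1,f}(K<G)\simeq\cC_{3,f}(K<G)$, it suffices to produce one natural equivalence, and it is cleanest to aim at $\cC_{1,f}(K<G)$, whose objects ($K$-$K$-$L^\infty(G)$-modules) are closest in spirit to bimodules. The key mechanism is that for a quasi-regular irreducible inclusion $N\subset M$, the finite index $N$-subbimodules of $L^2(M)$ are parametrized by the $\Lambda$-$\Lambda$-orbits on $\Gamma/\Lambda$, equivalently by the double cosets $\Lambda\backslash\Gamma/\Lambda$, which under the Schlichting identification $\Gamma/\Lambda = G/K$ are exactly $K\backslash G/K$ — and the same double cosets index the irreducible finite rank objects of $\cC_{1,f}(K<G)$ (namely the $\cH$ supported on a single $K$-$K$-double coset in $G$, carrying the residual representation of $K\cap gKg^{-1}$).

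**Steps.** First I would recall the bimodule decomposition: because $\al$ is outer, $L^2(M) = \bigoplus_{g\in\Lambda\backslash\Gamma} L^2(P)u_{\gamma_g}$ as an $N$-$N$-bimodule, and grouping by double cosets $\Lambda\gamma\Lambda$ gives the finite index $N$-subbimodules $\cK_{\Lambda\gamma\Lambda} = \overline{\lspan}\,(P u_{\Lambda\gamma\Lambda})$, with $\cC$ the tensor category they generate under Connes fusion. Second, I would build the functor $\cC_{1,f}(K<G)\to\cC$ directly: to a $K$-$K$-$L^\infty(G)$-module $\cH$ (thought of via its decomposition $\cH = \bigoplus_{g} \Pi(1_{KgK})\cH$ over double cosets, each summand a finite-dimensional Hilbert space with a $K\cap gKg^{-1}$-action in the relevant sense) assign the Connes-tensor "twisted" bimodule $\cH\ot_{?}L^2(M)$ — concretely, one transports the construction in Theorem \ref{thm.functor-to-bimod} (which embeds $\cC_1$ into bimodules over $R(K) = (pPp)^K$) over to this setting, using that $P\rtimes K$ here plays the role of the crossed product factor. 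Third, I would check that this functor is monoidal: the tensor product $\ot_{K}$ on $\cC_{1,f}$ (intertwiner-invariant vectors in $\cH\ot\cK$) matches Connes fusion over $N$ on the bimodule side, which is the computation already performed for $\cC_1$ in the proof of Theorem \ref{thm.functor-to-bimod}. Fourth, fully faithfulness reduces, as in that proof, to the outerness/strict-outerness of the $\Gamma$-action on $P$, giving $\End_{N\text{-}N}(\cK) = \End_{\cC_1}(\cH)$; essential surjectivity follows because the generating object $\cH = \ell^2(\Gamma/\Lambda)$-type module (the one with $\cH\ot_K L^2(M)\cong L^2(M)$, i.e.\ $\cH = L^2(K)$ up to the identification) maps to $L^2(M)$, and the double-coset bookkeeping shows the generated subcategories coincide. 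Finally I would invoke Proposition \ref{prop.some-equivalences-of-categories} to also get $\cC\simeq\cC_{3,f}(K<G)$.

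**Main obstacle.** The delicate point is the same one handled in Theorem \ref{thm.functor-to-bimod}: establishing the commutant computation $\End_{N\text{-}N}(\cH\ot_? L^2(M)) = \End_{\cC_1}(\cH)$, i.e.\ that no extra intertwiners appear. Here the subtlety is that the inclusion $N = P\rtimes\Lambda \subset M = P\rtimes\Gamma$ is only quasi-regular (not coming from a genuine compact-open-subgroup picture over a locally compact group until one passes to the Schlichting completion), so one must carefully match $P\rtimes\Lambda$ and $P\rtimes K$: the point is that $P\rtimes\Gamma$-modules and $P\rtimes G$-modules agree on the relevant pieces because $\pi(\Gamma)$ is dense in $G$ and $\Lambda = \pi^{-1}(K)$, so every finite-dimensional representation of $K$ (hence of each $K\cap gKg^{-1}$) that occurs is already a representation of the corresponding Hecke subgroup, and conversely. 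Once the dictionary "$\Lambda\backslash\Gamma/\Lambda = K\backslash G/K$ with matching residual finite group actions" is set up carefully, the rest is a transcription of the already-proven arguments; I would expect the write-up to be short, citing \cite[Section 4]{DV10} and the proof of Theorem \ref{thm.functor-to-bimod} for the bulk of the verification.
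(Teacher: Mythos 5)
Your overall strategy---produce one equivalence onto $\cC_{1,f}(K<G)$ or $\cC_{3,f}(K<G)$ and invoke Proposition \ref{prop.some-equivalences-of-categories} for the rest---is the same as the paper's, but two of your steps have genuine gaps. The first concerns the construction of the functor itself. You propose to ``transport'' Theorem \ref{thm.functor-to-bimod}, with ``$P\rtimes K$ playing the role of the crossed product factor''. But in the present setting only the discrete group $\Gamma$ acts on the II$_1$ factor $P$; there is no action of $K$ or $G$ on $P$ (the action $\Gamma \actson^\al P$ does not extend to the Schlichting completion), so $P\rtimes K$ is undefined and the mechanism of Theorem \ref{thm.functor-to-bimod}, which needs a strictly outer action of $G$ itself, is unavailable. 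The paper instead introduces purely discrete intermediate categories $\cC_4$ (of $\Lambda$-$\Lambda$-$\ell^\infty(\Gamma)$-modules) and $\cC_5$ (of $\Lambda$-$\ell^\infty(\Gamma/\Lambda)$-modules), defines an explicit algebraic fully faithful tensor functor $\cC_4(\Lambda<\Gamma)\recht \Bimod(N):\cH\mapsto L^2(P)\ot\cH$ (full faithfulness coming from outerness of $\al$), and precomposes with the functor $\cC_3(K<G)\recht\cC_4(\Lambda<\Gamma)$ induced by the dense homomorphism $\pi:\Lambda\recht K$. That comparison functor is fully faithful but \emph{not} essentially surjective (its image consists of those modules whose $\Lambda$-representation extends continuously to $K$), which is exactly why the argument has to be organized as ``fully faithful functor plus identification of what it generates'' rather than as a direct equivalence of the kind you sketch.

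The second gap is in your generation step. You assert that ``the double-coset bookkeeping shows the generated subcategories coincide'', but the irreducibles of $\cC_{3,f}(K<G)$ are indexed by pairs $(\pi,x)$ with $x\in K\backslash G/K$ and $\pi\in\Irr(K\cap xKx^{-1})$, as in \eqref{eq.identify-irr-C}, not by double cosets alone. What actually has to be proved is that every irreducible representation of each compact group $K\cap gKg^{-1}$ occurs in some tensor power of $L^2(G/K)$, and this uses the defining property of the Schlichting completion, $\bigcap_{h\in G}hKh^{-1}=\{e\}$, i.e.\ faithfulness of the action $K\actson G/K$. Without this input, the tensor category generated by the finite index $N$-subbimodules of $L^2(M)$ could a priori be a proper subcategory of $\cC_{3,f}(K<G)$. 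Your density remark ($\pi(\Lambda)$ dense in $K$ and $\Lambda=\pi^{-1}(K)$) addresses full faithfulness of the comparison functor, not this generation issue, so as written the essential surjectivity claim is unsupported.
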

\begin{proof}
Define
\begin{lijst}
\item[$\cC_4$~:] the category of $\Lambda$-$\Lambda$-$\ell^\infty(\Gamma)$-modules, i.e.\ Hilbert spaces $\cH$ equipped with two commuting unitary representations of $\Lambda$ and a representation of $\ell^\infty(\Gamma)$ that are covariant with respect to the left and right translation actions $\Lambda \actson \Gamma$~;

\item[$\cC_5$~:] the category of $\Lambda$-$\ell^\infty(\Gamma/\Lambda)$-modules, i.e.\ Hilbert spaces equipped with a unitary representation of $\Lambda$ and a representation of $\ell^\infty(\Gamma/\Lambda)$ that are covariant with respect to the left translation action $\Lambda \actson \Gamma/\Lambda$~:
\end{lijst}
with morphisms again given by bounded operators that intertwine the given structure.

To define the tensor product of two objects in $\cC_4$, it is useful to view $\cH \in \cC_4$ as a family of Hilbert spaces $(\cH_g)_{g \in \Gamma}$ together with unitary operators $\lambda(k) : \cH_g \recht \cH_{kg}$ and $\rho(k) : \cH_g \recht \cH_{g k^{-1}}$ for all $k \in \Lambda$, satisfying the obvious relations. The tensor product of two $\Lambda$-$\Lambda$-$\ell^\infty(\Gamma)$-modules $\cH$ and $\cK$ is then defined as
\begin{align*}
(\cH \ot_{\Lambda} \cK)_g = \Bigl\{ (\xi_h)_{h \in \Gamma} \Bigm | \;\; & \xi_h \in \cH_h \ot \cK_{h^{-1}g} \; , \\ & \xi_{hk^{-1}} = (\rho_\cH(k) \ot \lambda_\cK(k))(\xi_h) \; \text{for all}\; h \in \Gamma, k \in \Lambda \; , \\ & \sum_{h \in \Gamma/\Lambda} \|\xi_h\|^2 < \infty \Bigr\}
\end{align*}
with $\lambda(k) : (\cH \ot_{\Lambda} \cK)_g \recht (\cH \ot_{\Lambda} \cK)_{kg}$ given by $(\lambda(k)\xi)_h = (\lambda_\cH(k) \ot 1)\xi_{k^{-1}h}$ and $\rho(k) : (\cH \ot_{\Lambda} \cK)_g \recht (\cH \ot_{\Lambda} \cK)_{gk^{-1}}$ given by $(\rho(k)\xi)_h = (1 \ot \rho_\cK(k))\xi(h)$ for all $k \in \Lambda$, $h \in \Gamma$. Of course, choosing a section $i : \Gamma/\Lambda \recht \Gamma$, we have
$$(\cH \ot_\Lambda \cK)_g \cong \bigoplus_{h \in \Gamma/\Lambda} (\cH_{i(h)} \ot \cK_{i(h)^{-1} g}) \; ,$$
but this isomorphism depends on the choice of the section.

As in Proposition \ref{prop.some-equivalences-of-categories}, $\cC_4$ and $\cC_5$ are equivalent C$^*$-categories, where the equivalence and its inverse are defined as follows.
\begin{itemlist}
\item $\cC_4 \recht \cC_5 : \cH \mapsto \cK$, with
$$\cK_{g \Lambda} = \bigl\{ (\xi_h)_{h \in g \Lambda} \bigm | \xi_h \in \cH_h \; , \; \xi_{hk^{-1}} = \rho(k) \xi_h \; \text{for all}\; h \in g \Lambda, k \in \Lambda \bigr\}$$
and with the natural $\Lambda$-$\ell^\infty(\Gamma/\Lambda)$-module structure. Note that $\cK_{g \Lambda} \cong \cH_g$, but again, this isomorphism depends on a choice of section $\Gamma/\Lambda \recht \Gamma$.

\item $\cC_5 \recht \cC_4 : \cK \mapsto \cH$, with $\cH_g = \cK_{g \Lambda}$ and the obvious $\Lambda$-$\Lambda$-$\ell^\infty(\Gamma)$-module structure.
\end{itemlist}

We say that an object $\cH \in \cC_5$ has finite rank if $\cH$ is a finite dimensional Hilbert space. This is equivalent to requiring that all Hilbert spaces $\cH_{g \Lambda}$ are finite dimensional and that there are only finitely many double cosets $\Lambda g \Lambda$ for which $\cH_{g\Lambda}$ is nonzero. Similarly, we say that an object $\cH \in \cC_4$ has finite rank if all Hilbert spaces $\cH_g$ are finite dimensional and if there are only finitely many double cosets $\Lambda g \Lambda$ for which $\cH_g$ is nonzero. Note here that an algebraic variant of the category of finite rank objects in $\cC_4$ was already introduced in \cite{Z98}.

In this way, we have defined the rigid C$^*$-tensor category $\cC_{4,f}(\Lambda < \Gamma)$ consisting of the finite rank objects in $\cC_4$. Note that, in a different context, this rigid C$^*$-tensor category $\cC_{4,f}(\Lambda < \Gamma)$ already appeared in \cite[Section 4]{DV10}.

Denote by $\pi : \Gamma \recht G$ the canonical homomorphism. Identifying $G/K$ and $\Gamma/\Lambda$ and using the homomorphism $\pi : \Lambda \recht K$, every $K$-$L^\infty(G/K)$-module $\cH$ also is a $\Lambda$-$\ell^\infty(\Gamma/\Lambda)$-module. This defines a functor $\cC_2(K < G) \recht \cC_5(\Lambda < \Gamma)$ that is fully faithful because $\pi(\Lambda)$ is dense in $K$. Note however that this fully faithful functor need not be an equivalence of categories: an object $\cH \in \cC_5(\Lambda < \Gamma)$ is isomorphic with an object in the range of this functor if and only if the representation of $\Lambda$ on $\cH$ is of the form $k \mapsto \lambda(\pi(k))$ for a (necessarily unique) continuous representation $\lambda$ of $K$ on $\cH$.

Composing with the equivalence of categories in Proposition \ref{prop.some-equivalences-of-categories}, we have found the fully faithful C$^*$-tensor functor $\Theta : \cC_3(K < G) \recht \cC_4(\Lambda < \Gamma)$, sending finite rank objects to finite rank objects. By construction, $\Theta$ maps the $G$-$L^\infty(G/K)$-$L^\infty(G/K)$-module $L^2(G/K) \ot L^2(G/K)$ (with $G$-action given by $(\lambda_g \ot \lambda_g)_{g \in G}$ and obvious left and right $L^\infty(G/K)$-action) to the $\Lambda$-$\Lambda$-$\ell^\infty(\Gamma)$-module $\ell^2(\Gamma)$.

Next, given the outer action $\Gamma \actson^\al P$, we write $N = P \rtimes \Lambda$ and $M = P \rtimes \Gamma$. Consider the category $\Bimod(N)$ of Hilbert $N$-$N$-bimodules. We define the natural fully faithful C$^*$-tensor functor $\cC_4(\Lambda < \Gamma) \recht \Bimod(N) : \cH \mapsto \cK$ where $\cK = L^2(P) \ot \cH$ and where the $N$-$N$-bimodule structure on $\cK$ is given by
$$(a u_k) \cdot (b \ot \xi) \cdot (d u_{r}) = a \al_k(b)\al_{kh}(d) \ot \lambda(k)\rho(r^{-1}) \xi$$
for all $a,b,d \in P$, $k,r \in \Lambda$, $h \in \Gamma$ and $\xi \in \cH_h$. By construction, this functor maps the $\Lambda$-$\Lambda$-$\ell^\infty(\Gamma)$-module $\ell^2(\Gamma)$ to the $N$-$N$-bimodule $L^2(M)$.

Denoting by $\cC$ the tensor category of finite index $N$-$N$-bimodules generated by the finite index $N$-subbimodules of $L^2(M)$, it follows that $\cC$ is naturally monoidally equivalent to the tensor subcategory $\cC_0$ of $\cC_{3,f}(K < G)$ generated by the finite rank subobjects of $L^2(G/K) \ot L^2(G/K)$. So, it remains to prove that $\cC_0 = \cC_{3,f}(K < G)$. Taking the $n$-th tensor power of $L^2(G/K) \ot L^2(G/K)$ and applying the equivalence between the categories $\cC_{3,f}(K < G)$ and $\cC_{2,f}(K < G)$, it suffices to show that every irreducible $K$-$L^\infty(G/K)$-module appears in one of the $K$-$L^\infty(G/K)$-modules $L^2(G/K) \ot \cdots \ot L^2(G/K)$ with diagonal $G$-action and action of $L^\infty(G/K)$ on the last tensor factor. Reducing with the projections $1_{gK}$, this amounts to proving that for every $g \in G$, every irreducible representation of the compact group $K \cap g K g^{-1}$ appears in a tensor power of $L^2(G/K)$. Because $K < G$ is a Schlichting completion, we have that $\bigcap_{h \in G} hKh^{-1} = \{e\}$ so that the desired conclusion follows.
\end{proof}

\section{\boldmath The tube algebra of $\cC(K < G)$}\label{sec.tube-algebra}

Recall from \cite{Oc93} the following construction of the \emph{tube $*$-algebra} of a rigid C$^*$-tensor category $\cC$ (see also \cite[Section 3]{GJ15} where the terminology \emph{annular algebra} is used, and see as well \cite[Section 3.3]{PSV15}). Whenever $I$ is a full\footnote{Fullness means that every irreducible $i \in \Irr(\cC)$ appears as a subobject of one of the $j \in I$.} family of objects in $\cC$, one defines as follows the $*$-algebra $\cA$ with underlying vector space
$$\cA = \bigoplus_{i,j \in I, \al \in \Irr(\cC)} (i \al ,\al j) \; .$$
Here and in what follows, we denote the tensor product in $\cC$ by concatenation and we denote by $(\be,\gamma)$ the space of morphisms from $\gamma$ to $\be$. By definition, all $(\be,\gamma)$ are finite dimensional Banach spaces. Using the categorical traces $\Tr_\be$ and $\Tr_\gamma$ on $(\be,\be)$, resp.\ $(\gamma,\gamma)$, we turn $(\be,\gamma)$ into a Hilbert space with scalar product
$$\langle V,W \rangle = \Tr_\be(VW^*) = \Tr_\gamma(W^* V) \; .$$
For every $\be \in \cC$, the categorical trace $\Tr_\be$ is defined by using a standard solution for the conjugate equations for $\be$, i.e.\ morphisms $s_\be \in (\be \bebar,\eps)$ and $t_\be \in (\bebar \be,\eps)$ satisfying
$$(s_\be^* \ot 1)(1 \ot t_\be) = 1 \;\; , \;\; (1 \ot s_\be^*)(t_\be \ot 1) = 1 \;\; , \;\; t_\be^* (1 \ot V) t_\be = s_\be^* (V \ot 1) s_\be$$
for all $V \in (\be,\be)$. Then, $\Tr_\be(V) = t_\be^*(1 \ot V)t_\be = s_\be^* (V \ot 1) s_\be$ and $d(\be) = \Tr_\be(1)$ is the categorical dimension of $\be$.

We will also make use of the partial traces
$$\Tr_\be \ot \id : (\be \al,\be \gamma) \recht (\al ,\gamma) : (\Tr_\be \ot \id)(V) = (t_\be^* \ot 1) (1 \ot V) (t_\be \ot 1) \; .$$

Whenever $\cK$ is a Hilbert space, we denote by $\onb(\cK)$ any choice of orthonormal basis in $\cK$. The product in $\cA$ is then defined as follows: for $V \in (i \al, \al j)$ and $W \in (j' \be,\be,k)$, the product $V \cdot W$ equals $0$ when $j \neq j'$ and when $j=j'$, it is equal to
$$V \cdot W = \sum_{\gamma \in \Irr(\cC)} \sum_{U \in \onb(\al \be,\gamma)} d(\gamma) \, (1 \ot U^*)(V \ot 1) (1 \ot W)(U \ot 1) \; .$$
The $*$-operation on $\cA$ is denoted by $V \mapsto V^\#$ and defined by
$$V^\# = (t_\al^* \ot 1)(1 \ot V^* \ot 1)(1 \ot s_\al)$$
for all $V \in (i \al , \al j)$.

The $*$-algebra $\cA$ has a natural positive faithful trace $\Tr$ and for $V \in (i \al, \al j)$, we have that $\Tr(V) = 0$ when $i \neq j$ or $\al \neq \eps$, while $\Tr(V) = \Tr_i(V)$ when $i=j$ and $\al = \eps$, so that $V \in (i,i)$.

Up to strong Morita equivalence, the tube $*$-algebra $\cA$ does not depend on the choice of the full family $I$ of objects in $\cC$, see \cite[Theorem 3.2]{NY15b} and \cite[Section 7.2]{PSV15}. Also note that for an arbitrary object $\al \in \cC$ and $i,j \in I$, we can associate with $V \in (i \al,\al j)$ the element in $\cA$ given by
$$\sum_{\gamma \in \Irr(\cC)} \sum_{U \in \onb(\al,\gamma)} d(\gamma) \, (1 \ot U^*) V (U \ot 1) \; .$$
Although this map $(i \al , \al j) \recht \cA$ is not injective, we will view an element in $V \in (i \al , \al j)$ as an element of $\cA$ in this way.

Formally allowing for infinite direct sums in $\cC$, one defines the C$^*$-tensor category of ind-objects in $\cC$. Later in this section, we will only consider the rigid C$^*$-tensor category $\cC$ of finite rank $G$-$L^\infty(G/K)$-$L^\infty(G/K)$-modules for a given totally disconnected group $G$ with compact open subgroup $K < G$. In that case, the ind-category precisely\footnote{Using Proposition \ref{prop.some-equivalences-of-categories}, every $G$-$L^\infty(G/K)$-$L^\infty(G/K)$-module is a direct sum of finite rank modules because every $K$-$L^\infty(G/K)$-module is a direct sum of finite dimensional modules, which follows because every unitary representation of a compact group is a direct sum of finite dimensional representations.} is the C$^*$-tensor category of all $G$-$L^\infty(G/K)$-$L^\infty(G/K)$-modules. Whenever $\cK_1,\cK_2$ are ind-objects, we denote by $(\cK_1,\cK_2)$ the vector space of \emph{finitely supported} morphisms, where a morphism $V : \cK_2 \recht \cK_1$ is said to be finitely supported if there exist projections $p_i$ of $\cK_i$ onto a finite dimensional subobject (i.e.\ an object in $\cC$) such that $V = p_1 V = V p_2$.

We say that an ind-object $\cH_0$ in $\cC$ is full if every irreducible object $i \in \Irr(\cC)$ is isomorphic with a subobject of $\cH_0$. We define the tube $*$-algebra of $\cC$ with respect to a full ind-object $\cH_0$ as the vector space
$$\cA = \bigoplus_{\al \in \Irr(\cC)} (\cH_0 \al , \al \cH_0)$$
on which the $*$-algebra structure is defined in the same way as above. Note that $(\cH_0,\cH_0)$ naturally is a $*$-subalgebra of $\cA$, given by taking $\al = \eps$ in the above description of $\cA$. In particular, every projection of $p$ of $\cH_0$ on a finite dimensional subobject of $\cH_0$ can be viewed as a projection $p \in \cA$. These projections serve as local units: for every finite subset $\cF \subset \cA$, there exists such a projection $p$ satisfying $p \cdot V = V \cdot p$ for all $V \in \cF$.

Whenever $p_\eps$ is the projection of $\cH_0$ onto a copy of the trivial object $\eps$, we identify $p_\eps \cdot \cA \cdot p_\eps$ with the fusion $*$-algebra $\C[\cC]$ of $\cC$, i.e.\ the $*$-algebra with vector space basis $\Irr(\cC)$, product given by the fusion rules and $*$-operation given by the adjoint object.

To every full family $I$ of objects in $\cC$, we can associate the full ind-object $\cH_0$ by taking the direct sum of all $i \in I$. The tube $*$-algebra of $\cC$ associated with $I$ is then naturally a $*$-subalgebra of the tube $*$-algebra of $\cC$ associated with $\cH_0$. If every irreducible object of $\cC$ appears with finite multiplicity in $\cH_0$, then this inclusion is an equality and both tube $*$-algebras are naturally isomorphic.

For the rest of this section, we fix a totally disconnected group $G$ and a compact open subgroup $K < G$. We denote by $\cC$ the rigid C$^*$-tensor category of all finite rank $G$-$L^\infty(G/K)$-$L^\infty(G/K)$-modules, which we denoted as $\cC_{3,f}(K < G)$ in Section \ref{sec.our-categories}. We determine the tube $*$-algebra $\cA$ of $\cC$ with respect to the following full ind-object.
\begin{equation}\label{eq.favorite-ind-object}
\begin{split}
& \cH_0 = L^2(G \times G/K) \quad\text{with}\\
& (F \cdot \xi) (g,hK) = F(gK) \, \xi(g,hK) \;\; , \;\; (\xi \cdot F)(g,hK) = \xi(g,hK) \, F(ghK) \;\;\text{and}\\ &(\pi(x)\xi)(g,hK) = \xi(x^{-1}g,hK)
\end{split}
\end{equation}
for all $\xi \in L^2(G \times G/K)$, $F \in L^\infty(G/K)$, $x,g \in G$, $hK \in G/K$. Note that every irreducible object of $\cC$ appears with finite multiplicity in $\cH_0$.

We denote by $(\Ad g)_{g \in G}$ the action of $G$ on $G$ by conjugation: $(\Ad g)(h) = g h g^{-1}$. In the rest of this paper, we will make use of the associated full and reduced C$^*$-algebras
$$C_0(G) \rtimes_{\Ad}^f G \quad\text{and}\quad C_0(G) \rtimes_{\Ad}^r G \;\; ,$$
as well as the von Neumann algebra $L^\infty(G) \rtimes_{\Ad} G$. We fix the left Haar measure $\lambda$ on $G$ such that $\lambda(K) = 1$. We equip $L^\infty(G) \rtimes_{\Ad} G$ with the canonical normal semifinite faithful trace $\Tr$ given by
\begin{equation}\label{eq.trace-crossed-product}
\Tr(F \lambda_f) = f(e) \, \int_G F(g) \Delta(g)^{-1/2} \, dg \; .
\end{equation}
Note that the modular function $\Delta$ is affiliated with the center of $L^\infty(G) \rtimes_{\Ad} G$, so that $L^\infty(G) \rtimes_{\Ad} G$ need not be a factor. Also note that the measure used in \eqref{eq.trace-crossed-product} is half way between the left and the right Haar measure of $G$.

We consider the dense $*$-algebra $\Pol(L^\infty(G) \rtimes_{\Ad} G)$ defined as
\begin{equation}\label{eq.pol-algebra}
\begin{split}
\Pol(L^\infty(G) \rtimes_{\Ad} G) = \lspan \{ 1_\cU \, u_x \, p_L \mid \;\; & \cU \subset G \;\;\text{compact open subset}\; , \; x \in G \; , \\ & L < G \;\;\text{compact open subgroup}\}
\end{split}
\end{equation}
and where $p_L \in L(G)$ denotes the projection onto the $L$-invariant vectors, i.e.
$$p_L = \lambda(L)^{-1} \int_L u_k \, dk \; .$$
Note that $\Pol(L^\infty(G) \rtimes_{\Ad} G)$ equals the linear span of all $F \lambda_f$ where $F$ and $f$ are continuous, compactly supported, locally constant functions on $G$.

We now identify the tube $*$-algebra of $\cC$ with $\Pol(L^\infty(G) \rtimes_{\Ad} G)$. For every $x \in G$ and every irreducible representation $\pi : K \cap x K x^{-1} \recht \cU(\cK)$, we denote by $\cH(\pi,x) \in \Irr(\cC)$ the irreducible $G$-$L^\infty(G/K)$-$L^\infty(G/K)$-module such that $\pi$ is isomorphic with the representation of $K \cap x K x^{-1}$ on $1_{xK} \cdot \cH(\pi,x) \cdot 1_{e K}$. Note that this gives us the identification
\begin{equation}\label{eq.identify-irr-C}
\Irr(\cC) = \{(\pi,x) \mid x \in K\backslash G / K \; , \; \pi \in \Irr(K \cap x K x^{-1}) \} \; .
\end{equation}
We denote by $\chi_\pi$ the character of $\pi$, i.e.\ the locally constant function with support $K \cap x K x^{-1}$ and $\chi_\pi(k) = \Tr(\pi(k))$ for all $k \in K \cap x K x^{-1}$.

\begin{theorem}\label{thm.our-tube-alg}
The $G$-$L^\infty(G/K)$-$L^\infty(G/K)$-module $\cH_0$ introduced in \eqref{eq.favorite-ind-object} is full. There is a natural $*$-anti-isomorphism $\Theta$ of the associated tube $*$-algebra $\cA$ onto $\Pol(L^\infty(G) \rtimes_{\Ad} G)$. The $*$-anti-isomorphism $\Theta$ is trace preserving.

Denoting by $p_\eps$ the projection in $\cA$ that corresponds to the unique copy of the trivial object $\eps$ in $\cH_0$ and identifying $p_\eps \cdot \cA \cdot p_\eps$ with the fusion $*$-algebra of $\cC$, we have that $\Theta(p_\eps) = 1_K p_K$ and that the restriction of $\Theta$ to $\C[\cC]$ is given by
\begin{equation}\label{eq.Theta-on-fusion-algebra}
d(\pi,x)^{-1} \Theta(\pi,x) = p_K \, \dim(\pi)^{-1} \chi_\pi \, u_x \, p_K \; ,
\end{equation}
where $d(\pi,x)$ denotes the categorical dimension of $(\pi,x) \in \Irr(\cC)$ and $\dim(\pi)$ denotes the ordinary dimension of the representation $\pi$.
\end{theorem}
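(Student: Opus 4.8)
The plan is to realise $\Theta$ through explicit models of the ind-objects $\cH_0\al$ and $\al\cH_0$ for $\al\in\Irr(\cC)$, to identify the morphism spaces $(\cH_0\al,\al\cH_0)$ with pieces of $\Pol(L^\infty(G)\rtimes_{\Ad}G)$, and then to check anti-multiplicativity, compatibility of $\#$ with $*$, trace preservation, and the formulas for $p_\eps$ and $\C[\cC]$ by direct computation in these models.

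\emph{Models.} After the change of variables $\xi(g,hK)\leftrightarrow\eta(g,ghK)$ the module structure of $\cH_0=L^2(G\times G/K)$ becomes symmetric: the left $L^\infty(G/K)$-action is multiplication by $g\mapsto F(gK)$, the right action is multiplication by $g'K\mapsto F(g'K)$, and $G$ acts diagonally by left translation in both variables, so $\cH_0\cong L^2(G)\otimes\ell^2(G/K)$ with $\pi(g)=\lambda_g\otimes(\text{left translation})$; in particular $\overline{\cH_0}\cong\cH_0$. A short computation, using $\ell^2(G/K)\otimes_{L^\infty(G/K)}\al\cong\al$, then gives $\cH_0\al\cong L^2(G)\otimes\al$ with $\pi(g)=\lambda_g\otimes\pi_\al(g)$, the left $L^\infty(G/K)$-action on the $L^2(G)$-leg and the right one on $\al$; dually $\al\cH_0\cong(\al\otimes_{L^\infty(G/K)}L^2(G))\otimes\ell^2(G/K)$. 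A finitely supported $V\colon\cH_0\al\recht\al\cH_0$ is therefore, by equivariance under the regular representation $\lambda$ on the $L^2(G)$-leg, given by a kernel supported on the diagonal of that leg; unravelling the remaining $L^\infty(G/K)$-equivariance and, for $\al=(\pi,x)$ irreducible (cf.\ \eqref{eq.identify-irr-C}), the $K\cap xKx^{-1}$-equivariance, identifies $(\cH_0\al,\al\cH_0)$ with a space of finitely supported locally constant $\End(\cK)$-valued functions on $G$ supported over the double coset $KxK$, where $\cK$ is the representation space of $\pi$. Summing over $\al$ and comparing with \eqref{eq.pol-algebra} shows that $\cH_0$ is full, fixes the vector space $\cA$, and lets one define a linear bijection $\Theta\colon\cA\recht\Pol(L^\infty(G)\rtimes_{\Ad}G)$ by contracting the $\End(\cK)$-valued kernel against the character $\chi_\pi$ (equivalently, against a standard solution of the conjugate equations for $\al$): $\Theta(V)=F_V\,\lambda_{f_V}$, with $f_V$ built from $\chi_\pi$ and supported over $KxK$, and $F_V\in C_0(G)$ recording the scalar kernel on $L^2(G)$.

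\emph{Algebraic identities.} One then verifies $\Theta(V\cdot W)=\Theta(W)\,\Theta(V)$ by expanding the tube product $V\cdot W=\sum_{\gamma}\sum_{U\in\onb(\al\be,\gamma)}d(\gamma)\,(1\ot U^*)(V\ot1)(1\ot W)(U\ot1)$ in these models: the sum over $\gamma$ and $U$ collapses via $\sum_U UU^*=1$ and the partial-trace formula, the composition on the $L^2(G)$-leg becomes pointwise multiplication of the $F$'s, and, after contracting against the characters, the composition on the internal legs of $\al$ and $\be$ produces exactly the $\Ad$-twisted convolution appearing in $(F_1\lambda_{f_1})(F_2\lambda_{f_2})$, but with the factors in reversed order — whence the anti-isomorphism. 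Next $\Theta(V^\#)=\Theta(V)^*$ follows by comparing the categorical $\#$, built from the standard solutions $s_\al,t_\al$, with $(F\lambda_f)^*$ in $L^\infty(G)\rtimes_{\Ad}G$, the modular function entering as $\Delta(g)^{\pm1/2}$ in agreement with the weight in \eqref{eq.trace-crossed-product}. Finally, trace preservation: both $\Tr$ on $\cA$ (equal to $\Tr_i$ on the $\al=\eps$, $i=j$ part and $0$ otherwise) and $\Tr$ on $L^\infty(G)\rtimes_{\Ad}G$ (formula \eqref{eq.trace-crossed-product}) are supported on the same ``scalar corner'' and the constants match thanks to $\lambda(K)=1$.

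\emph{The fusion algebra and $p_\eps$.} The unique copy of $\eps$ in $\cH_0$ is, in the symmetric coordinates, generated by the $\pi$-semi-invariant vector $1_K\otimes\delta_{eK}$ and is excised on both sides by averaging over $K$; cutting by the corresponding projection $p_\eps$ and tracing through the description of $\Theta$ on the basis $(\pi,x)\in\Irr(\cC)$ of $p_\eps\cdot\cA\cdot p_\eps=\C[\cC]$ yields \eqref{eq.Theta-on-fusion-algebra}, and the case $(\pi,x)=(\text{trivial representation of }K,\,e)$, where $d(\pi,x)=1$ and $\chi_\pi=1_K$, gives $\Theta(p_\eps)=p_K1_Kp_K=1_Kp_K$ (one checks separately that $1_Kp_K=p_K1_K$ is indeed a projection, as it must be). The genuinely delicate part of the whole argument is the model computation above together with the consistent bookkeeping of all normalisations — the standard solutions $s_\al,t_\al$, the categorical dimensions $d(\pi,x)$ and $d(\gamma)$, the ordinary dimensions $\dim(\pi)$, the factors $\Delta(g)^{\pm1/2}$, and the multiplicities of the irreducibles in $\cH_0$ — so that the tube-algebra product and $\#$ become \emph{exactly} the convolution and adjoint of $C_0(G)\rtimes_{\Ad}G$ with the trace \eqref{eq.trace-crossed-product}; the simultaneous demands that \eqref{eq.Theta-on-fusion-algebra} hold with the precise constant $d(\pi,x)^{-1}$ and that $\Theta$ be trace-preserving are the sharpest checks that the normalisations are right.
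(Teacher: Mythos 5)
Your proposal is correct in outline but takes a genuinely different route from the paper. The paper does not build $\Theta$ by hand from explicit models of the morphism spaces $(\cH_0\al,\al\cH_0)$; instead it observes that every nondegenerate representation $\Psi$ of $C_0(G)\rtimes^f_{\Ad}G$ gives rise to a unitary half braiding on ind-$\cC$ (via $\cK_1=\cK\ot L^2(G/K)$ and the flip composed with $(\Psi\ot\id)(U)$), and then invokes \cite[Proposition 3.14]{PSV15} to obtain a $*$-anti-homomorphism from the tube algebra into $B(\cK_2)$; specializing $\Psi$ to the regular representation yields $\Theta$. Thus anti-multiplicativity and the compatibility of $\#$ with $*$ --- exactly the verifications you propose to carry out by expanding the tube product and tracking every normalisation --- come for free, and the paper's direct computations are confined to the concrete formula \eqref{eq.concrete-formula}, trace preservation (which gives injectivity), surjectivity (by exhibiting explicit preimages of the generators, cf.\ \eqref{eq.we-have-these}), and the specialisation \eqref{eq.Theta-on-fusion-algebra}. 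Your approach buys self-containedness at the price of precisely the bookkeeping you yourself flag as ``the genuinely delicate part''; the paper's approach buys conceptual economy and, as a by-product, the correspondence between representations of $C_0(G)\rtimes^f_{\Ad}G$ and half braidings that is reused in Proposition \ref{prop.cp-multipliers-vs-states}. The one place where your sketch is materially thinner than a proof is bijectivity: you fold it into the asserted identification of $(\cH_0\al,\al\cH_0)$ with spaces of $\End(\cK)$-valued kernels over $KxK$ contracted against $\chi_\pi$, and neither the claim that this contraction loses no information nor the claim that the images of the various $\al$-summands are independent and jointly exhaust $\Pol(L^\infty(G)\rtimes_{\Ad}G)$ is argued --- this is exactly the point where the paper needs its separate, explicit surjectivity construction, so you should expect to have to supply an analogous argument rather than read it off from the model.
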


\begin{proof}
To see that $\cH_0$ is full, it suffices to observe that for every $h \in G$, the unitary representation of $K \cap h Kh^{-1}$ on $1_{eK} \cdot \cH_0 \cdot 1_{hK}$ contains the regular representation of $K \cap h Kh^{-1}$.

Assume that $\Psi : C_0(G) \rtimes^f_{\Ad} G \recht B(\cK)$ is any nondegenerate $*$-representation. As follows, we associate with $\Psi$ a unitary half braiding\footnote{Formally, a unitary half braiding is an object in the Drinfeld center of ind-$\cC$. More concretely, a unitary half braiding consists of an underlying ind-object $\cK_1$ together with natural unitary isomorphisms $\cH  \cK_1 \recht \cK_1  \cH$ for all objects $\cH$. We refer to \cite[Section 2.1]{NY15a} for further details.} on ind-$\cC$. Whenever $\cH$ is a $G$-$L^\infty(G/K)$-$L^\infty(G/K)$-module, we consider a new $G$-$L^\infty(G/K)$-$L^\infty(G/K)$-module with underlying Hilbert space $\cK \ot \cH$ and structure maps
$$\pi_{\cK \ot \cH}(g) = \Psi(g) \ot \pi_\cH(g) \;\; , \;\; \lambda_{\cK \ot \cH}(F) = (\Psi \ot \lambda_\cH)\Delta(F) \;\; , \;\; \rho_{\cK \ot \cH}(F) = 1 \ot \rho_\cH(F) \;\; ,$$
for all $g \in G$, $F \in L^\infty(G/K)$, with $\Delta(F)(g,hK) = F(ghK)$.

We similarly turn $\cH \ot \cK$ into a $G$-$L^\infty(G/K)$-$L^\infty(G/K)$-module with structure maps
$$\pi_{\cH \ot \cK}(g) = \pi_\cH(g) \ot \Psi(g) \;\; , \;\; \lambda_{\cH \ot \cK}(F) = \lambda_\cH(F) \ot 1 \;\; , \;\; \rho_{\cH \ot \cK}(F) = (\rho_\cH \ot \Psi)\Deltatil(F) \;\;,$$
where $\Deltatil(F)(gK,h) = F(h^{-1} gK)$.

Defining the unitary $U \in M(C_0(G) \ot K(\cH))$ given by $U(x) = \pi_\cH(x)$ for all $x \in G$ and denoting by $\Sigma : \cK \ot \cH \recht \cH \ot \cK$ the flip map, one checks that $\Sigma (\Psi \ot \id)(U)$ is an isomorphism between the $G$-$L^\infty(G/K)$-$L^\infty(G/K)$-modules $\cK \ot \cH$ and $\cH \ot \cK$. So, defining
$$\cK_1 := \cK \ot L^2(G/K) \cong L^2(G/K) \ot \cK \; ,$$
we have found the $G$-$L^\infty(G/K)$-$L^\infty(G/K)$-module $\cK_1$ with the property that for every $G$-$L^\infty(G/K)$-$L^\infty(G/K)$-module $\cH$, there is a natural unitary isomorphism
$$\sigma_\cH : \cH \cK_1 \recht \cK_1 \cH \; .$$
Here and in what follows, we denote by concatenation the tensor product in the category of $G$-$L^\infty(G/K)$-$L^\infty(G/K)$-modules. So, $\sigma$ is a unitary half braiding for ind-$\cC$.

Using the ind-object $\cH_0$ defined in \eqref{eq.favorite-ind-object} and recalling that $\cK_1 \overline{\cH_0} = \cK \ot \overline{\cH_0}$ as Hilbert spaces, we define the Hilbert space
$$\cK_2 = (\cK \ot \overline{\cH_0},\eps)$$
and we consider the tube $*$-algebra $\cA$ associated with $\cH_0$. Using standard solutions for the conjugate equations, there is a natural linear bijection
$$V \in (\cH_0  \cH , \cH \cH_0) \mapsto \Vtil \in (\cH  \overline{\cH_0} , \overline{\cH_0} \cH)$$
between finitely supported morphisms.

By \cite[Proposition 3.14]{PSV15} and using the partial categorical trace $\Tr_\cH \ot \id \ot \id$, the unitary half braiding $\sigma$ gives rise to a nondegenerate $*$-anti-homomorphism $\Theta : \cA \recht B(\cK_2)$ given by
\begin{equation}\label{eq.first-formula-Theta}
\Theta(V)\xi = (\Tr_\cH \ot \id \ot \id)\bigl( (\sigma_\cH^* \ot 1)(1 \ot \Vtil)(\xi \ot 1)\bigr)
\end{equation}
for all $\cH \in \cC$, $\xi \in \cK_2$ and all finitely supported $V \in (\cH_0 \cH , \cH \cH_0)$.

We now compute the expression in \eqref{eq.first-formula-Theta} more concretely. Whenever $h \in G$ and $K_0 < K$ is an open subgroup such that $hK_0 h^{-1} \subset K$, we define the finite rank $G$-$L^\infty(G/K)$-$L^\infty(G/K)$-module $L^2(G/K_0)_h$ with underlying Hilbert space $L^2(G/K_0)$ and structure maps
$$(x \cdot \xi)(gK_0) = \xi(x^{-1}gK_0) \;\; , \;\; (F_1 \cdot \xi \cdot F_2)(g K_0) = F_1(g K) \, \xi(g K_0) \, F_2(g h^{-1} K) \; .$$
Note that there is a natural isomorphism $\overline{L^2(G/K_0)_h} \cong L^2(G/K_0)_{h^{-1}}$. Letting $K_0$ tend to $\{e\}$, the direct limit of $L^2(G/K_0)_{h^{-1}}$ becomes $L^2(G)_{h^{-1}}$. Since $\cH_0 = \bigoplus_{h \in G/K} L^2(G)_{h^{-1}}$, we identify
$$\overline{\cH_0} = \bigoplus_{h \in G/K} L^2(G)_h$$
and we view $L^2(G/K_0)_h \subset \overline{\cH_0}$ whenever $h \in G$ and $K_0 < K \cap h^{-1} K h$ is an open subgroup.

The Hilbert space $\cK_2$ equals the space of $K$-invariant vectors in $1_{eK} \cdot (\cK \ot \overline{\cH_0}) \cdot 1_{eK}$. In this way, the space of $K$-invariant vectors in $1_{eK} \cdot (\cK \ot L^2(G/K_0)_h) \cdot 1_{eK}$ naturally is a subspace of $\cK_2$. But this last space of $K$-invariant vectors can be unitarily identified with $\Psi(1_{Kh^{-1}} \, p_{h K_0 h^{-1}}) \cK$ by sending the vector $\xi_0 \in \Psi(1_{Kh^{-1}} \, p_{h K_0 h^{-1}}) \cK$ to the vector
$$\Delta(h)^{-1/2} \sum_{k \in K/hK_0h^{-1}} \Psi(k) \xi_0 \ot 1_{khK_0} \in \cK \ot L^2(G/K_0) \; .$$
We now use that for every $\cH \in \cC$, the categorical trace $\Tr_\cH$ on $(\cH,\cH)$ is given by
\begin{align*}
\Tr_\cH(V) & =  \sum_{x \in G/K , \eta \in \onb(1_{xK} \cdot \cH \cdot 1_{eK})} \Delta(x)^{-1/2} \, \langle V \eta,\eta \rangle \\ & =  \sum_{y \in G/K , \eta \in \onb(1_{eK} \cdot \cH \cdot 1_{yK})} \Delta(y)^{-1/2} \, \langle V \eta,\eta \rangle \; .
\end{align*}
A straightforward computation then gives that for all $\cH \in \cC$ and all
$$V \in \bigl(\;\overline{L^2(G/K_0)_g} \; \cH \; , \; \cH \; \overline{L^2(G/K_1)_h}\;\bigr)$$
with $g,h \in G$ and $K_0 < K \cap g^{-1} K g$, $K_1 < K \cap h^{-1} K h$ open subgroups, we have
\begin{equation}\label{eq.concrete-formula}
\begin{split}
& \Theta(V) = \Delta(g)^{-1/2} \, \Delta(h)^{1/2} \, [K : K_1] \\ & \sum_{\substack{x \in G/gK_0g^{-1} \\  y \in K /K_2 \\ \eta \in \onb(1_{xK} \cdot \cH \cdot 1_{eK})}}  \Delta(x)^{-1/2} \; \Psi(1_{K_2 y^{-1} h^{-1}} \; u_x \; p_{g K_0 g^{-1}}) \; \langle \Vtil (1_{xg K_0} \ot \eta) , \pi_\cH(h y) \eta \ot 1_{h K_1} \rangle \; ,
\end{split}
\end{equation}
whenever $K_2 < K$ is a small enough open subgroup such that $\pi_\cH(k)$ is the identity on $\cH \cdot 1_{eK}$ for all $k \in K_2$. Note that because $\cH$ has finite rank, such an open subgroup $K_2$ exists. Also, there are only finitely many $x \in G/K$ such that $1_{xK} \cdot \cH \cdot 1_{eK}$ is nonzero. Therefore, the sum appearing in \eqref{eq.concrete-formula} is finite.

Applying this to the regular representation $C_0(G) \rtimes^f_{\Ad} G \recht B(L^2(G \times G))$, we see that \eqref{eq.concrete-formula} provides a $*$-anti-homomorphisms $\Theta$ from $\cA$ to $\Pol(L^\infty(G) \rtimes_{\Ad} G)$. A direct computation gives that $\Theta$ is trace preserving, using the trace $\Tr$ on $L^\infty(G) \rtimes_{\Ad} G$ defined in \eqref{eq.trace-crossed-product}. In particular, $\Theta$ is injective.

We now prove that $\Theta$ is surjective. Fix elements $g,h,\al \in G$ satisfying $\al g = h \al$. Choose any open subgroup $K_0 < K$ such that $g K_0 g^{-1}$, $\al K_0 \al^{-1}$ and $K_1 := h^{-1} \al K_0 \al^{-1} h$ are all subgroups of $K$. Put $\cH = L^2(G/K_0)_\al$ and note that $\cH$, $L^2(G/K_0)_g$ and $L^2(G/K_1)_h$ are well defined objects in $\cC$. For every $k \in K$, we consider the vectors
\begin{align*}
& 1_{k \al g K_0} \ot 1_{k \al K_0} \in 1_{k \al g K} \cdot \bigl( L^2(G/K_0)_g \; \cH \bigr) \cdot 1_{eK} \quad\text{and} \\
& 1_{k h \al K_0} \ot 1_{k h K_1} \in 1_{k \al g K} \cdot \bigl( \cH \; L^2(G/K_1)_h \bigr) \cdot 1_{eK} \; .
\end{align*}
In both cases, we get an orthogonal family of vectors indexed by
$$k \in K / (K \cap \al K_0 \al^{-1} \cap \al g K_0 (\al g)^{-1}) \; .$$
So, we can uniquely define $V \in \bigl(\;\overline{L^2(G/K_0)_g} \; \cH \; , \; \cH \; \overline{L^2(G/K_1)_h}\;\bigr)$ such that the restriction of $\Vtil$ to $\bigl( L^2(G/K_0)_g \; \cH \bigr) \cdot 1_{eK}$ is the partial isometry given by
$$1_{k \al g K_0} \ot 1_{k \al K_0} \mapsto \Delta(\al)^{-1/2} \, \Delta(h)^{-1/2} \, 1_{k h \al K_0} \ot 1_{k h K_1} \quad\text{for all}\;\; k \in K \; .$$
A direct computation gives that $\Theta(V)$ is equal to a nonzero multiple of
\begin{equation}\label{eq.we-have-these}
1_{\al K_0 \al^{-1} h^{-1}} \; u_\al \; p_{g K_0 g^{-1}} \; .
\end{equation}
From \eqref{eq.concrete-formula}, we also get that $\Theta$ maps $(\cH_0,\cH_0) \subset \cA$ onto $\Pol(L^\infty(K \backslash G) \rtimes K)$, defined as the linear span of all
$$1_{K x} \; u_k \; p_L$$
with $x \in G$, $k \in K$ and $L < K$ an open subgroup. In combination with \eqref{eq.we-have-these}, it follows that $\Theta$ is surjective.

Finally, by restricting \eqref{eq.concrete-formula} to the cases where $g=h=e$ and $K_0=K_1=K$, we find that \eqref{eq.Theta-on-fusion-algebra} holds.
\end{proof}

We recall from \cite{PV14} the notion of a \emph{completely positive (cp) multiplier} on a rigid C$^*$-tensor category $\cC$. By \cite[Proposition 3.6]{PV14}, to every function $\vphi : \Irr(\cC) \recht \C$ is associated a system of linear maps
\begin{equation}\label{eq.system-of-linear-maps}
\Psi^\vphi_{\al_1 | \beta_1,\al_2 | \beta_2} : (\al_1 \be_1, \al_2 \be_2) \recht (\al_1 \be_1, \al_2 \be_2) \quad\text{for all}\;\; \al_i,\be_i \in \cC
\end{equation}
satisfying
$$\Psi^\vphi_{\al_3 | \beta_3,\al_4 | \beta_4} ((X \ot Y) V (Z \ot T)) = (X \ot Y) \; \Psi^\vphi_{\al_1 | \beta_1,\al_2 | \beta_2}(V) \; (Z \ot T)$$
for all $X \in (\al_3,\al_1)$, $Y \in (\be_3,\be_1)$, $Z \in (\al_2,\al_4)$, $T \in (\be_2,\be_4)$, as well as
\begin{align*}
& \Psi^\vphi_{\al | \albar,\eps | \eps}(s_\al) = \vphi(\al) \, s_\al \quad\text{and} \\
& \Psi^\vphi_{\al_1 \al_2 | \be_2 \be_1, \al_3 \al_4 | \be_4 \be_3}(1 \ot V \ot 1) = 1 \ot \Psi^\vphi_{\al_2 | \be_2, \al_4 | \be_4}(V) \ot 1
\end{align*}
for all $V \in (\al_2 \be_2, \al_4 \be_4)$.

\begin{definition}[{\cite[Definition 3.4]{PV14}}]\label{def.cp-cb-multiplier}
Let $\cC$ be a rigid C$^*$-tensor category.
\begin{itemlist}
\item A \emph{cp-multiplier} on $\cC$ is a function $\vphi : \Irr(\cC) \recht \C$ such that the maps $\Psi^\vphi_{\al | \be,\al | \be}$ on $(\al\be,\al\be)$ are completely positive for all $\al,\be \in \cC$.

\item A cp-multiplier $\vphi : \Irr(\cC) \recht \C$ is said to be $c_0$ if the function $\vphi : \Irr(\cC) \recht \C$ tends to zero at infinity.

\item A \emph{cb-multiplier} on $\cC$ is a function $\vphi : \Irr(\cC) \recht \C$ such that
$$\|\vphi\|\cb := \sup_{\al_i,\be_i \in \cC} \|\Psi^\vphi_{\al_1 | \be_1,\al_2 | \be_2}\|\cb < \infty \; .$$
\end{itemlist}
\end{definition}

A function $\vphi : \Irr(\cC) \recht \C$ gives rise to the following linear functional $\om_\vphi : \cA \recht \C$ on the tube algebra $\cA$ of $\cC$ with respect to any full family of objects containing once the trivial object $\eps$:
$$\om_\vphi : \cA \recht \C : \om_\vphi(V) = \begin{cases} d(\al) \, \vphi(\al) &\quad\text{if $V = 1_\al \in (\eps \al , \al \eps)$,} \\
0 &\quad\text{if $V \in (i \al, \al j)$ with $i \neq \eps$ or $j \neq \eps$.}\end{cases}$$
By \cite[Theorem 6.6]{GJ15}, the function $\vphi : \Irr(\cC) \recht \C$ is a cp-multiplier in the sense of Definition \ref{def.cp-cb-multiplier} if and only if $\om_\vphi$ is positive on $\cA$ in the sense that $\om_\vphi(V \cdot V^\#) \geq 0$ for all $V \in \cA$. In Proposition \ref{prop.equality-of-cb-norms}, we prove a characterization of cb-multipliers in terms of completely bounded multipliers of the tube $*$-algebra.

From Theorem \ref{thm.our-tube-alg}, we then get the following result. We again denote by $\cC$ be the rigid C$^*$-tensor category of finite rank $G$-$L^\infty(G/K)$-$L^\infty(G/K)$-modules and we identify $\Irr(\cC)$ as in \eqref{eq.identify-irr-C} with the set of pairs $(\pi,x)$ where $x \in K \backslash G / K$ and $\pi$ is an irreducible representation of the compact group $K \cap x K x^{-1}$. In order to identify the $c_0$ cp-multipliers on $\cC$, we introduce the following definition.

\begin{definition}\label{def.c-0-state}
We say that a complex measure $\mu$ on $G$ (i.e.\ an element of $C_0(G)^*$) is $c_0$ if
$$\lambda(\mu) := \int_G \lambda_g \; d\mu(g) \in L(G)$$
belongs to $C^*_r(G)$.

We say that a positive functional $\om$ on $C_0(G) \rtimes^f_{\Ad} G$ is $c_0$ if for every $x \in G$, the complex measure $\mu_x$ defined by $\mu_x(F) = \om(F u_x)$ for all $F \in C_0(G)$ is $c_0$ and if the function $G \recht C^*_r(G) : x \mapsto \lambda(\mu_x)$ tends to zero at infinity, i.e.\ $\lim_{x \recht \infty} \|\lambda(\mu_x)\| = 0$.
\end{definition}

\begin{proposition}\label{prop.cp-multipliers-vs-states}
The formula
\begin{equation}\label{eq.formula-cp-mult-vs-states}
\vphi(\pi,x) = \om(p_K \, \dim(\pi)^{-1} \chi_\pi \, u_x \, p_K)
\end{equation}
gives a bijection between the cp-multipliers $\vphi$ on $\Irr(\cC)$ and the positive functionals $\om$ on the C$^*$-algebra $q (C_0(G) \rtimes^f_{\Ad} G) q$, where $q = 1_K p_K$.

The cp-multiplier $\vphi$ is $c_0$ if and only if the positive functional $\om$ is $c_0$ in the sense of Definition \ref{def.c-0-state}.

Using the notations $C_u(\cC)$ and $C_r(\cC)$ of \cite[Definition 4.1]{PV14} for the universal and reduced C$^*$-algebra of $\cC$, we have the natural anti-isomorphisms $C_u(\cC) \recht q (C_0(G) \rtimes^f_{\Ad} G) q$ and $C_r(\cC) \recht q (C_0(G) \rtimes^r_{\Ad} G) q$.
\end{proposition}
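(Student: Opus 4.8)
The plan is to deduce all three assertions from Theorem~\ref{thm.our-tube-alg} together with the general dictionary, going back to \cite{PV14} and \cite{GJ15}, between cp-multipliers on a rigid C$^*$-tensor category, positive functionals on its tube algebra, and its universal and reduced C$^*$-algebras.

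I would first prove the third assertion, since the other two build on it. Recall (cf.\ \cite{GJ15, PSV15}) that $C_u(\cC)$, resp.\ $C_r(\cC)$, is canonically isomorphic to the corner $p_\eps \cdot C^*_u(\cA) \cdot p_\eps$, resp.\ $p_\eps\cdot C^*_r(\cA)\cdot p_\eps$, where $C^*_u(\cA)$ is the universal C$^*$-envelope of the tube $*$-algebra $\cA$ and $C^*_r(\cA)$ is its image in the GNS representation of the canonical trace; here one uses that every irreducible object of $\cC$ occurs with finite multiplicity in $\cH_0$, so that the tube algebra built from $\cH_0$ is the standard one and $p_\eps\cA p_\eps=\C[\cC]$ on the nose. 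By Theorem~\ref{thm.our-tube-alg} the trace-preserving $*$-anti-isomorphism $\Theta$ extends to $*$-anti-isomorphisms $C^*_u(\cA)\cong C_0(G)\rtimes^f_{\Ad}G$ and $C^*_r(\cA)\cong C_0(G)\rtimes^r_{\Ad}G$: the first because $\Pol(L^\infty(G)\rtimes_{\Ad}G)$ is a dense $*$-subalgebra of $C_0(G)\rtimes^f_{\Ad}G$ whose universal C$^*$-completion is the full crossed product, and the second because the GNS representation of the trace \eqref{eq.trace-crossed-product} restricted to $\Pol(L^\infty(G)\rtimes_{\Ad}G)$ generates exactly $C_0(G)\rtimes^r_{\Ad}G$. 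Since $\Theta(p_\eps)=q=1_Kp_K$, compressing by $p_\eps$ yields the desired anti-isomorphisms $C_u(\cC)\cong q(C_0(G)\rtimes^f_{\Ad}G)q$ and $C_r(\cC)\cong q(C_0(G)\rtimes^r_{\Ad}G)q$.

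For the first assertion I would use that by \cite{PV14} (see also \cite[Theorem 6.6]{GJ15}) the map $\om\mapsto\bigl(\al\mapsto\om(\al)/d(\al)\bigr)$ is a bijection from the positive functionals on $C_u(\cC)$ onto the cp-multipliers on $\cC$; here $\C[\cC]$ is viewed as a dense subalgebra of $C_u(\cC)$. Transporting a positive functional on $q(C_0(G)\rtimes^f_{\Ad}G)q$ through the anti-isomorphism just constructed and using \eqref{eq.Theta-on-fusion-algebra}, i.e.\ $d(\pi,x)^{-1}\Theta(\pi,x)=p_K\dim(\pi)^{-1}\chi_\pi u_x p_K$, produces exactly formula \eqref{eq.formula-cp-mult-vs-states}; bijectivity is then immediate since the two assignments are mutually inverse.

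The heart of the matter is the second assertion. Identifying $\Irr(\cC)=\bigsqcup_{x\in K\backslash G/K}\Irr(K\cap xKx^{-1})$, one has $\vphi\in c_0(\Irr(\cC))$ exactly when $\vphi(\,\cdot\,,x)\in c_0(\Irr(K\cap xKx^{-1}))$ for every $x$ and $\sup_\pi|\vphi(\pi,x)|\to 0$ as $x\to\infty$ in $K\backslash G/K$. On the other side, put $H_x:=K\cap xKx^{-1}$ and extend $\om$ to $C_0(G)\rtimes^f_{\Ad}G$ by $\om(a):=\om(qaq)$. A direct computation gives, for $F\in C_0(G)$, that $qFu_xq=\int_K\bigl((\Ad k)(F)\,1_{kH_xk^{-1}}\bigr)\,u_{kx}\,p_K\,dk$; this shows that the measure $\mu_x(F)=\om(qFu_xq)$ of Definition~\ref{def.c-0-state} is supported on $H_x$ and is invariant under conjugation by $H_x$, and, taking $F=\chi_\pi$, that $\mu_x(\chi_\pi)=\om(p_K\chi_\pi u_x p_K)=\dim(\pi)\,\vphi(\pi,x)$. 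By Schur's lemma $\widehat{\mu_x}(\pi):=\int_{H_x}\pi\,d\mu_x=\vphi(\pi,x)\,\id$, so under $L(H_x)\cong\prod_{\pi\in\Irr(H_x)}M_{\dim(\pi)}(\C)$ the operator $\lambda(\mu_x)$ becomes $(\vphi(\pi,x)\,\id)_\pi$. Since $H_x$ is open, $C^*_r(H_x)=C^*_r(G)\cap L(H_x)$ sits isometrically in $C^*_r(G)$; hence $\lambda(\mu_x)\in C^*_r(G)$ iff $\vphi(\,\cdot\,,x)\in c_0$, and in that case $\|\lambda(\mu_x)\|=\sup_\pi|\vphi(\pi,x)|$. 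Comparing the two descriptions, $\vphi$ is a $c_0$ cp-multiplier precisely when $\om$ is $c_0$ in the sense of Definition~\ref{def.c-0-state}. The hard part is exactly this last paragraph — the identity for $qFu_xq$ and the bookkeeping it entails (tracking how the two-sided $p_K$-averaging interacts with the conjugation action, so that the factors $1_K$ can be carried across $u_x$ and the integral over $K$ reorganised, verifying that $\mu_x$ is genuinely supported on and $\Ad H_x$-invariant, and translating asymptotics along $\Irr(H_x)$ into asymptotics along $\Irr(\cC)$); everything else is a matter of invoking Theorem~\ref{thm.our-tube-alg} and the cited results.
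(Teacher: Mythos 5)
Your proposal is correct and follows essentially the same route as the paper: the first and third assertions are obtained by transporting the \cite{GJ15}/\cite{PV14} dictionary between cp-multipliers, positive functionals on the tube algebra, and $C_u(\cC)$, $C_r(\cC)$ through the trace-preserving $*$-anti-isomorphism $\Theta$ of Theorem \ref{thm.our-tube-alg}, and the $c_0$ statement is proved by extending $\om$ via $T \mapsto \om(qTq)$, checking that $\mu_x$ is supported on and $\Ad$-invariant under $K \cap xKx^{-1}$, and diagonalizing $\lambda(\mu_x)$ over $\Irr(K \cap xKx^{-1})$ so that $\|\lambda(\mu_x)\| = \sup_\pi |\vphi(\pi,x)|$. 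Your explicit formula for $qFu_xq$ merely fills in a computation the paper asserts without proof, so there is no substantive difference.
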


\begin{proof}
Note that the $G$-$L^\infty(G/K)$-$L^\infty(G/K)$-module $\cH_0$ in \eqref{eq.favorite-ind-object} contains exactly once the trivial module. The first part of the proposition is then a direct consequence of Theorem \ref{thm.our-tube-alg} and the above mentioned characterization \cite{GJ15} of cp-multipliers as positive functionals on the tube $*$-algebra. The isomorphisms for $C_u(\cC)$ and $C_r(\cC)$ follow in the same way.

Fix a positive functional $\om$ on $q (C_0(G) \rtimes^f_{\Ad} G) q$ with corresponding cp-multiplier $\vphi : \Irr(\cC) \recht \C$ given by \eqref{eq.formula-cp-mult-vs-states}. We extend $\om$ to $C_0(G) \rtimes^f_{\Ad} G$ by $\om(T) = \om(qTq)$. For every $x \in G$, define $\mu_x \in C_0(G)^*$ given by $\mu_x(F) = \om(F u_x)$ for all $F \in C_0(G)$. Note that $\mu_x$ is supported on $K \cap x K x^{-1}$ and that $\mu_x$ is $\Ad(K \cap x K x^{-1})$-invariant. Therefore, $\lambda(\mu_x) \in \cZ(L(K \cap x K x^{-1}))$. For every $\pi \in \Irr(K \cap x K x^{-1})$, denote by $z_\pi \in \cZ(L(K \cap x K x^{-1}))$ the corresponding minimal central projection. From \eqref{eq.formula-cp-mult-vs-states}, we get that
\begin{equation}\label{eq.tussenformule}
\lambda(\mu_x) z_\pi = \vphi(\pi,x) z_\pi \quad\text{for all}\;\; x \in G \; , \; \pi \in \Irr(K \cap x K x^{-1}) \; .
\end{equation}
For a fixed $x \in G$, an element $T \in \cZ(L(K \cap x K x^{-1}))$ belongs to $C^*_r(G)$ if and only if $T \in C^*_r(K \cap x K x^{-1})$ if and only if $\lim_{\pi \recht \infty} \|T z_\pi\| = 0$. Also, $\|T\| = \sup_{\pi \in \Irr(K \cap x K x^{-1})} \|T z_\pi\|$. So by \eqref{eq.tussenformule}, we get that $\mu_x$ is $c_0$ if and only if
\begin{equation}\label{eq.fixed-x}
\lim_{\pi \recht \infty} |\vphi(\pi,x)| = 0
\end{equation}
and that $\om$ is a $c_0$ functional if and only if \eqref{eq.fixed-x} holds for all $x \in G$ and we moreover have that
$$\lim_{x \recht \infty} \bigl( \sup_{\pi \in \Irr(K \cap x K x^{-1})} |\vphi(\pi,x)| \bigr) = 0 \; .$$
Altogether, it follows that $\om$ is a $c_0$ functional in the sense of Definition \ref{def.c-0-state} if and only if $\vphi$ is a $c_0$-function.
\end{proof}

For later use, we record the following lemma.

\begin{lemma}\label{lem.ac-c0}
Let $\mu$ be a probability measure on $G$ that is $c_0$ in the sense of Definition \ref{def.c-0-state}. Then every complex measure $\om \in C_0(G)^*$ that is absolutely continuous with respect to $\mu$ is still $c_0$.
\end{lemma}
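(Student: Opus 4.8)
The plan is to reduce the statement about the measure $\om$ to a statement about the operators $\lambda(\om_x)$ in $C^*_r(G)$, exactly mirroring the structure of Definition \ref{def.c-0-state}. Write $\om = F \cdot \mu$ with $F \in L^1(G,\mu)$ (Radon–Nikodym); by density we may first assume $F$ is bounded, say $F \in L^\infty(G,\mu)$, i.e.\ $\om$ has bounded density with respect to $\mu$, and handle the general case at the end by an $L^1$-approximation together with the obvious estimate $\|\lambda(\om)\| \leq \|\om\|$. So the real content is: if $\mu$ is a $c_0$ probability measure and $F \in L^\infty(G,\mu)$, then $F\cdot\mu$ is $c_0$.

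First I would observe that it suffices to treat $F$ of a special form. Since $\mu$ is a Radon measure on the totally disconnected (hence having a basis of compact open sets) group $G$, the locally constant compactly supported functions are dense in $L^1(G,\mu)$, and any such function is a finite linear combination of indicators $1_\cU$ of compact open subsets $\cU \subseteq G$. Thus by linearity and the $L^1$-approximation argument above, it is enough to prove that $1_\cU \cdot \mu$ is $c_0$ for every compact open $\cU$. Next, a compact open $\cU$ can be written as a finite disjoint union of left cosets $x_i L$ of a single compact open subgroup $L < G$, so by linearity again we are reduced to the case $\om = 1_{xL}\cdot\mu$ for a compact open subgroup $L$ and $x \in G$. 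Now the key algebraic point: $1_{xL}\cdot\mu$ is obtained from $\mu$ by the formula $\lambda(1_{xL}\cdot\mu) = \lambda_x\, p'_L\, \lambda(\mu)$ (up to a normalizing constant $\lambda(L)$), where $p'_L$ is a convolution-type projection built from the right translations $\rho_k$, $k\in L$ — more precisely $\int_L F(g)\,d\mu(g)\cdot$(indicator adjustments) translates into multiplying $\lambda(\mu)$ on one side by the central-type averaging element and on the other by $\lambda_x$. Since $\lambda(\mu) \in C^*_r(G)$ by hypothesis and $C^*_r(G)$ is a two-sided ideal under multiplication by $\lambda_x \in M(C^*_r(G))$ and by any element of the reduced group von Neumann algebra affiliated to $C^*_r(G)$ in the appropriate sense, we get $\lambda(1_{xL}\cdot\mu) \in C^*_r(G)$, i.e.\ $1_{xL}\cdot\mu$ is $c_0$.

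The one subtlety — and the step I expect to be the main obstacle — is justifying the identity $\lambda(1_{xL}\cdot\mu) = c\,\lambda_x\, p_L\, \lambda(\mu)$ and, more importantly, checking that right multiplication by the averaging element $p_L = \lambda(L)^{-1}\int_L \lambda_k\,dk$ keeps us inside $C^*_r(G)$: this is fine because $p_L \in C^*_r(G)$ when $L$ is compact open (it is a projection of finite trace, given by a continuous compactly supported function), so $C^*_r(G)\cdot p_L \subseteq C^*_r(G)$, and left multiplication by the unitary $\lambda_x \in M(C^*_r(G))$ is likewise fine. Carrying out the computation of $\lambda(1_{xL}\cdot\mu)$ carefully is just a Fubini/change-of-variables on $G$ and should present no difficulty once set up; the routing through indicators of cosets of a \emph{single} subgroup is what makes this bookkeeping clean. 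Finally, to pass from bounded density $F$ to general $F\in L^1(G,\mu)$: pick locally constant compactly supported $F_n \to F$ in $L^1(G,\mu)$; then $\|\lambda(F\cdot\mu) - \lambda(F_n\cdot\mu)\| \leq \|(F-F_n)\cdot\mu\| = \|F-F_n\|_{L^1(\mu)} \to 0$, and since each $\lambda(F_n\cdot\mu) \in C^*_r(G)$ and $C^*_r(G)$ is norm-closed, $\lambda(F\cdot\mu)\in C^*_r(G)$, so $\om = F\cdot\mu$ is $c_0$.
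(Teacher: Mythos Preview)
Your reduction to locally constant compactly supported densities via $L^1$-approximation is fine and matches the paper. The gap is at your ``key algebraic point'': there is simply no identity of the form $\lambda(1_{xL}\cdot\mu)=c\,\lambda_x\,p_L\,\lambda(\mu)$. Computing the right-hand side gives
\[
\lambda_x\,p_L\,\lambda(\mu)=\lambda(L)^{-1}\int_L\int_G \lambda_{xkg}\,d\mu(g)\,dk,
\]
which is $\lambda(\delta_x * h_L * \mu)$ (a \emph{convolution} of $\mu$ with the Haar measure $h_L$ of $L$ and a point mass), not the pointwise restriction $1_{xL}\cdot\mu$. These differ already when $\mu$ is a point mass off $xL$. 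Switching to a right-translation average $p'_L$ does not help: that element lies in the commutant $L(G)'$, not in $M(C^*_r(G))$, so multiplying by it will not keep you inside $C^*_r(G)$. The underlying problem is structural: pointwise multiplication of a measure by a function on $G$ is not an operation you can express by left or right multiplication in the convolution algebra $C^*_r(G)$.

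The paper supplies exactly the missing device. For $F\in C_c(G)$ one has the functional $\om_F\in C^*_r(G)^*$ determined by $\om_F(\lambda_x)=F(x)$ (for $F=1_{xL}$ this is a coefficient of the regular representation, hence in $A(G)$), and using the comultiplication $\Deltah:C^*_r(G)\to M(C^*_r(G)\otimes C^*_r(G))$, $\Deltah(\lambda_x)=\lambda_x\otimes\lambda_x$, one gets
\[
\lambda(F\cdot\mu)=(\om_F\otimes\id)\Deltah(\lambda(\mu)).
\]
Since slice maps $(\om\otimes\id)\Deltah$ send $C^*_r(G)$ into $C^*_r(G)$, this immediately gives $\lambda(F\cdot\mu)\in C^*_r(G)$. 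This coproduct trick is precisely how pointwise multiplication on the measure side becomes an operation that preserves $C^*_r(G)$; without it your argument does not go through.
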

\begin{proof}
Denote by $C_c(G)$ the space of continuous compactly supported functions on $G$. Since $C_c(G) \subset L^1(G,\mu)$ is dense, it is sufficient to prove that $F \cdot \mu$ is $c_0$ for every $F \in C_c(G)$. Denote by $\om_F \in C^*_r(G)^*$ the functional determined by $\om_F(\lambda_x) = F(x)$ for all $x \in G$. Denote by $\Deltah : C^*_r(G) \recht M(C^*_r(G) \ot C^*_r(G))$ the comultiplication determined by $\Deltah(\lambda_x) = \lambda_x \ot \lambda_x$. Recall that for every $\om \in C^*_r(G)^*$ and every $T \in C^*_r(G)$, we have that $(\om \ot \id)\Deltah(T) \in C^*_r(G)$. Since
$$\lambda(F \cdot \mu) = (\om_F \ot \id)\Deltah(\lambda(\mu)) \; ,$$
the lemma is proven.
\end{proof}

\section{\boldmath Haagerup property and property (T) for $\cC(K < G)$}\label{sec.haagerup-T}

In Definition \ref{def.cp-cb-multiplier}, we already recalled the notion of a cp-multiplier $\vphi : \Irr(\cC) \recht \C$ on a rigid C$^*$-tensor category $\cC$. In terms of cp-multipliers, \emph{amenability} of a rigid C$^*$-tensor category, as defined in \cite{Po94,LR96}, amounts to the existence of finitely supported cp-multipliers $\vphi_n : \Irr(\cC) \recht \C$ that converge to $1$ pointwise, see \cite[Proposition 5.3]{PV14}. Following \cite[Definition 5.1]{PV14}, a rigid C$^*$-tensor category $\cC$ has the \emph{Haagerup property} if there exist $c_0$ cp-multipliers $\vphi_n : \Irr(\cC) \recht \C$ that converge to $1$ pointwise, while $\cC$ has \emph{property (T)} if all cp-multipliers converging to $1$ pointwise, must converge to $1$ uniformly.

Similarly, when $\cC_1$ is a full C$^*$-tensor subcategory of $\cC$, we say that $\cC_1 \subset \cC$ has the \emph{relative property (T)} if all cp-multipliers on $\cC$ converging to $1$ pointwise, must converge to $1$ uniformly on $\Irr(\cC_1) \subset \Irr(\cC)$.

We now turn back to the rigid C$^*$-tensor category $\cC$ of finite rank $G$-$L^\infty(G/K)$-$L^\infty(G/K)$-modules, where $G$ is a totally disconnected group $G$ and $K < G$ is a compact open subgroup. Note that $\Rep K$ is a full C$^*$-tensor subcategory of $\cC$, consisting of the $G$-$L^\infty(G/K)$-$L^\infty(G/K)$-modules $\cH$ with the property that $1_{xK} \cdot \cH \cdot 1_{eK}$ is zero for all $x \not\in K$.

Recall from Definition \ref{def.c-0-state} the notion of a $c_0$ complex measure on $G$. We identify the space of complex measures with $C_0(G)^*$ and we denote by $\cS(C_0(G)) \subset C_0(G)^*$ the state space of $C_0(G)$, i.e.\ the set of probability measures on $G$.

\begin{theorem}\label{thm.amenable-haagerup-T}
Let $G$ be a totally disconnected group and $K < G$ a compact open subgroup. Denote by $\cC$ the rigid C$^*$-tensor category of finite rank $G$-$L^\infty(G/K)$-$L^\infty(G/K)$-modules.
\begin{enumlist}
\item $\cC$ is amenable if and only if $G$ is amenable.
\item $\cC$ has the Haagerup property if and only if $G$ has the Haagerup property and there exists a sequence of $c_0$ probability measures $\mu_n \in \cS(C_0(G))$ such that $\mu_n \recht \delta_e$ weakly$^*$ and such that $\|\mu_n \circ \Ad x - \mu_n\| \recht 0$ uniformly on compact sets of $x \in G$.
\item $\cC$ has property (T) if and only if $G$ has property (T) and every sequence sequence of $\Ad G$-invariant probability measures $\mu_n \in \cS(C_0(G))$ that converges to $\delta_e$ weakly$^*$ must converge in norm.
\item $\Rep K \subset \cC$ has the relative property (T) if and only if every sequence of probability measures $\mu_n \in \cS(C_0(G))$ such that $\mu_n \recht \delta_e$ weakly$^*$ and $\|\mu_n \circ \Ad x - \mu_n\| \recht 0$ uniformly on compact sets of $x \in G$ satisfies $\|\mu_n - \delta_e\| \recht 0$.
\end{enumlist}
\end{theorem}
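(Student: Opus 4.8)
The plan is to transport everything, via Proposition~\ref{prop.cp-multipliers-vs-states}, to the crossed product $C_0(G)\rtimes_{\Ad}G$. Write $A^f=C_0(G)\rtimes^f_{\Ad}G$, $A^r=C_0(G)\rtimes^r_{\Ad}G$ and $q=1_Kp_K$. That proposition turns cp-multipliers $\vphi$ on $\Irr(\cC)$ into positive functionals $\om$ on $qA^fq$, the $c_0$ ones matching the $c_0$ functionals of Definition~\ref{def.c-0-state}, with $C_u(\cC)\cong qA^fq$ and $C_r(\cC)\cong qA^rq$. First I would pin down the functional $\om_0$ corresponding to the constant multiplier $\vphi\equiv1$: since $e$ is an $\Ad$-fixed point, evaluation at $e$ composed with the trivial representation of $G$ defines a character of $A^f$ with $\om_0(Fu_x)=F(e)$, and a short computation with \eqref{eq.Theta-on-fusion-algebra} confirms $\om_0(p_K\dim(\pi)^{-1}\chi_\pi u_xp_K)=1$, so $\om_0$ is the functional attached to $\vphi\equiv1$. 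Next I record the dictionary: $\vphi_n\to1$ pointwise on $\Irr(\cC)$ iff $\om_n\to\om_0$ weak$^*$; $\vphi_n\to1$ uniformly iff $\|\om_n-\om_0\|\to0$; $\vphi$ finitely supported iff $\om$ is supported on finitely many $(\pi,x)$ (hence bounded for the reduced norm); and each positive $\om$ has a GNS covariant representation $(\rho_\om,U_\om,\xi_\om)$ of $(C_0(G),\Ad,G)$ with $\om(Fu_x)=\langle\rho_\om(F)U_\om(x)\xi_\om,\xi_\om\rangle$, whose $C_0(G)$-spectral measure $\mu_\om$ is a probability measure when $\om$ is a state.

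For part~(1), by \cite[Proposition~5.3]{PV14} amenability of $\cC$ is the existence of finitely supported cp-multipliers converging pointwise to $1$, which by the dictionary is exactly that $\om_0$ extends to a state on $qA^rq$, i.e.\ that the trivial character of $C_u(\cC)$ factors through $C_r(\cC)$. Evaluation at the fixed point $e$ gives compatible surjections $A^f\twoheadrightarrow C^*_f(G)$ and $A^r\twoheadrightarrow C^*_r(G)$ through which $\om_0$ factors with $q\mapsto p_K\neq0$; so this holds iff the trivial representation of $G$ survives in the reduced Hecke algebra $p_KC^*_r(G)p_K$, iff $1_G$ is weakly contained in the quasi-regular representation on $L^2(K\backslash G)$, iff (using that $K$ is compact open) $1_G\prec\lambda_G$, iff $G$ is amenable; conversely $G$ amenable forces $A^f=A^r$, hence $C_u(\cC)=C_r(\cC)$.

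For part~(2), in the ``if'' direction I argue constructively. Given $c_0$ probability measures $\mu_n\to\delta_e$ with $\|\mu_n\circ\Ad x-\mu_n\|\to0$ uniformly on compacta, and $c_0$ positive-definite functions $\psi_m$ on $G$ with $\psi_m\to1$ uniformly on compacta (which exist precisely when $G$ has the Haagerup property), I form $\om_{m,n}(Fu_x)=\psi_m(x)\langle\rho_n(F)U_n(x)\xi_n,\xi_n\rangle$ for a covariant representation $(\rho_n,U_n,\xi_n)$ of $(C_0(G),\Ad,G)$ whose vector has spectral measure $\mu_n$; positivity follows by tensoring with the GNS representation of $\psi_m$ (Fell absorption also keeps the $G$-part tempered), $c_0$-ness follows since $\psi_m$ is $c_0$, and $\om_{m,n}\to\om_0$ as $m,n\to\infty$ because $\psi_m\to1$ while the approximate $\Ad$-invariance of $\mu_n$ forces $\langle\rho_n(F)U_n(x)\xi_n,\xi_n\rangle\to F(e)$ uniformly on compacta of $x$; compressing to $qA^fq$ and diagonalising gives the desired $c_0$ cp-multipliers. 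In the ``only if'' direction, from $c_0$ functionals $\om_n\to\om_0$ I read off $\mu_n:=\mu_{\om_n}$, which is a $c_0$ probability measure (Definition~\ref{def.c-0-state} with $x=e$), converges weak$^*$ to $\delta_e$, and satisfies $\|\mu_n\circ\Ad x-\mu_n\|\to0$ uniformly on compacta, because $\om_n(u_x)\to1$ forces $\|U_n(x)\xi_n-\xi_n\|\to0$ uniformly on compacta and the covariance relation $U_n(x)\rho_n(F)U_n(x)^*=\rho_n(\al_x(F))$ transfers this to $\mu_n$; that $G$ itself has the Haagerup property is extracted by restricting the $c_0$ multipliers to the Hecke subquotient $\mathcal H(G,K)$ of the fusion algebra of $\cC$ (cf.\ the introduction), producing $c_0$ positive-definite functions on $K\backslash G/K$, hence on $G$, tending to $1$.

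Parts~(3) and~(4) use the same GNS machinery in the rigid regime. For (3): if $G$ has property~(T) and the stated rigidity holds, then any cp-multipliers $\vphi_n\to1$ pointwise give $\om_n\to\om_0$ weak$^*$; property~(T) of $G$ (quantitatively, via a Kazhdan constant) lets me replace $\om_n$ by a norm-nearby $\om_n'$ whose GNS vector is genuinely $G$-invariant, so $\mu_{\om_n'}$ is $\Ad G$-invariant and $\to\delta_e$ weak$^*$, whence $\|\mu_{\om_n'}-\delta_e\|\to0$, and feeding this back through property~(T) yields $\|\om_n-\om_0\|\to0$, i.e.\ $\cC$ has property~(T); the converse extracts property~(T) of $G$ from the Hecke subquotient and the rigidity of $\Ad G$-invariant measures from the invariant GNS vectors as above. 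For (4), uniformity is required only on $\Irr(\Rep K)$, which under $\Theta$ is the part with $x=e$; so the property~(T) input for $G$ drops out and one is left exactly with the measure statement, for an arbitrary (not necessarily $\Ad G$-invariant) approximating sequence. The two points I expect to be genuinely delicate are: (i) deducing honest positivity of the combined functionals $\om_{m,n}$ from only \emph{approximate} $\Ad$-invariance of $\mu_n$, which will likely require first averaging $\mu_n$ over a compact open subgroup (using Lemma~\ref{lem.ac-c0} to keep $c_0$-ness) and then controlling the positivity defect; and (ii) in the ``only if'' direction of (2)--(3), making precise the passage to the Hecke subquotient and the almost-invariant-to-invariant upgrade of GNS vectors without spoiling pointwise convergence.
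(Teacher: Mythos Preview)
Your overall strategy matches the paper's: translate everything to positive functionals on $q(C_0(G)\rtimes^f_{\Ad}G)q$ via Proposition~\ref{prop.cp-multipliers-vs-states}, and analyse their GNS data. Parts~(1), (3) and~(4) are essentially correct and close to the paper's argument (for~(3) the paper also uses the Kazhdan projection to upgrade almost-invariant to invariant GNS vectors). The substantive issue is in the ``if'' direction of part~(2).

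You write ``for a covariant representation $(\rho_n,U_n,\xi_n)$ whose vector has spectral measure $\mu_n$'', but such a triple does not come for free: a covariant representation on $L^2$ of a measure requires the measure to be quasi-invariant under $\Ad G$, and your $\mu_n$ is only \emph{approximately} invariant. Your flagged point~(i) identifies the difficulty, but the proposed fix---averaging over a compact open subgroup and then ``controlling the positivity defect''---does not work: averaging over $K$ does not produce $\Ad G$-quasi-invariance, and there is no small positivity defect to control, because an approximately invariant measure simply does not yield a positive functional on the crossed product. The paper's key trick is different: after restricting $\mu_n$ to $K$ and making it $\Ad K$-invariant, one forms $\widetilde\mu_n=\int_G w(g)\,\mu_n\circ\Ad g\,dg$ for a strictly positive $w\in L^1(G)$, which is genuinely $\Ad G$-quasi-invariant, so the Radon--Nikodym cocycle gives a true unitary representation $\theta_n$ on $L^2(G,\widetilde\mu_n)$; the vector $\zeta_n=(d\mu_n/d\widetilde\mu_n)^{1/2}$ then has spectral measure $\mu_n$, and tensoring with the Haagerup representation of $G$ gives an honestly positive $\om_n$. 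Lemma~\ref{lem.ac-c0} is then needed not just to preserve $c_0$-ness under averaging, but to verify the \emph{first} half of Definition~\ref{def.c-0-state} (that each $\mu_n(x)$ is $c_0$), which your argument omits---saying ``$c_0$-ness follows since $\psi_m$ is $c_0$'' only handles the decay in $x$.

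A smaller point: in the ``only if'' direction of~(2), extracting the Haagerup property of $G$ ``via the Hecke subquotient'' is vague. The paper instead observes directly that $x\mapsto\om_n(u_x)$ is positive definite on $G$ and equals $\counit_K(\lambda(\mu_n(x)))$, which is $c_0$ because $\om_n$ is.
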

\begin{proof}
Denote by $\counit : C_0(G) \rtimes^f_{\Ad} G \recht \C$ the character given by $\counit(F \lambda_f) = F(e) \int_G f(x) \, dx$. Write $q = 1_K p_K$.

1. Combining Proposition \ref{prop.cp-multipliers-vs-states} and \cite[Proposition 5.3]{PV14}, we get that $\cC$ is amenable if and only if the canonical $*$-homomorphism $q (C_0(G) \rtimes^f_{\Ad} G) q \recht q (C_0(G) \rtimes^r_{\Ad} G) q$ is an isomorphism. This holds if and only if $G$ is amenable.

2. First assume that $\cC$ has the Haagerup property. By Proposition \ref{prop.cp-multipliers-vs-states}, we find a sequence of states $\om_n$ on $q (C_0(G) \rtimes^f_{\Ad} G) q$ such that $\om_n \recht \counit$ weakly$^*$ and such that every $\om_n$ is a $c_0$ state in the sense of Definition \ref{def.c-0-state}. For every $x \in G$, define $\mu_n(x) \in C_0(G)^*$ given by $\mu_n(x)(F) = \om_n(F u_x)$.

Using the strictly continuous extension of $\om_n$ to the multiplier algebra $M(C_0(G) \rtimes^f_{\Ad} G)$, we get that $x \mapsto \om_n(u_x)$ is a sequence of continuous positive definite functions converging to $1$ uniformly on compact subsets of $G$. We claim that for every fixed $n$, the function $x \mapsto \om_n(x)$ tends to $0$ at infinity. Denote by $\counit_K : C^*_r(G) \recht \C$ the state given by composing the conditional expectation $C^*_r(G) \recht C^*_r(K)$ with the trivial representation $\counit : C^*_r(K) \recht \C$. Then,
$$\om_n(x) = \counit_K(\lambda(\mu_n(x)))$$
and the claim is proven. So, $G$ has the Haagerup property.

The restriction of $\om_n$ to $C_0(G)$ provides a sequence of $c_0$ probability measures $\mu_n \in \cS(C_0(G))$ such that $\mu_n \recht \delta_e$ weakly$^*$ and $\|\mu_n \circ \Ad x - \mu_n\| \recht 0$ uniformly on compact sets of $x \in G$.

Conversely assume that $G$ has the Haagerup property and that $\mu_n$ is such a sequence of probability measures. By restricting $\mu_n$ to $K$, normalizing and integrating $\int_K (\mu_n \circ \Ad k ) \, dk$, we may assume that the probability measures $\mu_n$ are supported on $K$ and are $\Ad K$-invariant. Fix a strictly positive right $K$-invariant function $w : G \recht \R_0^+$ with $\int_G w(g) \, dg = 1$. Define the probability measures $\mutil_n$ on $G$ given by
$$\mutil_n = \int_G w(g) \, \mu_n \circ \Ad g \; dg \; .$$
Note that $\mutil_n$ is still $\Ad K$-invariant. Also,
$$\lambda(\mutil_n) = \int_G w(g) \, \lambda_g^* \, \lambda(\mu_n) \, \lambda_g \; dg$$
so that each $\mutil_n$ is a $c_0$ probability measure.

By construction, for every $x \in G$, the measure $\mutil_n \circ \Ad x$ is absolutely continuous with respect to $\mutil_n$. We denote by $\Delta_n(x)$ the Radon-Nikodym derivative and define the unitary representations
$$\theta_n : G \recht \cU(L^2(G,\mutil_n)) : \theta_n(x) \xi = \Delta_n(x)^{1/2} \, \xi \circ \Ad x^{-1} \; .$$
We also define $\theta_n : C_0(G) \recht B(L^2(G,\mutil_n))$ given by multiplication operators and we have thus defined a nondegenerate $*$-representation of $C_0(G) \rtimes^f_{\Ad} G$ on $L^2(G,\mutil_n)$.

Note that $\mu_n$ is absolutely continuous with respect to $\mutil_n$. We denote by $\zeta_n \in L^2(G,\mutil_n)$ the square root of the Radon-Nikodym derivative of $\mu_n$ with respect to $\mutil_n$. Since both $\mu_n$ and $\mutil_n$ are $\Ad K$-invariant, we get that $\theta_n(p_K) \zeta_n = \zeta_n$. Since $\mu_n$ is supported on $K$, also $\zeta_n$ is supported on $K$ meaning that $\theta(1_K)\zeta_n = \zeta_n$.

Since $G$ has the Haagerup property, we can also fix a unitary representation $\pi : G \recht \cU(\cH)$ and a sequence of $\pi(K)$-invariant unit vectors $\xi_n \in \cH$ such that $\|\pi(x) \xi_n - \xi_n \| \recht 0$ uniformly on compact sets of $x \in G$ and, for every fixed $n$, the function $x \mapsto \langle \pi(x) \xi_n,\xi_n \rangle$ tends to zero at infinity.

The formulas $\psi(x) = \theta_n(x) \ot \pi(x)$ and $\psi(F) = \theta(F) \ot 1$ define a nondegenerate $*$-representation of $C_0(G) \rtimes^f_{\Ad} G$ on $L^2(G,\mutil_n) \ot \cH$. We define the states $\om_n$ on $C_0(G) \rtimes^f_{\Ad} G$ given by $\om_n(T) = \langle \psi(T) (\zeta_n \ot \xi_n) , \zeta_n \ot \xi_n \rangle$. By construction, $\om_n(q) = 1$ for all $n$ and $\om_n \recht \counit$ weakly$^*$. It remains to prove that each $\om_n$ is a $c_0$ state. Proposition \ref{prop.cp-multipliers-vs-states} then gives that $\cC$ has the Haagerup property.

Fix $n$. Defining $\mu_n(x) \in C_0(G)^*$ given by $\mu_n(x)(F) = \om_n(F u_x)$, we get that
$$\mu_n(x)(F) = \langle \theta_n(F) \, \theta(x) \, \zeta_n , \zeta_n \rangle \; \langle \pi(x) \xi_n, \xi_n \rangle \; .$$
Since the function $x \mapsto \langle \pi(x) \xi_n, \xi_n \rangle$ tends to zero at infinity, we get that even $x \mapsto \|\mu_n(x)\|$ tends to zero at infinity. So, we only have to show that for every fixed $x$, the complex measure given by $F \mapsto \langle \theta_n(F) \, \theta(x) \, \zeta_n , \zeta_n \rangle$ is $c_0$. By construction, this complex measure is absolutely continuous with respect to $\mutil_n$. The conclusion then follows from Lemma \ref{lem.ac-c0}.

3. Note that it follows from \cite[Proposition 5.5]{PV14} that $\cC$ has property (T) if and only if every sequence of states on $q(C_0(G) \rtimes^f_{\Ad} G)q$ converging weakly$^*$ to $\counit$ must converge to $\counit$ in norm.

First assume that $\cC$ has property~(T). Both states on $C^*(G)$ and $\Ad G$-invariant states on $C_0(G)$ give rise to states on $C_0(G) \rtimes^f_{\Ad} G$. One implication of 3 thus follows immediately. Conversely assume that $G$ has property (T) and that every sequence of $\Ad G$-invariant probability measures $\mu_n \in \cS(C_0(G))$ converging weakly$^*$ to $\delta_e$ must converge in norm to $\delta_e$. Let $\om_n$ be a sequence of states on $q(C_0(G) \rtimes^f_{\Ad} G)q$ converging to $\counit$ weakly$^*$. Let $p \in C^*(G)$ be the Kazhdan projection. Replacing $\om_n$ by $\om_n(p)^{-1} \, p \cdot \om_n \cdot p$, we may assume that $\om_n$ is left and right $G$-invariant. This means that $\om_n(F u_x) = \mu_n(F)$ for all $F \in C_0(G)$, $x \in G$, where $\mu_n$ is a sequence of $\Ad G$-invariant probability measures on $G$ converging weakly$^*$ to $\delta_e$. Thus $\|\mu_n - \delta_e\| \recht 0$ so that $\|\om_n - \counit\| \recht 0$.

4. First assume that $\Rep K \subset \cC$ has the relative property (T) and take a sequence of probability measures $\mu_n \in \cS(C_0(G))$ such that $\mu_n \recht \delta_e$ weakly$^*$ and $\|\mu_n \circ \Ad x - \mu_n\| \recht 0$ uniformly on compact sets of $x \in G$. We must prove that $\|\mu_n - \delta_e\| \recht 0$. As in the proof of 2, we may assume that $\mu_n$ is supported on $K$ and that $\mu_n$ is $\Ad K$-invariant, so that we can construct a sequence of states $\om_n$ on $C_0(G) \rtimes^f_{\Ad} G$ such that $\om_n \recht \delta_e$ weakly$^*$, $\om_n = q \cdot \om_n \cdot q$ and $\om_n|_{C_0(G)} = \mu_n$ for all $n$.

The formula \eqref{eq.formula-cp-mult-vs-states} associates to $\om_n$ a sequence of cp-multipliers $\vphi_n$ on $\cC$ converging to $1$ pointwise. Since $\Rep K \subset \cC$ has the relative property~(T), we conclude that $\vphi_n(\pi,e) \recht 1$ uniformly on $\pi \in \Irr(K)$. Using \cite[Lemma 5.6]{PV14}, it follows that $\|\om_n|_{C_0(G)} - \delta_e\| \recht 0$. So, $\|\mu_n - \delta_e\| \recht 0$.

To prove the converse, let $\vphi_n : \Irr(\cC) \recht \C$ be a sequence of cp-multipliers on $\cC$ converging to $1$ pointwise. Denote by $\om_n$ the states on $q(C_0(G) \rtimes^f_{\Ad} G)q$ associated with $\vphi_n$ in Proposition \ref{prop.cp-multipliers-vs-states}. Since $\om_n \recht \counit$ weakly$^*$, the restriction $\mu_n := \om_n|_{C_0(G)}$ is a sequence of probability measures on $G$ such that $\mu_n \recht \delta_e$ weakly$^*$ and $\|\mu_n \circ \Ad x - \mu_n\| \recht 0$ uniformly on compact sets of $x \in G$. By our assumption, $\|\mu_n - \delta_e\| \recht 0$. For every $\pi \in \Irr(K)$, the function $\dim(\pi)^{-1} \chi_\pi$ has norm $1$. Therefore, $\om_n(\dim(\pi)^{-1} \chi_\pi) \recht 1$ uniformly on $\Irr(K)$. By \eqref{eq.formula-cp-mult-vs-states}, this means that $\vphi_n \recht 1$ uniformly on $\Irr(K)$.
\end{proof}

The following proposition gives a concrete example where $G$ has the Haagerup property, while $\cC(K < G)$ does not and even has $\Rep K$ as a full C$^*$-tensor subcategory with the relative property~(T).

\begin{proposition}\label{prop.example-SL-k-F}
Let $F$ be a non-archimedean local field with characteristic $\neq 2$. Let $k \geq 2$ and define $G = \SL(k,F)$. Let $K < G$ be a compact open subgroup, e.g.\ $K = \SL(k,\cO)$, where $\cO$ is the ring of integers of $F$. Denote by $\cC$ the rigid C$^*$-tensor category of finite rank $G$-$L^\infty(G/K)$-$L^\infty(G/K)$-modules.
\begin{enumlist}
\item $\Rep K \subset \cC$ has the relative property~(T). In particular, $\cC$ does not have the Haagerup property, although for $k=2$, the group $G$ has the Haagerup property.
\item $\cC$ has property~(T) for all $k \geq 3$.
\end{enumlist}
\end{proposition}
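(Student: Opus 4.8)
The plan is to read off both statements from Theorem~\ref{thm.amenable-haagerup-T}, so that everything reduces to facts about (almost) $\Ad$-invariant probability measures on $G=\SL(k,F)$.

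\emph{Part (2) and the ``in particular'' in (1).} For $k\ge 3$ the group $G=\SL(k,F)$ has Kazhdan's property~(T) (higher $F$-rank, any characteristic), so by Theorem~\ref{thm.amenable-haagerup-T}(3) it suffices to see that every sequence $(\mu_n)$ of \emph{$\Ad G$-invariant} probability measures with $\mu_n\recht\delta_e$ weakly$^*$ converges in norm. This I would obtain from the observation that every $\Ad G$-invariant probability measure on $G$ is supported on the finite center $Z(G)=\mu_k(F)\cdot I$: a non-central element $c$ has a proper centralizer $Z_G(c)$, hence an infinite conjugacy class of infinite $G$-invariant volume (checking the cases $c$ regular semisimple, $c$ with repeated eigenvalues, $c$ non-semisimple), so it carries no mass; thus $\mu_n=\sum_{c\in\mu_k(F)}a^{(n)}_c\,\delta_{cI}$, and weak$^*$ convergence to $\delta_e$ forces $a^{(n)}_e\recht1$, i.e.\ $\|\mu_n-\delta_e\|\recht0$. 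The ``in particular'' in (1) is then formal: once $\Rep K\subset\cC$ has the relative property~(T), any cp-multipliers on $\cC$ converging pointwise to $1$ converge to $1$ uniformly on $\Irr(\Rep K)=\Irr(K)$; since $K$ is an infinite profinite group $\Irr(K)$ is infinite, and no $c_0$ function on $\Irr(\cC)$ can stay within $\tfrac12$ of $1$ on all of $\Irr(K)$, so $\cC$ has no Haagerup property; yet for $k=2$ the group $\SL(2,F)$ acts properly on its locally finite Bruhat--Tits tree, hence has the Haagerup property.

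\emph{Part (1): the relative property~(T).} By Theorem~\ref{thm.amenable-haagerup-T}(4) I must show that if $\mu_n\recht\delta_e$ weakly$^*$ and $\|\mu_n\circ\Ad x-\mu_n\|\recht0$ uniformly on compact sets of $x$, then $\mu_n(\{e\})\recht1$. First I linearize near $e$: the map $g\mapsto g-I$ is a homeomorphism of a compact open neighbourhood $I+\Omega$ of $e$ onto $\Omega\cap S$ with $S=\{X\in M_k(F):\det(I+X)=1\}$, carrying conjugation on $G$ to the linear action $X\mapsto gXg^{-1}$ on $M_k(F)$; no exponential map is involved, so this works in every characteristic. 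Since $\mu_n\recht\delta_e$, a standard cutting-off argument near $e$ (using that $S$ is $\Ad$-invariant and that $\Ad(Q)$ sends small neighbourhoods of $0$ into slightly larger ones) transports the $\mu_n$ to probability measures $\nu_n$ supported near $0$ in $S$, still asymptotically $\Ad$-invariant, still $\recht\delta_0$ weakly$^*$, with $\nu_n(\{0\})\recht1$ equivalent to $\mu_n(\{e\})\recht1$. Near $0$ the variety $S$ is an $\Ad$-equivariant graph over a neighbourhood of $0$ in $\mathfrak{sl}(k,F)$ --- the graphing function is built from the conjugation-invariant polynomial $X\mapsto\det(I+X)$ and is therefore $\Ad$-invariant --- at least when $\mathrm{char}(F)\nmid k$; for $k=2$ this is exactly $\mathrm{char}(F)\neq2$. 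Composing with the graph projection gives probability measures $\rho_n$ near $0$ in $\mathfrak{sl}(k,F)$ that are asymptotically $\Ad$-invariant and converge weakly$^*$ to $\delta_0$, with $\rho_n(\{0\})\recht1$ still the goal.

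\emph{From affine relative~(T) to the measures.} Here I would invoke that $(\SL(k,F)\ltimes\mathfrak{sl}(k,F),\,\mathfrak{sl}(k,F))$ has the relative property~(T): when $\mathrm{char}(F)\nmid k$ the adjoint representation is irreducible and non-trivial, so $\SL(k,F)$ has no invariant probability measure on $\mathbb{P}(\mathfrak{sl}(k,F))$, which is the standard criterion for relative~(T) of the associated affine group (Burger's theorem for $\SL(2,F)\ltimes F^2$, and its extension to arbitrary non-trivial irreducibles by de~Cornulier and Shalom). Fixing a non-trivial character of $F$ and the trace form (nondegenerate on $\mathfrak{sl}(k,F)$ precisely when $\mathrm{char}(F)\nmid k$), identify $\widehat{\mathfrak{sl}(k,F)}\cong\mathfrak{sl}(k,F)$ equivariantly for $\Ad$, and form the unitary representation $\sigma_n$ of $H=\SL(k,F)\ltimes\mathfrak{sl}(k,F)$ on $L^2(\widehat{\mathfrak{sl}(k,F)},\rho_n)$ where $\mathfrak{sl}(k,F)$ acts by multiplication by characters and $\SL(k,F)$ acts via $\Ad$ times the square root of the Radon--Nikodym cocycle. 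The constant function $\xi_n=1$ is a unit vector, almost invariant under a fixed compact neighbourhood of $0$ because $\rho_n\recht\delta_0$ weakly$^*$, and almost invariant under any fixed compact subset of $\SL(k,F)$ because there $\|g_*\rho_n-\rho_n\|\recht0$ and $\|\sigma_n(g)\xi_n-\xi_n\|^2\le\|g_*\rho_n-\rho_n\|$ by the Bhattacharyya bound. The quantitative form of relative~(T) then gives, for every $\eps>0$ and all large $n$, an $\mathfrak{sl}(k,F)$-invariant vector within $\eps$ of $\xi_n$; such a vector is supported on $\{0\}$, so $1-\rho_n(\{0\})<\eps^2$. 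Hence $\rho_n(\{0\})\recht1$, and unwinding the transfers gives $\mu_n(\{e\})\recht1$, which is part~(1).

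\emph{Expected main obstacle.} The technical heart is meshing the linearization with the affine relative~(T). The one point needing genuine care is that when $\mathrm{char}(F)$ divides $k$ the adjoint representation acquires the trivial subrepresentation $F\cdot I$ and the clean graph description of $S$ over $\mathfrak{sl}(k,F)$ degrades, so for those pairs $(k,\mathrm{char}(F))$ --- none with $k=2$ --- one would instead work directly with $S$ near $0$ and exploit that $S$ meets the ``trivial direction'' $\{tI\}$ only at the origin to still force concentration at $0$. Everything else is routine: the two reductions to Theorem~\ref{thm.amenable-haagerup-T}, the cutting-off near $e$, and the passage between almost-invariant vectors and almost-invariant measures.
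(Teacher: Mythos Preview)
Your strategy is sound but takes a genuinely different route from the paper. The paper argues by contradiction: after stripping atoms at the center, it passes to a weak$^*$ limit point of the $\mu_n$ in the dual of the bounded Borel functions on $G$, obtaining an $\Ad G$-invariant \emph{mean} on $M_k(F)$, and then proves a self-contained matrix lemma (Lemma~\ref{lem.invariant-mean-SLn}) showing that any such mean is supported on $F\cdot I$. That lemma is established by an explicit computation with unipotent and diagonal conjugation in $2\times 2$ blocks, using only the mean-invariance result for shears on $F^2$ from \cite[Proposition~1.4.12]{BHV08}. This handles parts (1) and (2) simultaneously and uniformly for all $k\ge 2$ with $\mathrm{char}(F)\ne 2$, with no case distinction on whether $\mathrm{char}(F)\mid k$. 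Your approach is more structural---linearize and import the relative property~(T) of $(\SL(k,F)\ltimes\mathfrak{sl}(k,F),\mathfrak{sl}(k,F))$ as a black box---which is conceptually appealing but less elementary.

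Two points in your write-up need attention. First, the unitary representation of $\SL(k,F)$ on $L^2(\mathfrak{sl}(k,F),\rho_n)$ ``via $\Ad$ times the square root of the Radon--Nikodym cocycle'' is only defined when $g_*\rho_n\ll\rho_n$ for every $g$, which is not automatic for an arbitrary asymptotically invariant $\rho_n$. You must first smooth $\rho_n$ into a quasi-invariant measure (for instance by integrating $g_*\rho_n$ against a strictly positive weight over $g$, exactly as the paper does in the Haagerup direction of Theorem~\ref{thm.amenable-haagerup-T}(2)); this is routine but cannot be omitted. Second, your graphing of $S=\{X:\det(I+X)=1\}$ over $\mathfrak{sl}(k,F)$, the nondegeneracy of the trace form, and the irreducibility of the adjoint representation all genuinely break when $\mathrm{char}(F)\mid k$. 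Your acknowledgment of this is honest, but the suggested workaround (``work directly with $S$'') is not an argument; as written, part~(1) is fully justified only when $\mathrm{char}(F)\nmid k$. This does cover $k=2$ under the standing hypothesis, but for general $k\ge 3$ it leaves a hole that the paper's mean-based argument simply does not have.
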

\begin{proof}
We denote by $\bI$ the identity element of $G = \SL(k,F)$. Let $\mu_n \in \cS(C_0(G))$ be a sequence of probability measures on $G$ such that $\mu_n \recht \delta_\bI$ weakly$^*$ and $\|\mu_n \circ \Ad x - \mu_n\| \recht 0$ uniformly on compact sets of $x \in G$. Assume that $\|\mu_n - \delta_\bI\| \not\recht 0$. Passing to a subsequence and replacing $\mu_n$ by the normalization of $\mu_n - \mu_n(\{\bI\}) \delta_\bI$, we may assume that $\mu_n(\{\bI\}) = 0$ for all $n$. Since $\mu_n \recht \delta_\bI$ weakly$^*$ and since there are at most $k$ of $k$'th roots of unity in $F$, we may also assume that $\mu_n(\{\lambda \bI\}) = 0$ for all $n$ and all $k$'th roots of unity $\lambda \in F$.

Every $\mu_n$ defines a state $\Om_n$ on the C$^*$-algebra $\cL(G)$ of all bounded Borel functions on $G$. Choose a weak$^*$-limit point $\Om \in \cL(G)^*$ of the sequence $(\Om_n)$. Then, $\Om$ induces an $\Ad G$-invariant mean on the Borel sets of $G$. In particular, $\Om$ defines an $\Ad G$-invariant mean $\Om$ on the Borel sets of the space $M_n(F)$ of $n \times n$ matrices over $F$. By Lemma \ref{lem.invariant-mean-SLn} below, $\Om$ is supported on the diagonal $F \bI \subset M_n(F)$. Since $\Om$ is also supported on $G$, it follows that $\Om$ is supported on the finite set of $\lambda \bI$ where $\lambda$ is a $k$'th root of unity in $F$. But by construction, $\Om(\{\lambda \bI\}) = 0$ for all $k$'th roots of unity $\lambda \in F$. We have reached a contradiction. So, $\|\mu_n - \delta_\bI\| \recht 0$.

By Theorem \ref{thm.amenable-haagerup-T}, $\Rep K \subset \cC$ has the relative property~(T). For $k \geq 3$, the group $\SL(k,F)$ has property~(T) and it follows from Theorem \ref{thm.amenable-haagerup-T} that $\cC$ has property~(T).
\end{proof}

The following example of \cite{dC05} illustrates that $G$ may have property~(T), while the category $\cC$ of finite rank $G$-$L^\infty(G/K)$-$L^\infty(G/K)$-modules does not.

\begin{example}
Let $F$ be a non-archimedean local field and $k \geq 3$. Define the closed subgroup $G < \SL(k+2,F)$ given by
$$G = \left\{ \begin{pmatrix} 1 & b_1 & \cdots & b_k & c \\ 0 & a_{11} & \cdots & a_{1k} & d_1 \\ \vdots & \vdots &  & \vdots & \vdots \\ 0 & a_{k1} & \cdots & a_{kk} & d_k \\ 0 & 0 & \cdots & 0 & 1\end{pmatrix} \middle| A = (a_{ij}) \in \SL(k,F) \; , \; b_i , c , d_j \in F \right\} \; .$$
As in \cite{dC05}, we get that $G$ has property~(T). Also, the center of $G$ is isomorphic with $F$ (sitting in the upper right corner) and since $F$ is non discrete, we can take a sequence $g_n \in \cZ(G)$ with $g_n \neq e$ for all $n$ and $g_n \recht e$. Using the $\Ad G$-invariant probability measures $\delta_{g_n}$, it follows from Theorem \ref{thm.amenable-haagerup-T} that $\cC$ does not have property~(T).
\end{example}

Finally, we also include a nonamenable example having the Haagerup property.

\begin{example}\label{ex.Baumslag-Solitar}
Let $2 \leq |m| < n$ be integers. Define the totally disconnected compact abelian group $K = \Z_{nm}$ as the profinite completion of $\Z$ with respect to the decreasing sequence of finite index subgroups $(n^k m^k \Z)_{k \geq 0}$. We have open subgroups $m K < K$ and $n K < K$, as well as the isomorphism $\vphi : m K \recht n K : \vphi(m k) = n k$ for all $k \in K$. We define $G$ as the HNN extension of $K$ and $\vphi$. Alternatively, we may view $K < G$ as the Schlichting completion of the Baumslag-Solitar group
$$B(m,n) = \langle a, t \mid t a^m t^{-1} = a^n \rangle$$
and the almost normal subgroup $\langle a \rangle$.

Since $G$ is acting properly on a tree, $G$ has the Haagerup property. Also, $G$ is nonamenable. For all positive integers $k,l \geq 0$, we denote by $\mu_{k,l}$ the normalized Haar measure on the open subgroup $n^k m^l K$. Note that $\vphi_*(\mu_{k,l}) = \mu_{k+1,l-1}$ whenever $k,l \geq 1$. Then the probability measures
$$\mu_n := \frac{1}{n+1} \sum_{k=0}^n \mu_{n+k,2n-k}$$
are absolutely continuous with respect to the Haar measure of $G$, and thus $c_0$ in the sense of Definition \ref{def.c-0-state}, and they satisfy $\mu_n \recht \delta_e$ weakly$^*$ and $\|\mu_n \circ \Ad x - \mu_n\| \recht 0$ uniformly on compact sets of $x \in G$. By Theorem \ref{thm.amenable-haagerup-T}, $\cC$ has the Haagerup property.
\end{example}

\begin{lemma}\label{lem.invariant-mean-SLn}
Let $F$ be a local field with characteristic $\neq 2$. Let $k \geq 2$ and define $G = \SL(k,F)$. Every $\Ad G$-invariant mean on the Borel sets of the space $M_k(F)$ of $k \times k$ matrices over $F$ is supported on the diagonal $F \bI \subset M_k(F)$.
\end{lemma}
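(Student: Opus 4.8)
The plan is to reduce to $k=2$ and then analyse the adjoint action of $\SL(2,F)$ on $\mathfrak{sl}(2,F)$, pushed down to projective space.

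\emph{Reduction to $k=2$.} First I would observe that for each pair $1\le i<j\le k$ the map $\Phi_{ij}\colon M_k(F)\recht M_2(F)$ extracting the principal submatrix on the rows and columns $i,j$ is equivariant for conjugation by the copy of $\SL(2,F)$ acting on the coordinates $i,j$ (a direct block computation gives $\Phi_{ij}(gAg^{-1})=g\,\Phi_{ij}(A)\,g^{-1}$ for such $g$). Hence, for an $\Ad\SL(k,F)$-invariant mean $\Om$ on $M_k(F)$, the pushforward $(\Phi_{ij})_*\Om$ is an $\Ad\SL(2,F)$-invariant mean on $M_2(F)$; granting the lemma for $k=2$, it is supported on $F\bI_2$, so $\Om$ gives mass $1$ to $\{A: a_{ij}=a_{ji}=0,\ a_{ii}=a_{jj}\}$. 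As a mean is finitely additive and there are finitely many pairs, $\Om$ gives mass $1$ to the intersection of these sets, which is exactly $F\bI_k$.

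\emph{The case $k=2$.} Using $\mathrm{char}\,F\ne2$, I would write $A=\tfrac12\mathrm{tr}(A)\bI+A_0$ with $A_0\in\mathfrak{sl}(2,F)$; the map $A\mapsto A_0$ is $\SL(2,F)$-equivariant and vanishes exactly on $F\bI$, so it suffices to show that every $\Ad\SL(2,F)$-invariant mean $\nu$ on $\mathfrak{sl}(2,F)$ has $\nu(\{0\})=1$. If it did not, I would normalise $\nu$ on $\mathfrak{sl}(2,F)\setminus\{0\}$ and push it forward along the equivariant projection to obtain an $\SL(2,F)$-invariant mean $\bar\nu$ on $\mathbb{P}(\mathfrak{sl}(2,F))$ for the projectivised adjoint action. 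The crucial point is then that $\mathbb{P}(\mathfrak{sl}(2,F))$ is a \emph{finite} union of $\SL(2,F)$-orbits: the projectivised nilpotent cone (one orbit, $\SL(2,F)$-equivariantly isomorphic to $\mathbb{P}^1(F)$, with parabolic stabiliser), the split regular semisimple lines (one orbit, with stabiliser the normaliser of a split torus), and the elliptic regular semisimple lines (finitely many orbits, with compact stabiliser — only finitely many because $\mathrm{char}\,F\ne2$ makes $F^\times/(F^\times)^2$, hence the set of quadratic extensions of $F$, finite). A mean on a finite disjoint union of Borel pieces charges one of them, so $\bar\nu$ restricts and normalises to an $\SL(2,F)$-invariant mean on one of these orbits. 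In the first two cases, projecting equivariantly onto $\mathbb{P}^1(F)$ (directly, or first over the index-two split torus) produces an $\SL(2,F)$-invariant probability measure on $\mathbb{P}^1(F)$, which cannot exist since the action $\SL(2,F)\actson\mathbb{P}^1(F)$ is minimal and strongly proximal with no fixed point; in the remaining case one gets an invariant mean on $\SL(2,F)/(\text{compact})$, which by averaging over the compact stabiliser yields a left-invariant mean on $\SL(2,F)$, impossible as $\SL(2,F)$ contains a nonabelian free group. This contradiction gives $\nu(\{0\})=1$, proving the lemma.

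\emph{Main obstacle.} The reduction to $k=2$ is essentially bookkeeping; the substance is the $k=2$ step, and within it the orbit decomposition of $\mathbb{P}(\mathfrak{sl}(2,F))$ together with the identification of the stabilisers as parabolic or compact. This is exactly where $\mathrm{char}\,F\ne2$ is used, both for the splitting $M_2(F)=F\bI\oplus\mathfrak{sl}(2,F)$ and for the finiteness of the elliptic orbits. The remaining ingredients (strong proximality of $\SL(2,F)\actson\mathbb{P}^1(F)$ and non-amenability of $\SL(2,F)$) are standard, and routing the noncompact-stabiliser orbits through $\mathbb{P}^1(F)$ keeps all the pushforward means on Borel $\sigma$-algebras, so no measurability subtleties remain.
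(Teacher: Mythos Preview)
Your reduction to $k=2$ is the same as the paper's. For $k=2$, however, the paper takes a quite different, elementary route: it uses an explicit fact from \cite{BHV08} about means on $F^2$ invariant under the unipotent shears $(x,y)\mapsto(x+\lambda y,y)$, pushes this through the map $\left(\begin{smallmatrix}a&b\\c&d\end{smallmatrix}\right)\mapsto(a-d,c)$ (which intertwines $\Ad g_\lambda$ with the shear by $2\lambda$, whence the $\mathrm{char}\neq 2$), and then shows by direct matrix manipulation that three explicit $\Ad$-translates of the resulting null set cover $M_2(F)\setminus F\bI$.

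Your approach is more structural: pass to $\mathfrak{sl}(2,F)$ via the trace splitting, then to $\mathbb{P}(\mathfrak{sl}(2,F))$, observe that this projective space has only finitely many $\SL(2,F)$-orbits (nilpotent, split, and finitely many elliptic ones indexed essentially by $F^\times/(F^\times)^2$), and that each stabiliser is amenable (Borel, normaliser of a split torus, or compact). An invariant mean on any orbit $G/H$ with $H$ amenable would force $\SL(2,F)$ to be amenable; you phrase this via the equivariant projection to $\mathbb{P}^1(F)$ in the non-elliptic cases (where compactness of $\mathbb{P}^1$ turns the mean into a genuine invariant probability measure, ruled out by strong proximality) and via averaging over the compact stabiliser in the elliptic case. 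This is correct; one small point worth making explicit is that within a single elliptic square class there may be two $\SL(2,F)$-orbits rather than one (the $\GL_2$-orbit can split, governed by $F^\times/N_{E/F}(E^\times)$), but this does not affect finiteness.

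What each approach buys: the paper's argument is entirely self-contained and computational, needing only the short lemma from \cite{BHV08}; yours is cleaner conceptually and makes transparent why non-amenability of $\SL(2,F)$ is the underlying obstruction, at the cost of invoking the orbit classification and the (standard) fact that an invariant mean on $G/H$ with $H$ amenable forces $G$ to be amenable.
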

\begin{proof}
We start by proving the lemma for $k=2$. So assume that $m$ is an $\Ad \SL(2,F)$-invariant mean on the Borel sets of $M_2(F)$.

In the proof of \cite[Proposition 1.4.12]{BHV08}, it is shown that if $m$ is a mean on the Borel sets of $F^2$ that is invariant under the transformations $\lambda \cdot (x,y) := (x + \lambda y, y)$ for all $\lambda \in F$, then
$$m(\{(x,y) \mid (x,y) \neq (0,0), |x| \leq |y| \}) = 0 \; .$$
Define $g_\lambda := \left(\begin{matrix} 1 & \lambda \\ 0 & 1 \end{matrix}\right)$ and notice that
$$g_\lambda \left(\begin{matrix} a & b \\ c & d \end{matrix}\right) g_\lambda^{-1} = \left(\begin{matrix} a + \lambda c & - \lambda a + b - \lambda^2 c + \lambda d \\ c & -\lambda c + d \end{matrix}\right) \; .$$
Hence, the map $\theta:M_2(F) \to F^2: \left(\begin{matrix} a & b \\ c & d \end{matrix}\right) \mapsto (a - d, c)$ satisfies $\theta(g_\lambda A g_\lambda^{-1}) = (2\lambda) \cdot \theta(A)$. Therefore, $m(\Om_0)=0$ for
$$\Om_0 := \left\{ \left(\begin{matrix} a & b \\ c & d \end{matrix}\right) \in M_2(F) \; \middle| \; |a-d| \leq |c| \;\text{and}\; (a-d,c) \neq (0,0) \right\} \; .$$
Taking the adjoint by $\left(\begin{matrix} \lambda & 0 \\ 0 & \lambda^{-1} \end{matrix}\right)$ for $|\lambda| \geq 2$, we get that $m(\Omega_1) = 0$ for
$$\Omega_1 := \left\{ \left(\begin{matrix} a & b \\ c & d \end{matrix}\right) \in M_2(F) \; \middle| \; |a-d| \leq 4 |c| \;\text{and}\; (a-d,c) \neq (0,0) \right\} \; .$$
For the same reason, we get that $m(\Omega_1') = 0$ for
$$\Omega_1' := \left\{ \left(\begin{matrix} a & b \\ c & d \end{matrix}\right) \in M_2(F) \; \middle| \; |a-d| \leq 4|b| \;\text{and}\; (a-d,b) \neq (0,0) \right\} \; .$$
Write $X = M_2(F) \setminus F \bI$. The matrices with $(a-d,c) = (0,0)$ belong to $\Omega_1'$ unless they are diagonal. Similarly, the matrices with $(a-d,b) = (0,0)$ belong to $\Omega_1$ unless they are diagonal. So, we find that $m(\Omega) = 0 = m(\Omega')$ for
$$\Omega = \left\{ \left(\begin{matrix} a & b \\ c & d \end{matrix}\right) \in X \; \middle| \; |a-d| \leq 4 |c| \right\} \quad\text{and}\quad
\Omega' = \left\{ \left(\begin{matrix} a & b \\ c & d \end{matrix}\right) \in X \; \middle| \; |a-d| \leq 4 |b| \right\} \; .$$
Put $\Omega\dpr := g_1 \Omega g_1^{-1}$, so that $m(\Omega\dpr) = 0$. To conclude the proof in the case $k=2$, it suffices to show that $\Omega \cup \Omega' \cup \Omega'' = X$.

Take $\left(\begin{matrix} a & b \\ c & d \end{matrix}\right) \in X \setminus (\Omega \cup \Omega')$. So, $\frac{1}{4} |a-d| > |b|, |c|$. We claim that
$$\left(\begin{matrix} a' & b' \\ c' & d' \end{matrix}\right) := g_1^{-1} \left(\begin{matrix} a & b \\ c & d \end{matrix}\right) g_1 = \left(\begin{matrix} a-c & a+b-c-d \\ c & c+d \end{matrix}\right)$$
belongs to $\Omega$. Since
$$|a' - d'| = |a-d-2c| \leq |a-d| + 2|c| < \frac{3}{2} |a -d| \quad\text{and}\quad
|b'| \geq |a-d| - |c| - |b| > \frac{1}{2}|a-d| \; ,$$
we indeed get that $|a'-d'| < 3 |b'|$. The claim follows and the lemma is proved in the case $k=2$.

For an arbitrary $k \geq 2$ and fixed $1 \leq p < q \leq k$, the map
$$M_k(F) \to M_2(F): (x_{ij}) \mapsto \left(\begin{matrix} x_{pp} & x_{pq} \\ x_{qp} & x_{qq} \end{matrix}\right)$$
is $\Ad \SL(2,F)$-equivariant. So, an $\Ad \SL(k,F)$-invariant mean $m$ on $M_k(F)$ is supported on $\{(x_{ij}) \in M_k(F) \mid x_{pp} = x_{qq}, x_{pq} = x_{qp}=0 \}$. Since $F \bI$ is the intersection of these sets, $m$ is supported on $F \bI$.
\end{proof}

\section{\boldmath Weak amenability of rigid C$^*$-tensor categories}\label{sec.weak-amen-tensor-cat}

Following \cite[Definition 5.1]{PV14}, a rigid C$^*$-tensor category is called \emph{weakly amenable} if there exists a sequence of completely bounded (cb) multipliers $\vphi_n : \Irr(\cC) \recht \C$ (see Definition \ref{def.cp-cb-multiplier}) converging to $1$ pointwise, with $\limsup_n \|\vphi_n\|\cb < \infty$ and with $\vphi_n$ finitely supported for every $n$.

Recall from the first paragraphs of Section \ref{sec.tube-algebra} the definition of the tube $*$-algebra $\cA$ of $\cC$ with respect to a full family of objects in $\cC$. To every function $\vphi : \Irr(\cC) \recht \C$, we associate the linear map
$$\theta_\vphi : \cA \recht \cA : \theta_\vphi(V) = \vphi(\al) \, V \quad\text{for all}\;\; V \in (i\al,\al j) \; .$$
We define $\|\theta_\vphi\|\cb$ by viewing $\cA$ inside its reduced C$^*$-algebra, i.e.\ by viewing $\cA \subset B(L^2(\cA,\Tr))$, where $\Tr$ is the canonical trace on $\cA$. We also consider the von Neumann algebra $\cA\dpr$ generated by $\cA$ acting on $L^2(\cA,\Tr)$.

In the following result, we clarify the link between the complete boundedness of $\vphi$ in the sense of Definition \ref{def.cp-cb-multiplier} and the complete boundedness of the map $\theta_\vphi$.

\begin{proposition}\label{prop.equality-of-cb-norms}
Let $\cC$ be a rigid C$^*$-tensor category. Denote by $\cA$ the tube $*$-algebra of $\cC$ with respect to a full family of objects in $\cC$. Let $\vphi : \Irr(\cC) \recht \C$ be any function.

Then, $\|\vphi\|\cb = \|\theta_\vphi\|\cb$. If this cb-norm is finite, we can uniquely extend $\theta_\vphi$ to a normal completely bounded map on $\cA\dpr$ having the same cb-norm.
\end{proposition}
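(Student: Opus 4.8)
The plan is to deduce the equality of cb-norms from the two one-sided inequalities $\|\theta_\vphi\|\cb\le\|\vphi\|\cb$ and $\|\vphi\|\cb\le\|\theta_\vphi\|\cb$, and then to upgrade the bounded map $\theta_\vphi$ on $\cA$ to a normal map on $\cA\dpr$. Write $\cH_0=\bigoplus_{i\in I}i$, let $\cB=(\cH_0,\cH_0)\subset\cA$ be the (full) subalgebra of local units, and decompose $\cA=\bigoplus_{\al\in\Irr(\cC)}\cA_\al$ with $\cA_\al=\bigoplus_{i,j\in I}(i\al,\al j)$, so that $\theta_\vphi$ acts on $\cA_\al$ by multiplication with $\vphi(\al)$ and is a $\cB$-$\cB$-bimodule map. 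Two elementary facts will be used repeatedly: $\theta_\vphi$ is symmetric for the trace, $\Tr(\theta_\vphi(a)b)=\Tr(a\,\theta_\vphi(b))$, with $\theta_\vphi(a)^\#=\theta_{\overline\vphi}(a^\#)$, so that boundedness of $\theta_\vphi$ on $\cA\subset B(L^2(\cA,\Tr))$ entails boundedness on all of $L^2(\cA,\Tr)$; and morphism spaces $(i\al,\al j)$ for an arbitrary object $\al$ and $i,j\in I$ define elements of $\cA$, as recalled in Section~\ref{sec.tube-algebra}. The whole argument is the categorical analogue of the fact that the Herz--Schur multiplier norm of $\vphi$ on $C^*_r(\Gamma)$ equals the Schur multiplier norm of $(g,h)\mapsto\vphi(g^{-1}h)$.

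\emph{The inequality $\|\theta_\vphi\|\cb\le\|\vphi\|\cb$.} Here I would construct a completely bounded factorization of $\theta_\vphi$ out of the system $(\Psi^\vphi_{\al_1|\be_1,\al_2|\be_2})$. The Hilbert space $L^2(\cA,\Tr)$, and each amplification $L^2(\cA,\Tr)\ot\C^n$, is spanned by morphisms $V\in(i\al,\al j)$ with $i,j\in I$, and the left regular representation of $\cA$ is given by compositions and partial categorical traces of such morphisms (the product formula in Section~\ref{sec.tube-algebra}). The naturality relation $\Psi^\vphi((X\ot Y)V(Z\ot T))=(X\ot Y)\Psi^\vphi(V)(Z\ot T)$, the insertion relation $\Psi^\vphi_{\al_1\al_2|\be_2\be_1,\al_3\al_4|\be_4\be_3}(1\ot V\ot 1)=1\ot\Psi^\vphi_{\al_2|\be_2,\al_4|\be_4}(V)\ot 1$, and the normalization $\Psi^\vphi_{\al|\overline\al,\eps|\eps}(s_\al)=\vphi(\al)s_\al$ together show that the operation ``apply the appropriate $\Psi^\vphi$-map to each morphism'' is well defined on $L^2(\cA,\Tr)$, agrees with $\theta_\vphi$ on $\cA$, and is intertwined by the categorical operations up to the scalars $\vphi(\al)$. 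Concretely, take for each pair $\al,\be$ a Wittstock factorization $\Psi^\vphi_{\al|\be,\al|\be}(\,\cdot\,)=S_{\al,\be}^{\,*}\,\rho_{\al,\be}(\,\cdot\,)\,T_{\al,\be}$ of this completely bounded map on $(\al\be,\al\be)$ with $\|S_{\al,\be}\|\,\|T_{\al,\be}\|\le\|\vphi\|\cb$, and assemble these into a single Hilbert space on which all morphism spaces act coherently with composition, tensoring and conjugation; the resulting factorization $\theta_\vphi(\,\cdot\,)=V^{*}\pi(\,\cdot\,)W$, with $\pi$ a $*$-representation of $\cA$ and $\|V\|\,\|W\|\le\|\vphi\|\cb$, gives $\|\theta_\vphi\|\cb\le\|\vphi\|\cb$. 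Making these dilations fit together coherently over all the morphism spaces at once — so that the intertwining identities above become genuine equalities of operators — is the main obstacle of the proof.

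\emph{The inequality $\|\vphi\|\cb\le\|\theta_\vphi\|\cb$.} By the defining properties of the system $\Psi^\vphi$ it suffices to bound $\|\Psi^\vphi_{\al_1|\be_1,\al_2|\be_2}\|\cb$ for all objects $\al_i,\be_i$ of $\cC$. Using standard solutions of the conjugate equations to ``bend'' parts of the $\al_i,\be_i$, together with the insertion relation, I would realize $(\al_1\be_1,\al_2\be_2)$ — with the action of $\Psi^\vphi_{\al_1|\be_1,\al_2|\be_2}$ — as a sub-operator-space of a matrix amplification $M_n(\cA)$ on which $\theta_\vphi\ot\id_n$ restricts to $\Psi^\vphi_{\al_1|\be_1,\al_2|\be_2}$; here one embeds the relevant objects into sums of objects from $I$ (possible by fullness of $I$), uses that $(i\al,\al j)\subset\cA$ for arbitrary $\al$, and identifies the action of $\theta_\vphi$ on these morphism spaces with the corresponding $\Psi^\vphi$-maps via the half-braiding description of representations of $\cA$ underlying Theorem~\ref{thm.our-tube-alg} (cf.\ \cite[Proposition 3.14]{PSV15}). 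Since restriction to a sub-operator-space does not increase the cb-norm, this gives $\|\Psi^\vphi_{\al_1|\be_1,\al_2|\be_2}\|\cb\le\|\theta_\vphi\ot\id_n\|\cb=\|\theta_\vphi\|\cb$, and taking the supremum over all tuples yields $\|\vphi\|\cb\le\|\theta_\vphi\|\cb$.

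\emph{The normal extension.} Assume now $\|\theta_\vphi\|\cb=\|\vphi\|\cb<\infty$. The factorization $\theta_\vphi(\,\cdot\,)=V^{*}\pi(\,\cdot\,)W$ produced in the second paragraph has $\pi$ a $*$-representation of $\cA$; replacing $\pi$ by its normal extension to $\cA\dpr$ on the closure of $\pi(\cA)L^2(\cA,\Tr)$ gives a normal completely bounded map $x\mapsto V^{*}\pi(x)W$ on $\cA\dpr$ of cb-norm at most $\|\vphi\|\cb$ that restricts to $\theta_\vphi$ on $\cA$; combined with the $L^2$-boundedness noted in the first paragraph, this map is precisely the $\sigma$-weakly continuous extension of $\theta_\vphi$. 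Uniqueness is immediate, since $\cA$ is $\sigma$-weakly dense in $\cA\dpr$ and any two normal maps agreeing on $\cA$ coincide; in particular the cb-norm of the extension is again $\|\vphi\|\cb$.
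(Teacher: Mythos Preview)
Your outline has the right two-sided structure, but both the direction $\|\theta_\vphi\|\cb\le\|\vphi\|\cb$ and the normal extension contain genuine gaps, and you miss a small but important twist involving $\vphitil(\al)=\vphi(\albar)$.

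\textbf{The main gap.} In the inequality $\|\theta_\vphi\|\cb\le\|\vphi\|\cb$ you propose to take separate Wittstock dilations of each $\Psi^\vphi_{\al|\be,\al|\be}$ and ``assemble these into a single Hilbert space on which all morphism spaces act coherently'', and you explicitly flag this gluing as ``the main obstacle of the proof''---but then you do not actually carry it out. This is not a detail that can be swept under the rug: individual Stinespring/Wittstock dilations have no canonical functoriality, so there is no general mechanism that makes independently chosen dilations over varying $(\al,\be)$ compatible with composition, tensoring and conjugation. The paper's idea is precisely to avoid this. One forms the ind-objects $\rho_1=\bigoplus_{\al,i}\al i$ and $\rho_2=\bigoplus_\al\al$, and lets $\cM$ be the type~I von Neumann algebra of all bounded endomorphisms of $\rho_1\rho_2$. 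On $\cM$ the maps $\Psi^\vphi$ assemble tautologically (by naturality, not by gluing dilations) into a single normal cb map $\Psi$ with $\|\Psi\|\cb\le\|\vphi\|\cb$. Then one writes down an explicit faithful \emph{normal} $*$-homomorphism $\Theta:\cA\dpr\to q\cM q$, for a suitable diagonal projection $q\in\cM$, satisfying $\Psi\circ\Theta=\Theta\circ\theta_{\vphitil}$. This single construction gives both $\|\theta_{\vphitil}\|\cb\le\|\vphi\|\cb$ and the normal extension in one stroke. Note the appearance of $\vphitil$: the intertwining relation one can actually verify from the axioms of $\Psi^\vphi$ is $\Psi\circ\Theta=\Theta\circ\theta_{\vphitil}$, not with $\theta_\vphi$, and one needs the preliminary observation that $\|\theta_\vphi\|\cb=\|\theta_{\vphitil}\|\cb$ (via the trace-preserving $*$-anti-isomorphism $\cA_J\cong\cA_{\overline J}$).

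\textbf{The normal extension.} Your argument here relies on the Wittstock $\pi$ admitting a normal extension to $\cA\dpr$. But a Wittstock dilation produces only a $*$-representation of the reduced C$^*$-algebra of $\cA$; there is no reason for it to be $\sigma$-weakly continuous for the specific von Neumann closure $\cA\dpr\subset B(L^2(\cA,\Tr))$. So the sentence ``replacing $\pi$ by its normal extension to $\cA\dpr$'' is unjustified. Again, the paper's embedding $\Theta:\cA\dpr\to q\cM q$ into a type~I algebra sidesteps this entirely: $\Psi$ is normal on $\cM$ by construction, and normality of the extension of $\theta_{\vphitil}$ is inherited through the normal embedding $\Theta$.

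For the reverse inequality $\|\vphi\|\cb\le\|\theta_\vphi\|\cb$ your sketch is closer to the paper, but the paper makes this completely explicit: for fixed $\al,\be$ one writes down a unital faithful $*$-homomorphism $\eta:(\al\be,\al\be)\to\cA_{\be\al}$ (the tube algebra built on the single object $\be\al$) satisfying $\theta_{\vphitil}^{\be\al}\circ\eta=\eta\circ\Psi^\vphi_{\al|\be}$, and then uses Morita invariance of $\|\theta_{\vphitil}^J\|\cb$ in the full family $J$. Your ``embed into $M_n(\cA)$'' is the right instinct but would need exactly this kind of concrete map to become a proof.
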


\begin{proof}
For any family $J$ of objects, we can define the tube $*$-algebra $\cA_J$ and the linear map $\theta_\vphi^J : \cA_J \recht \cA_J$. By strong Morita equivalence, we have $\|\theta_\vphi^J\|\cb = \|\theta_\vphi\|\cb$ whenever $J$ is full and we have $\|\theta_\vphi^J\|\cb \leq \|\theta_\vphi\|\cb$ for arbitrary $J$. Also, using standard solutions for the conjugate equations, we get natural linear maps $(i \al, \al j) \recht (\overline{j} \albar, \albar \overline{i})$ and they define a trace preserving $*$-anti-isomorphism of $\cA_J$ onto $\cA_{\overline{J}}$. Defining $\vphitil : \Irr(\cC) \recht \C$ by $\vphitil(\al) = \vphi(\albar)$ for all $\al \in \Irr(\cC)$, it follows that $\|\theta_\vphi\|\cb = \|\theta_{\vphitil}\|\cb$ and it follows that $\theta_\vphi$ extends to a normal completely bounded map on $\cA\dpr$ if and only if $\theta_{\vphitil}$ extends to $\cA\dpr$.

So, it suffices to prove that $\|\vphi\|\cb = \|\theta_{\vphitil}\|\cb$ and that in the case where $\|\vphi\|\cb < \infty$, we can extend $\theta_{\vphitil}$ to a normal completely bounded map on $\cA\dpr$. First assume that $\|\theta_{\vphitil}\|\cb \leq \kappa$. Fix arbitrary objects $\al,\be \in \cC$ and write $\Psi^\vphi_{\al | \be} := \Psi^\vphi_{\al | \be,\al | \be}$. We prove that $\|\Psi^\vphi_{\al | \be}\|\cb \leq \kappa$. Since $\al,\be$ were arbitrary, it then follows that $\|\vphi\|\cb \leq \kappa$.

Note that $(\al\be,\al\be)$ is a finite dimensional C$^*$-algebra. Consider the following three bijective linear maps, making use of standard solutions of the conjugate equations.
\begin{align*}
& \eta_1 : \bigoplus_{\pi \in \Irr(\cC)} \bigl( (\al,\al \pi) \ot (\pi \be,\be)\bigr) \recht (\al \be, \al \be) : \eta_1(V \ot W) = (V \ot 1) (1 \ot W) \;\; ,\\
& \eta_2 : \bigoplus_{\pi \in \Irr(\cC)} \bigl( (\al,\al \pi) \ot (\pi \be,\be)\bigr) \recht \bigoplus_{\pi \in \Irr(\cC)} \bigl( (\al \pibar,\al) \ot (\be,\pibar \be)\bigr) : \\
& \hspace{6cm}\eta_2(V \ot W) = (V \ot 1)(1 \ot s_\pi) \ot (t_\pi^* \ot 1)(1 \ot W) \;\; ,\\
& \eta_3 : \bigoplus_{\pi \in \Irr(\cC)} \bigl( (\al \pibar,\al) \ot (\be,\pibar \be)\bigr) \recht \cA_{\be \al} : \eta_3(V \ot W) = (1 \ot V)(W \ot 1) \;\; .
\end{align*}
A direct computation shows that $\eta := \eta_3 \circ \eta_2 \circ \eta_1^{-1}$ is a unital faithful $*$-homomorphism of $(\al \be,\al \be)$ to the tube $*$-algebra $\cA_{\be \al}$. One also checks that $\theta_{\vphitil}^{\beta\al} \circ \eta = \eta \circ \Psi^\vphi_{\al | \be}$. So, we get that
$$\|\Psi^\vphi_{\al | \be}\|\cb \leq \|\theta_{\vphitil}^{\beta\al}\|\cb \leq \|\theta_{\vphitil}\|\cb \leq \kappa \; .$$

Conversely, assume that $\|\vphi\|\cb \leq \kappa$. Define the ind-objects $\rho_1$ and $\rho_2$ for $\cC$ given by
$$\rho_1 = \bigoplus_{\al,i \in \Irr(\cC)} \al i \quad\text{and}\quad \rho_2 = \bigoplus_{\al \in \Irr(\cC)} \al \; .$$
Define the type I von Neumann algebra $\cM$ of all bounded endomorphisms of $\rho_1 \rho_2$. Note that for all $\al,i,\be \in \Irr(\cC)$, we have the natural projection $p_\al \ot p_i \ot p_\be \in \cM$ and we have the identification
$$(p_\al \ot p_i \ot p_\be) \cM (p_\gamma \ot p_j \ot p_\delta) = (\al i \be, \gamma j \delta)$$
for all $\al, i , \be, \gamma , j , \delta \in \Irr(\cC)$. By our assumption, there is a normal completely bounded map $\Psi : \cM \recht \cM$ satisfying
$$\Psi(V) = \Psi^\vphi_{\al i | \be , \gamma j | \delta}(V) \quad\text{for all}\;\; V \in (\al i \be, \gamma j \delta) \; .$$
We have $\|\Psi\|\cb \leq \kappa$.

Consider the projection $q \in \cM$ given by
$$q = \sum_{\al, i \in \Irr(\cC)} p_{\albar} \ot p_i \ot p_\al \; .$$
Since $\Psi(qTq) = q \Psi(T) q$ for all $T \in \cM$, the map $\Psi$ restricts to a normal completely bounded map on $q \cM q$ with $\|\Psi|_{q \cM q}\|\cb \leq \kappa$.

Denote by $\cA$ the tube $*$-algebra associated with $\Irr(\cC)$ itself as a full family of objects. We construct a faithful normal $*$-homomorphism $\Theta : \cA\dpr \recht q \cM q$ satisfying $\Psi \circ \Theta = \Theta \circ \theta_{\vphitil}$. Once we have obtained $\Theta$, it follows that $\|\theta_{\vphitil}\|\cb \leq \kappa$ and that $\theta_{\vphitil}$ extends to a normal completely bounded map on $\cA\dpr$.

To construct $\Theta$, define the Hilbert space
$$\cH = \bigoplus_{\al,i,j \in \Irr(\cC)} (\albar i \al , j)$$
and observe that we have the natural faithful normal $*$-homomorphism $\pi : q \cM q \recht B(\cH)$ given by left multiplication. Also consider the unitary operator
$$U : L^2(\cA,\Tr) \recht \cH : U(V) = d(\al)^{-1/2} \, (1 \ot V)(t_\al \ot 1) \quad\text{for all}\;\; V \in (i \al , \al j) \; .$$
We claim that $\Theta$ can be constructed such that $\pi(\Theta(V)) = U V U^*$ for all $V \in \cA$. To prove this claim, fix $i,\al,j \in \Irr(\cC)$ and $V \in (i \al, \al j)$. For all $\gamma,\be \in \Irr(\cC)$, define the element $W_{\gamma,\be} \in (\gammabar i \gamma, \betabar j \beta)$ given by the finite sum
\begin{equation}\label{eq.Wgb}
W_{\gamma,\be} = \sum_{Z \in \onb(\gammabar \al, \betabar)} \; d(\be)^{1/2} \, d(\gamma)^{1/2} \, (1 \ot 1 \ot \Ztil) \, (1 \ot V \ot 1) \, (Z \ot 1 \ot 1) \; ,
\end{equation}
where $\Ztil = (1 \ot t_\beta^*)(1 \ot Z^* \ot 1)(s_\gamma \ot 1)$ belongs to $(\gamma, \al \be)$. A direct computation shows that
$$\langle \pi(W_{\gamma,\be}) \, U(X) , U(Y) \rangle = \langle V \cdot X , Y \rangle$$
for all $X \in (j\be, \be k)$ and $Y \in (i \gamma, \gamma l)$. So, there is a unique element $\Theta(V) \in (1 \ot p_i \ot 1)q \cM q (1 \ot p_j \ot 1)$ satisfying
$$(p_{\gammabar} \ot p_i \ot p_\gamma) \, \Theta(V) \, (p_{\betabar} \ot p_j \ot p_\beta) = W_{\gamma,\beta}$$
for all $\gamma,\beta \in \Irr(\cC)$ and $\pi(\Theta(V)) = U V U^*$.

We have defined a faithful normal $*$-homomorphism $\Theta : \cA\dpr \recht q \cM q$. It remains to prove that $\Psi \circ \Theta = \Theta \circ \theta_{\vphitil}$. Using \eqref{eq.Wgb}, it suffices to prove that
\begin{equation}\label{eq.suffice-this}
\Psi^\vphi_{\gammabar i | \al\be, \gammabar \al j | \be}(1 \ot V \ot 1) = \vphi(\albar) \, 1 \ot V \ot 1 \; .
\end{equation}
The left hand side of \eqref{eq.suffice-this} equals $1 \ot \Psi^\vphi_{i | \al, \al j | \eps}(V) \ot 1$. Writing $V = (T \ot 1)(1 \ot 1 \ot s_{\albar})$ with $T \in (i,\al j \albar)$, we have
\begin{align*}
\Psi^\vphi_{i | \al, \al j | \eps}(V) &= (T \ot 1) \, \Psi^\vphi_{\al j \albar | \al, \al j | \eps}(1 \ot 1 \ot s_{\albar}) = (T \ot 1) (1 \ot 1 \ot \Psi^\vphi_{\albar | \al, \eps | \eps}(s_{\albar}) \\ & = \vphi(\albar) \, (T \ot 1) (1 \ot 1 \ot s_{\albar}) = \vphi(\albar) \, V \; .
\end{align*}
So \eqref{eq.suffice-this} holds and the proposition is proven.
\end{proof}

\section{\boldmath Weak amenability of $\cC(K < G)$}

\begin{theorem}\label{thm.char-weak-amen}
Let $G$ be a totally disconnected group and $K < G$ a compact open subgroup. Denote by $\cC$ the rigid C$^*$-tensor category of finite rank $G$-$L^\infty(G/K)$-$L^\infty(G/K)$-modules.

Then $\cC$ is weakly amenable if and only if $G$ is weakly amenable and there exists a sequence of probability measures $\om_n \in C_0(G)^*$ that are absolutely continuous with respect to the Haar measure and such that $\om_n \recht \delta_e$ weakly$^*$ and $\|\om_n \circ \Ad x - \om_n\| \recht 0$ uniformly on compact sets of $x \in G$.

In that case, the Cowling-Haagerup constant $\Lambda(\cC)$ of $\cC$ equals $\Lambda(G)$.
\end{theorem}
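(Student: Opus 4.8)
The plan is to transfer everything to the tube algebra. By Proposition~\ref{prop.equality-of-cb-norms} a function $\vphi:\Irr(\cC)\recht\C$ is a cb-multiplier precisely when the radial map $\theta_\vphi$ on the tube $*$-algebra $\cA$ is completely bounded, and then $\|\vphi\|\cb=\|\theta_\vphi\|\cb$, with $\theta_\vphi$ extending to a normal completely bounded map on $\cA\dpr$. Theorem~\ref{thm.our-tube-alg} identifies $\cA$ trace-preservingly with $\Pol(L^\infty(G)\rtimes_{\Ad}G)$ through the $*$-anti-isomorphism $\Theta$, hence $\cA\dpr$ with $\cM:=L^\infty(G)\rtimes_{\Ad}G$; put $T_\vphi:=\Theta\,\theta_\vphi\,\Theta^{-1}$, so $\|T_\vphi\|\cb=\|\vphi\|\cb$ and $\Lambda(\cC)$ is the infimum of $\limsup_n\|T_{\vphi_n}\|\cb$ over sequences of finitely supported $\vphi_n\recht 1$ pointwise. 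The decisive step is to compute $T_\vphi$ explicitly from \eqref{eq.Theta-on-fusion-algebra} and the concrete formula \eqref{eq.concrete-formula}, using the identification \eqref{eq.identify-irr-C} of $\Irr(\cC)$ with pairs $(\pi,x)$: one checks that $T_\vphi$ acts as the identity in the ``group direction'' of $\cM$ inside each double coset $KxK$ and as the central multiplier $\pi\mapsto\vphi(\pi,x)$ in the ``compact direction'' of $K\cap xKx^{-1}$, that its restriction to $q\cM q$ (with $q=1_Kp_K$), identified with the von Neumann algebra generated by $\C[\cC]$, is the multiplier $\vphi$ itself, that it descends through the $\Ad$-equivariant quotient $C_0(G)\rtimes^f_{\Ad}G\recht C^*(G)$ coming from evaluation at the fixed point $e$ to the bi-$K$-invariant Fourier multiplier on $G$ given by the value of $\vphi$ at the trivial representation of $K\cap xKx^{-1}$, and that it is governed in the $L^\infty(G)$-direction by a complex measure $\mu_\vphi$ whose fibrewise Fourier transform in $\cZ(L(K\cap xKx^{-1}))$ is $\vphi(\cdot,x)$, exactly as in \eqref{eq.tussenformule}. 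The dictionary this produces is the heart of the matter: $\vphi$ finitely supported $\Longleftrightarrow$ $\mu_\vphi$ absolutely continuous with respect to Haar measure; $\vphi\recht 1$ pointwise $\Longleftrightarrow$ $\mu_\vphi\recht\delta_e$ weakly$^*$; and the fact that $T_\vphi$ is a well-defined normal cb map on the crossed product is what encodes that $\mu_\vphi$ is almost $\Ad$-invariant, uniformly on compact sets.

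For the implication ``$\cC$ weakly amenable with constant $\le\kappa$'' $\Rightarrow$ ``$\Lambda(G)\le\kappa$ and the measure condition holds'', start from finitely supported $\vphi_n\recht 1$ pointwise with $\limsup_n\|\vphi_n\|\cb\le\kappa$. Composing the $T_{\vphi_n}$ with the unital completely positive (hence cb-norm-one) slice $C_0(G)\rtimes^f_{\Ad}G\recht C^*(G)$ coming from evaluation at $e$ yields bi-$K$-invariant Fourier multipliers $\psi_n$ on $G$ with $\|\psi_n\|_{M_0A(G)}\le\|\vphi_n\|\cb$ and $\psi_n\recht 1$ pointwise; since $p_K$ is full in $C^*(G)$ and since $\Lambda(G)$ is computed by bi-$K$-invariant multipliers (average any approximating net over $K\times K$), a standard averaging and truncation argument gives $\Lambda(G)\le\kappa$. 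Extracting the measures $\mu_n$ of the statement from $T_{\vphi_n}$ (testing against suitable vectors of $L^2(\cM)$) and applying the dictionary of the first paragraph — absolute continuity from finite support, weak$^*$ convergence to $\delta_e$ from $\vphi_n\recht 1$, and almost $\Ad$-invariance from the uniform cb-bound together with $T_{\vphi_n}\recht\mathrm{id}$, in parallel with the proof of Theorem~\ref{thm.amenable-haagerup-T}(2) — produces the required sequence. This gives $\Lambda(\cC)\ge\Lambda(G)$.

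For the converse, assume $G$ weakly amenable via compactly supported $\psi_m\recht 1$ uniformly on compacts with $\limsup_m\|\psi_m\|_{M_0A(G)}\le\Lambda(G)+\varepsilon$, and take measures $\om_n$ as in the statement. Transference of Fourier multipliers to crossed products gives cb maps $S_{\psi_m}$ on $\cM$, $Fu_x\mapsto\psi_m(x)Fu_x$, with $\|S_{\psi_m}\|\cb\le\|\psi_m\|_{M_0A(G)}$. For the $\om_n$, first carry out the reductions used in the proof of Theorem~\ref{thm.amenable-haagerup-T}(2) (push down onto $K$, make $\Ad K$-invariant, then spread by conjugation to measures $\widetilde\om_n$ that stay absolutely continuous and almost $\Ad$-invariant); absolute continuity then lets one realize ``convolution of the $L^\infty(G)$-part by $\widetilde\om_n$'' as an honest normal unital completely positive map $R_n$ on $\cM$, implemented as a compression in the standard form $L^2(\cM)$ against the square root of the density of $\widetilde\om_n$, the small defect in $\Ad$-invariance being absorbed because $\widetilde\om_n$ is uniformly almost $\Ad$-invariant. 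The composites $S_{\psi_m}\circ R_n$ are radial cb maps on $\cM$ with $\limsup\|\cdot\|\cb\le\Lambda(G)+\varepsilon$, converge to the identity pointwise after a diagonal choice of $(m,n)$, and — because $\psi_m$ is compactly supported and $\widetilde\om_n$ absolutely continuous — can be truncated by the spectral projection of a fixed compact open subgroup to become finitely supported; translating back through $\Theta$ and Proposition~\ref{prop.equality-of-cb-norms} yields finitely supported cb-multipliers $\vphi_k\recht 1$ pointwise with $\limsup\|\vphi_k\|\cb\le\Lambda(G)+\varepsilon$. Letting $\varepsilon\recht 0$ gives $\cC$ weakly amenable with $\Lambda(\cC)\le\Lambda(G)$; combined with the previous paragraph, $\Lambda(\cC)=\Lambda(G)$.

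The main obstacle is the explicit description of $T_\vphi$ in the first paragraph: disentangling, inside a single completely bounded map on $L^\infty(G)\rtimes_{\Ad}G$, the ``group part'' of the multiplier (seen through the slice at $e$, responsible for $\Lambda(G)$) from the ``$L^\infty(G)$-convolution part'' (responsible for the measures $\om_n$), and verifying along the way the two equivalences ``finitely supported $\Leftrightarrow$ absolutely continuous'' and ``defines a map on the crossed product $\Leftrightarrow$ almost $\Ad$-invariant''. Getting the normalizations in \eqref{eq.concrete-formula} right, checking that all the slice and transference maps have cb-norm one so that no constant is lost, and confirming that the radial maps genuinely descend through the quotient to $C^*(G)$ are the delicate points; once the dictionary is established, the transference, diagonalization and averaging steps are routine, exactly parallel to the group case and to the proof of Theorem~\ref{thm.amenable-haagerup-T}.
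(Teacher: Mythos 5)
Your overall strategy --- transporting cb-multipliers to $\Pol(L^\infty(G)\rtimes_{\Ad}G)$ via Theorem \ref{thm.our-tube-alg} and Proposition \ref{prop.equality-of-cb-norms}, reading off a Herz--Schur multiplier of $G$ on one side and a family of measures on the other, and coming back through a Jolissaint-type factorization --- is the same as the paper's. But there is a genuine gap at the decisive step of the forward direction. Your ``dictionary'' asserts that complete boundedness of $T_\vphi$ on the crossed product ``encodes'' almost $\Ad$-invariance of the associated measures, and you then extract the almost-invariant probability measures ``in parallel with the proof of Theorem \ref{thm.amenable-haagerup-T}(2)''. That proof works with \emph{positive} functionals: asymptotic $\Ad$-invariance of the restrictions to $C_0(G)$ comes from the usual Cauchy--Schwarz estimate for states converging weak$^*$ to the character $\counit$. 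For completely bounded multipliers there is no positivity, a single cb map $\Psi_\mu$ of bounded norm implies no almost-invariance of $\mu$ whatsoever, and even a uniformly cb-bounded sequence converging pointwise to the identity does not yield invariance by the cp argument. What is actually needed is Ozawa's extremality/parallelogram-law argument from \cite{Oz10}: one chooses a cbai realizing the optimal constant $\kappa^2$, symmetrizes $\mu_n=\overline{\mu_n}$, perturbs by $\zeta_n(x)=\mu_n(x_nx)\circ\Ad x_n$ (and by $\zeta_n(x)(F)=\mu_n(x)(HF)$), and uses that $\|(W_n+W_n')/2\|_\infty$ must still tend to $\kappa$ to force $\|W_n(z_n)-W_n'(z_n)\|\recht 0$. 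This mechanism is absent from your proposal, and without it the existence of the measures $\om_n$ is not established.

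Two further points. First, the functionals $\eta_n=\mu_n(e)$ are complex measures, and weak$^*$ convergence of $\eta_n$ to $\delta_e$ does not pass to $|\eta_n|/\|\eta_n\|$ (an oscillating density can converge weak$^*$ to $\delta_e$ while its total variation spreads out); the paper needs the extra step $\|\eta_n\cdot H-\eta_n\|\recht 0$ for $H\in\Pol(G)$ with $H(e)=1$, $\|H\|_\infty=1$, again via the parallelogram trick, before normalizing $|\eta_n|$. Second, in the converse direction your equivalence ``$\vphi$ finitely supported $\Leftrightarrow$ $\mu_\vphi$ absolutely continuous'' holds only in one direction: to get finite support one must approximate $\om_n$ by measures with density $\xi_n^2$, $\xi_n\in\Pol(K)^+$ (so only finitely many $\pi\in\Irr(K\cap xKx^{-1})$ contribute) and multiply by a compactly supported Herz--Schur multiplier. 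Moreover, the map $R_n$ you describe --- convolution by a merely \emph{almost} $\Ad$-invariant measure with the ``defect absorbed'' --- is not obviously completely bounded; the paper avoids this by setting $\mu_n(x)(F)=\zeta_n(x)\,\langle\pi(F)\pi(x)\xi_n,\xi_n\rangle$ for the conjugation representation $\pi$ on $L^2(G)$, which satisfies Lemma \ref{lem.char-Psi-mu-cb}.\ref{three} exactly, with the almost-invariance of $\om_n$ used only to prove $\vphi_n\recht 1$ pointwise.
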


In order to prove Theorem \ref{thm.char-weak-amen}, we must describe the cb-multipliers on $\cC$ in terms of completely bounded multipliers on the C$^*$-algebra $C_0(G) \rtimes^r_{\Ad} G$.

We denote by $\Pol(G)$ the $*$-algebra of locally constant, compactly supported functions on $G$. Note that $\Pol(G)$ is the linear span of the functions of the form $1_{Ly}$ where $y \in G$ and $L < G$ is a compact open subgroup. Also note that for any compact open subgroup $K_0 < G$, $\Pol(K_0)$ coincides with the $*$-algebra of coefficients of finite dimensional unitary representations of $K_0$. We define $\cE(G) = \Pol(G)^*$ as the space of all linear maps from $\Pol(G)$ to $\C$. Note that $\cE(G)$ can be identified with the space of finitely additive, complex measures on the space $\cF(G)$ of compact open subsets of $G$.

When $K_0 < G$ is a compact open subgroup, we say that a map $\mu : G \recht \cE(G)$ is $K_0$-equivariant if
$$\mu(k x k') = \mu(x) \circ \Ad k^{-1} \quad\text{for all}\;\; k,k' \in K_0 \; .$$
Note that this implies that $\mu(x)$ is $\Ad(K_0 \cap xK_0x^{-1})$-invariant for all $x \in G$.

As in \eqref{eq.identify-irr-C}, we associate to every $x \in G$ and $\pi \in \Irr(K \cap x K x^{-1})$ the irreducible object $(\pi,x) \in \Irr(\cC)$ defined as the irreducible $G$-$L^\infty(G/K)$-$L^\infty(G/K)$-module $\cH$ such that $\pi$ is isomorphic with the representation of $K \cap x K x^{-1}$ on $1_{xK} \cdot \cH \cdot 1_{eK}$. The formula
\begin{equation}\label{eq.bijection-mu-vphi}
\vphi(\pi,x) = \dim(\pi)^{-1} \, \mu(x)(\chi_\pi)
\end{equation}
then gives a bijection between the set of all functions $\vphi : \Irr(\cC) \recht \C$ and the set of all $K$-equivariant maps $\mu : G \recht \cE(G)$ with the property that $\mu(x)$ is supported on $K \cap x K x^{-1}$ for every $x \in G$.

Denote by $\cP = \Pol(L^\infty(G) \rtimes_{\Ad} G)$ the dense $*$-subalgebra defined in \eqref{eq.pol-algebra}. We always equip $\cP$ with the operator space structure inherited from $\cP \subset L^\infty(G) \rtimes_{\Ad} G$. As in Section \ref{sec.weak-amen-tensor-cat}, to every function $\vphi : \Irr(\cC) \recht \C$ is associated a linear map $\theta_\vphi : \cA \recht \cA$ on the tube $*$-algebra $\cA$ of $\cC$. We now explain how to associate to any $K_0$-equivariant map $\mu : G \recht \cE(G)$ a linear map $\Psi_\mu : \cP \recht \cP$. When $\vphi$ and $\mu$ are related by \eqref{eq.bijection-mu-vphi} and $\Theta : \cA \recht \cP$ is the $*$-anti-isomorphism of Theorem \ref{thm.our-tube-alg}, it will turn out that $\Psi_\mu \circ \Theta = \Theta \circ \theta_\vphi$, so that in particular, $\|\theta_\vphi\|\cb = \|\Psi_\mu\|\cb$. We will further prove a criterion for $\Psi_\mu$ to be completely bounded and that will be the main tool to prove Theorem \ref{thm.char-weak-amen}.

Denote $\Delta : L^\infty(G) \recht L^\infty(G \times G) : \Delta(F)(g,h) = F(gh)$. For every $\mu \in \cE(G)$, the linear map
$$\psi_\mu : \Pol(G) \recht \Pol(G) : \psi_\mu(F) = (\mu \ot \id)\Delta(F)$$
is well defined. When $\mu : G \recht \cE(G)$ is $K_0$-equivariant with respect to the compact open subgroup $K_0 < G$, we define
$$\Psi_\mu : \cP \recht \cP : \Psi_\mu(F \, u_x \, p_L) = \psi_{\mu(x)}(F) \, u_x \, p_L$$
for every $F \in \Pol(G)$, $x \in G$ and open subgroup $L < K_0$.

\begin{lemma} \label{lem.compatible-theta-Psi}
Denote by $\Theta : \cA \recht \cP$ the $*$-anti-isomorphism constructed in Theorem \ref{thm.our-tube-alg} between the tube $*$-algebra $\cA$ and $\cP = \Pol(L^\infty(G) \rtimes_{\Ad} G)$. Let $\vphi : \Irr(\cC) \recht \C$ be any function and denote by $\mu : G \recht \cE(G)$ the associated $K$-equivariant map given by \eqref{eq.bijection-mu-vphi} with $\mu(x)$ supported in $K \cap xKx^{-1}$ for all $x \in G$. Then, $\Psi_\mu \circ \Theta = \Theta \circ \theta_\vphi$.
\end{lemma}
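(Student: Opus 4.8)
Since both $\Psi_\mu\circ\Theta$ and $\Theta\circ\theta_\vphi$ are linear maps $\cA\recht\cP$ and $\Theta$ is bijective by Theorem~\ref{thm.our-tube-alg}, it is enough to verify $\Psi_\mu(\Theta(V))=\Theta(\theta_\vphi(V))$ on a linear spanning set of $\cA$. The plan is to begin with the warm-up case $V\in p_\eps\cA p_\eps=\C[\cC]$, where $\theta_\vphi$ acts on the $(\pi,x)$-block as multiplication by the scalar $\vphi(\pi,x)$, so by the closed formula \eqref{eq.Theta-on-fusion-algebra} the claim reduces to
\[
\Psi_\mu\bigl(p_K\,\dim(\pi)^{-1}\chi_\pi\,u_x\,p_K\bigr)=\vphi(\pi,x)\;p_K\,\dim(\pi)^{-1}\chi_\pi\,u_x\,p_K\; .
\]
Expanding $p_K\chi_\pi u_x p_K=\lambda(K)^{-1}\int_K(\chi_\pi\circ\Ad k^{-1})\,u_{kx}\,p_K\,dk$ and using the $K$-equivariance of $\mu$ (so that $\mu(kx)=\mu(x)\circ\Ad k^{-1}$) together with the support condition $\operatorname{supp}\mu(x)\subseteq K\cap xKx^{-1}$, this follows from the elementary identity: for a compact group $H$, an $\Ad H$-invariant complex measure $\nu$ on $H$ and $\rho\in\Irr(H)$,
\[
\psi_\nu(\chi_\rho)=\dim(\rho)^{-1}\,\nu(\chi_\rho)\,\chi_\rho\; ,
\]
which is Schur's lemma applied to the scalar operator $\int_H\rho(g)\,d\nu(g)$. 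Taking $H=K\cap xKx^{-1}$ and $\nu=\mu(x)$, the scalar $\dim(\pi)^{-1}\mu(x)(\chi_\pi)$ is exactly $\vphi(\pi,x)$ by the defining relation \eqref{eq.bijection-mu-vphi}.

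For the general case I would use the explicit family of morphisms $V\in(\overline{L^2(G/K_0)_g}\,\cH,\cH\,\overline{L^2(G/K_1)_h})$ on which $\Theta$ is computed in \eqref{eq.concrete-formula} and which spans $\cA$. In this description $\theta_\vphi(V)=\sum_{\gamma\in\Irr(\cC),\,U\in\onb(\cH,\gamma)}\vphi(\gamma)\,d(\gamma)\,(1\ot U^*)V(U\ot 1)$, i.e.\ $\theta_\vphi$ inserts the weight $\vphi(\gamma)$ into the decomposition of the object $\cH$ into irreducibles. Applying \eqref{eq.concrete-formula} to each summand and adding up, the combined sums over $\gamma$, over $\onb(\cH,\gamma)$ and over $\onb(1_{xK}\cdot\gamma\cdot 1_{eK})$ reassemble into a sum over an orthonormal basis of $1_{xK}\cdot\cH\cdot 1_{eK}$, but now with the $(\pi,x)$-isotypic component carrying the extra factor $\vphi(\pi,x)$. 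By \eqref{eq.bijection-mu-vphi} and the Schur identity above, this is precisely the insertion of the operator $\int_{K\cap xKx^{-1}}\pi_\cH(g')\,d\mu(x)(g')$, which on the $\cP$-side is exactly the effect of replacing the function $1_{K_2y^{-1}h^{-1}}$ appearing in \eqref{eq.concrete-formula} by $\psi_{\mu(x)}(1_{K_2y^{-1}h^{-1}})$, that is, of applying $\Psi_\mu$. The $K$-equivariance and the support condition on $\mu$ moreover guarantee that the compact-open subgroups $K_0,K_1,K_2$ and the projections $p_K$ pass through $\Psi_\mu$ in the same way they are carried along in \eqref{eq.concrete-formula}, so that $\Psi_\mu(\Theta(V))=\Theta(\theta_\vphi(V))$.

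The main obstacle is the bookkeeping in this last step: one has to match, on the nose, the double-coset representatives $x$, the coset representatives $y\in K/K_2$, and the three orthonormal-basis sums between the two sides, and then check --- by Schur orthogonality for the compact groups $K\cap xKx^{-1}$ --- that the $\vphi$-weighted sum over $\gamma\in\Irr(\cC)$ collapses to the convolution by $\mu(x)$ hidden inside $\psi_{\mu(x)}$. A cleaner route, avoiding \eqref{eq.concrete-formula} entirely, would be to note that $\cA$ is generated as a $*$-algebra by $\C[\cC]=p_\eps\cA p_\eps$ and $(\cH_0,\cH_0)$, that $\C[\cC]\cdot(\cH_0,\cH_0)\cdot\C[\cC]\subseteq\C[\cC]$, and that both $\theta_\vphi$ and, transported by $\Theta$, $\Psi_\mu$ are bimodule maps over $(\cH_0,\cH_0)$, respectively over $\Theta((\cH_0,\cH_0))=\Pol(L^\infty(K\backslash G)\rtimes K)$ --- for $\Psi_\mu$ this uses that $\psi_\nu$ commutes with multiplication by the functions $1_{Kz}$ and intertwines the adjoint action with itself once $\operatorname{supp}\nu\subseteq K$, again a consequence of $K$-equivariance --- and that they agree, both acting as the scalar $\vphi(\eps)$, on $(\cH_0,\cH_0)$ respectively on $\Theta((\cH_0,\cH_0))$; combined with the case of $\C[\cC]$ already settled, this would finish the proof.
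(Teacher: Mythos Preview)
Your main approach --- the warm-up on $\C[\cC]$ followed by the general case via the explicit formula \eqref{eq.concrete-formula} --- is exactly what the paper does (its proof is the single sentence ``direct computation using \eqref{eq.concrete-formula}''). The warm-up is correctly carried out, and the sketch of how the $\vphi$-weighted decomposition of $\cH$ into irreducibles matches, via Schur orthogonality on $K\cap xKx^{-1}$, the convolution by $\mu(x)$ hidden in $\psi_{\mu(x)}$ is the right mechanism; the bookkeeping you flag is real but routine.

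Your proposed ``cleaner route'', however, does not work as stated. The claim that $\cA$ is generated as a $*$-algebra by $\C[\cC]=p_\eps\cA p_\eps$ together with $(\cH_0,\cH_0)$ is false. Since $\eps$ occurs in $\cH_0$ with multiplicity one, any $W\in(\cH_0,\cH_0)$ (sitting in $\cA$ at $\alpha=\eps$) has $Wp_\eps$ and $p_\eps W$ equal to scalar multiples of $p_\eps$; hence $W\cdot C\in\C\cdot C\subset\C[\cC]$ and $C\cdot W\in\C[\cC]$ for every $C\in\C[\cC]$. Combined with $(\cH_0,\cH_0)\cdot(\cH_0,\cH_0)\subset(\cH_0,\cH_0)$ and $\C[\cC]\cdot\C[\cC]\subset\C[\cC]$, the $*$-algebra generated by these two pieces is simply the vector space $(\cH_0,\cH_0)+\C[\cC]$, which is the $\alpha=\eps$ slice together with the $(i,j)=(\eps,\eps)$ corner --- far from all of $\cA$. (Your own observation $\C[\cC]\cdot(\cH_0,\cH_0)\cdot\C[\cC]\subseteq\C[\cC]$ already hints at this collapse.) So the bimodule argument, while its individual ingredients are correct, cannot reach a general $V\in(i\alpha,\alpha j)$ with $i,j\neq\eps$ and $\alpha\neq\eps$; you really do need the computation with \eqref{eq.concrete-formula}.
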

\begin{proof}
The result follows from a direct computation using \eqref{eq.concrete-formula}.
\end{proof}

We prove the following technical result in exactly the same way as \cite{Jo91}.

\begin{lemma} \label{lem.char-Psi-mu-cb}
Let $K_0,K < G$ be compact open subgroups and $\mu : G \recht \cE(G)$ a $K_0$-equivariant map. Let $\kappa \geq 0$. Then the following conditions are equivalent.
\begin{enumlist}
\item\label{one} $\Psi_\mu$ extends to a completely bounded map on $C_0(G) \rtimes^r_{\Ad} G$ with $\|\Psi_\mu\|\cb \leq \kappa$.
\item\label{two} $\Psi_\mu$ extends to a normal completely bounded map on $L^\infty(G) \rtimes_{\Ad} G$ with $\|\Psi_\mu\|\cb \leq \kappa$.
\item\label{three} There exists a nondegenerate $*$-representation $\pi : C_0(G) \rtimes^f_{\Ad} G \recht B(\cK)$ and bounded maps $V, W : G \recht \cK$ such that
\begin{itemlist}
\item $V(k x k') = \pi(k) V(x)$ and $W(k x k') = \pi(k) W(x)$ for all $x \in G$, $k \in K_0$ and $k' \in K$,
\item $\mu(zy^{-1})(F) = \langle \pi(F) \pi(zy^{-1}) V(y) , W(z) \rangle$ for all $F \in \Pol(G)$ and $y,z \in G$,
\item $\|V\|_\infty \, \|W\|_\infty \leq \kappa$.
\end{itemlist}
In particular, every $\mu(x)$ is an actual complex measure on $G$, i.e.\ $\mu(x) \in C_0(G)^*$.
\end{enumlist}
\end{lemma}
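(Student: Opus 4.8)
This lemma is a crossed-product version of the Bo\.zejko--Fendler and de Canni\`ere--Haagerup description of completely bounded Herz--Schur multipliers, and the plan is to run the argument of \cite{Jo91} in the present setting, establishing the cycle \eqref{three}$\,\Rightarrow\,$\eqref{two}$\,\Rightarrow\,$\eqref{one}$\,\Rightarrow\,$\eqref{three}. The implication \eqref{two}$\,\Rightarrow\,$\eqref{one} is soft: since $\Psi_\mu$ maps the dense $*$-subalgebra $\cP$ into itself and $C_0(G)\rtimes^r_{\Ad}G$ sits inside $L^\infty(G)\rtimes_{\Ad}G$, the restriction to $C_0(G)\rtimes^r_{\Ad}G$ of the normal completely bounded map $\Psi_\mu$ is completely bounded with $\|\cdot\|\cb\le\kappa$ for the reduced norm, and by density it maps $C_0(G)\rtimes^r_{\Ad}G$ into itself.

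For \eqref{three}$\,\Rightarrow\,$\eqref{two} I would produce an explicit Gilbert-type factorization of $\Psi_\mu$ from the data $(\pi,V,W)$. Starting from the nondegenerate representation $\pi$ of $C_0(G)\rtimes^f_{\Ad}G$ on $\cK$, one builds a normal $*$-representation $\pitil$ of $L^\infty(G)\rtimes_{\Ad}G$ on a Hilbert space of the form $L^2(G)\ot\cK$, and one introduces the bounded ``column'' operators $\cV,\cW\colon L^2(G)\recht L^2(G)\ot\cK$ with pointwise entries $V(g)$ and $W(g)$, of norm $\|V\|_\infty$, resp.\ $\|W\|_\infty$. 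The $K_0$- and $K$-equivariance of $V$ and $W$ makes $\cV$ and $\cW$ compatible with the operators $u_k$ ($k\in K_0$) and the projections $p_L$ that occur in $\cP$, and a direct computation on a generator $F\,u_x\,p_L$, using the identity $\mu(zy^{-1})(F)=\langle\pi(F)\pi(zy^{-1})V(y),W(z)\rangle$ together with $\psi_{\mu(x)}(F)=(\mu(x)\ot\id)\Delta(F)$, shows that $T\mapsto\cV^*\pitil(T)\cW$ agrees with $\Psi_\mu$ on $\cP$. Being the composition of a normal $*$-homomorphism with two bounded multiplications, this map is normal, completely bounded, and satisfies $\|\Psi_\mu\|\cb\le\|\cV\|\,\|\cW\|=\|V\|_\infty\,\|W\|_\infty\le\kappa$. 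The final assertion of \eqref{three} then comes for free, because $F\mapsto\langle\pi(F)\pi(x)V(e),W(x)\rangle$ is $\|\cdot\|_\infty$-continuous, so every $\mu(x)$ lies in $C_0(G)^*$.

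The heart of the matter is \eqref{one}$\,\Rightarrow\,$\eqref{three}, and here I would follow \cite{Jo91} closely. Fix a faithful representation of $A:=C_0(G)\rtimes^r_{\Ad}G$ on $\cL=L^2(G\times G)$. Then $a\mapsto\Psi_\mu(a)$ is a completely bounded map $A\recht B(\cL)$ with cb-norm at most $\kappa$, so by the Wittstock factorization theorem there are a Hilbert space $\cK$, a nondegenerate $*$-representation $\rho$ of $A$ on $\cK$, which we view as a representation of $C_0(G)\rtimes^f_{\Ad}G$ through the canonical surjection, and bounded operators $S,T\colon\cL\recht\cK$ with $\|S\|\,\|T\|\le\kappa$ and $\Psi_\mu(a)=S^*\rho(a)T$ for all $a\in A$. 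To recover $V$ and $W$, fix for each compact open subgroup $K_1<K_0\cap K$ and each $y\in G$ the vector $\delta_y^{K_1}:=\lambda(K_1)^{-1/2}\,1_{yK_1}\ot e_0\in\cL$, with $e_0$ a fixed unit vector in $L^2(G)$, define $W(z)$ out of $S\,\delta_z^{K_1}$ and $V(y)$ out of $\rho$ of a suitable element of $\cP$ applied to $T\,\delta_y^{K_1}$, and check, using that the functions involved are locally constant and compactly supported, that these stabilize as $K_1$ shrinks. Expanding $\langle\Psi_\mu(F\,u_{zy^{-1}}\,p_{K_1})\,\delta_y^{K_1},\delta_z^{K_1}\rangle$ via $\Psi_\mu(F\,u_{zy^{-1}}\,p_{K_1})=\psi_{\mu(zy^{-1})}(F)\,u_{zy^{-1}}\,p_{K_1}$ gives $\mu(zy^{-1})(F)$ on one side and $\langle\rho(F)\rho(zy^{-1})V(y),W(z)\rangle$ on the other; finally, averaging over $k\in K_0$ on the left and $k'\in K$ on the right, and using the $K_0$-equivariance of $\mu$ together with the $\rho(p_L)$-invariances, one replaces $V,W$ by versions obeying $V(kxk')=\pi(k)V(x)$ and $W(kxk')=\pi(k)W(x)$, with the bound $\|V\|_\infty\,\|W\|_\infty\le\kappa$ preserved.

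The step I expect to be the main obstacle is this last extraction. In the group case treated in \cite{Jo91} one works over $\ell^2(\Gamma)$, where the point masses $\delta_g$ are genuine orthonormal vectors from which $V$ and $W$ read off at once, whereas here $L^2(G)$ is a continuous object: one must replace point masses by normalized indicators $\lambda(K_1)^{-1/2}1_{yK_1}$, verify that the resulting $V(y)$ and $W(z)$ are independent of the small compact open subgroup $K_1$, and track how the two-sided equivariance under $K_0$ and $K$ interacts with the projections $p_L\in L(G)$ under $\rho$. Once this discretization and equivariance bookkeeping is in place, the rest of the argument transports essentially verbatim from \cite{Jo91}.
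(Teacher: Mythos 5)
Your overall architecture (the Jolissaint cycle \eqref{three}$\Rightarrow$\eqref{two}$\Rightarrow$\eqref{one}$\Rightarrow$\eqref{three}) and your treatment of \eqref{three}$\Rightarrow$\eqref{two} and \eqref{two}$\Rightarrow$\eqref{one} coincide with the paper's proof: the paper also builds a normal representation $\pitil$ of $L^\infty(G)\rtimes_{\Ad}G$ on $\cK\ot L^2(G\times G)$ via $(\pi\ot\pi_r)\circ\Delta$ and factors $\Psi_\mu$ through column operators with entries $V(h),W(h)$. The problem is in \eqref{one}$\Rightarrow$\eqref{three}, and it is not just bookkeeping. You apply Wittstock to $\Psi_\mu$ in a faithful representation of $C_0(G)\rtimes^r_{\Ad}G$ and then pair $\Psi_\mu(F\,u_{zy^{-1}}\,p_{K_1})$ against vectors $\delta_y^{K_1}=\lambda(K_1)^{-1/2}1_{yK_1}\ot e_0$ with $e_0$ a \emph{fixed} unit vector, claiming the left-hand side "gives $\mu(zy^{-1})(F)$". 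It does not: $\Psi_\mu(F\,u_x\,p_{K_1})=\psi_{\mu(x)}(F)\,u_x\,p_{K_1}$ where $\psi_{\mu(x)}(F)(g)=\mu(x)\bigl(F(\cdot\,g)\bigr)$, so the matrix coefficient you compute is an average of $\mu(x)$ against right translates $F(\cdot\,g)$ weighted by $|e_0|^2$ over the support of the $C_0(G)$-leg of your vectors. To extract $\mu(x)(F)=\psi_{\mu(x)}(F)(e)$ you must localize that leg at the identity element $e\in G$, which a fixed $e_0\in L^2(G)$ cannot do when $G$ is non-discrete. (The statement could be salvaged by taking $e_0=\lambda(L)^{-1/2}1_L$ and shrinking $L$, using that $\psi_{\mu(x)}(F)$ is constant equal to $\mu(x)(F)$ on a small enough $L$ for each fixed $F\in\Pol(G)$, but then your $V(y),W(z)$ depend on $L$ and you owe a stabilization argument that you only gesture at for $K_1$, not for $e_0$.)

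The paper sidesteps all of this with one move you are missing: before dilating, it composes $\Psi_\mu$ with the representation $\zeta$ of $C_0(G)\rtimes^r_{\Ad}G$ on $L^2(G)$ given by $\zeta(F)=F(e)1$ and $\zeta(u_x)=\lambda_x$ (well defined because $e$ is a fixed point of the conjugation action). Then $(\zeta\circ\Psi_\mu)(F\,u_x\,p_L)=\mu(x)(F)\,\lambda_x\,p_L$ exactly, so the evaluation at $e$ is built in, and moreover $\zeta\circ\Psi_\mu$ is a $(\lambda_{K_0},\lambda_{K_0})$-bimodule map, so the bimodule version of the Stinespring--Wittstock theorem produces $\cV,\cW:L^2(G)\recht\cK$ already satisfying $\cV\lambda_k=\pi(k)\cV$ and $\cW\lambda_k=\pi(k)\cW$; setting $V(y)=\cV(1_{yK})$, $W(z)=\cW(1_{zK})$ (with $\lambda(K)=1$) gives both the equivariance and the identity $\mu(zy^{-1})(F)=\langle\pi(F)\pi(zy^{-1})V(y),W(z)\rangle$ with no a posteriori averaging over $K_0$ and $K$. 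You should replace your extraction step by this composition with $\zeta$; as written, the central identity of \eqref{one}$\Rightarrow$\eqref{three} is false.
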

\begin{proof}
\ref{one} $\Rightarrow$ \ref{three}. Denote $P = C_0(G) \rtimes^r_{\Ad} G$ and consider the (unique) completely bounded extension of $\Psi_\mu$ to $P$, which we still denote as $\Psi_\mu$. Define the nondegenerate $*$-representation
$$\zeta : P \recht B(L^2(G)) : \zeta(F) = F(e) 1 \;\;\text{and}\;\; \zeta(u_x) = \lambda_x$$
for all $F \in C_0(G)$, $x \in G$. Then $\zeta \circ \Psi_\mu : P \recht B(L^2(G))$ has cb norm bounded by $\kappa$ and satisfies
$$(\zeta \circ \Psi_\mu)(u_k S u_{k'}) = \lambda_k \, (\zeta \circ \Psi_\mu)(S) \, \lambda_{k'}$$
for all $S \in P$, $k,k' \in K_0$. By the Stinespring dilation theorem proved in \cite[Theorem B.7]{BO08}, we can choose a nondegenerate $*$-representation $\pi : P \recht B(\cK)$ and bounded operators $\cV, \cW : L^2(G) \recht \cK$ such that
\begin{itemlist}
\item $(\zeta \circ \Psi_\mu)(S) = \cW^* \pi(S) \cV$ for all $S \in P$,
\item $\cV \lambda_k = \pi(k) \cV$ and $\cW \lambda_k = \pi(k) \cW$ for all $k \in K_0$,
\item $\|\cV\| \, \|\cW\| = \|\Psi_\mu\|\cb \leq \kappa$.
\end{itemlist}
We normalize the left Haar measure on $G$ such that $\lambda(K)=1$ and define the maps $V, W : G \recht \cK$ given by $V(y) = \cV(1_{yK})$ and $W(z) = \cW(1_{zK})$. By construction, \ref{three} holds.

\ref{three} $\Rightarrow$ \ref{two}. Write $P\dpr = L^\infty(G) \rtimes_{\Ad} G$. Denote by $\pi_r : P\dpr \recht B(L^2(G \times G))$ the standard representation given by
$$(\pi_r(F) \xi)(g,h) = F(hgh^{-1}) \xi(g,h) \quad\text{and}\quad (\pi_r(u_x)\xi)(g,h) = \xi(g,x^{-1}h)$$
for all $g,h,x \in G$, $F \in L^\infty(G)$. For every nondegenerate $*$-representation $\pi : C_0(G) \rtimes^f_{\Ad} G \recht B(\cK)$, there is a unique normal $*$-homomorphism $\pitil : P\dpr \recht B(\cK \ot L^2(G \times G))$ satisfying
$$\pitil(F) = (\pi \ot \pi_r)\Delta(F) \quad\text{and}\quad \pitil(u_x) = \pi(x) \ot \pi_r(x)$$
for all $F \in C_0(G)$, $x \in G$. Given $V$ and $W$ as in \ref{three}, we then define the bounded operators $\cV, \cW : L^2(G \times G) \recht \cK \ot L^2(G \times G)$ by
$$(\cV \xi)(g,h) = \xi(g,h) V(h) \quad\text{and}\quad (\cW \xi)(g,h) = \xi(g,h) W(h)$$
for all $g,h \in G$. Note that $\|\cV\| = \|V\|_\infty$ and $\|\cW\| = \|W\|_\infty$. Since $\Psi_\mu(T) = \cW^* \pitil(T) \cV$ for all $T \in \cP$, it follows that \ref{two} holds.

\ref{two} $\Rightarrow$ \ref{one} is trivial.
\end{proof}

We are now ready to prove Theorem \ref{thm.char-weak-amen}. We follow closely the proof of \cite[Theorem A]{Oz10}.

\begin{proof}[Proof of Theorem \ref{thm.char-weak-amen}]
We define $\cQ(G)$ as the set of all maps $\mu : G \recht \cE(G)$ satisfying the following properties:
\begin{itemlist}
\item there exists a compact open subgroup $K_0 < G$ such that $\mu$ is $K_0$-equivariant,
\item for every $x \in G$, we have that $\mu(x) \in C_0(G)^*$, $\mu(x)$ is compactly supported and $\mu(x)$ is absolutely continuous with respect to the Haar measure,
\item $\|\Psi_\mu\|\cb < \infty$.
\end{itemlist}
Writing $\|\mu\|\cb := \|\Psi_\mu\|\cb$, we call a sequence $\mu_n \in \cQ(G)$ a cbai (completely bounded approximate identity) if
\begin{itemlist}
\item $\limsup_n \|\mu_n\|\cb < \infty$,
\item for every $F \in C_0(G)$, we have that $\mu_n(x)(F) \recht F(e)$ uniformly on compact sets of $x \in G$,
\item for every $n$, we have that $\mu_n$ has compact support (i.e. $\mu_n(x) = 0$ for all $x$ outside a compact subset of $G$).
\end{itemlist}
If a cbai exists, we define $\Gamma(G)$ as the smallest possible value of $\limsup_n \|\mu_n\|\cb$, where $(\mu_n)$ runs over all cbai. Note that this smallest possible value is always attained by a cbai.

First assume that $\cC$ is weakly amenable. By Proposition \ref{prop.equality-of-cb-norms}, we can take a sequence of finitely supported functions $\vphi_n : \Irr(\cC) \recht \C$ converging to $1$ pointwise and satisfying $\limsup_n \|\theta_{\vphi_n}\|\cb = \Lambda(\cC)$ where $\theta_{\vphi_n} : \cA \recht \cA$ as before. Define the $K$-equivariant maps $\mu_n : G \recht \cE(G)$ associated with $\vphi_n$ by \eqref{eq.bijection-mu-vphi}.

For a fixed $n$ and a fixed $x \in G$, there are only finitely many $\pi \in \Irr(K \cap x K x^{-1})$ such that $\vphi_n(\pi,x) \neq 0$. So, $\mu_n(x)$ is an actual complex measure on $K \cap xKx^{-1}$ that is absolutely continuous with respect to the Haar measure (and with the Radon-Nikodym derivative being in $\Pol(K \cap x K x^{-1})$). By Lemma \ref{lem.compatible-theta-Psi}, $\|\Psi_{\mu_n}\|\cb = \|\theta_{\vphi_n}\|\cb < \infty$. So, $\mu_n \in \cQ(G)$ and the sequence $(\mu_n)$ is a cbai with $\limsup_n \|\mu_n\|\cb \leq \Lambda(\cC)$. Thus, $\Gamma(G) \leq \Lambda(\cC)$. Write $\kappa = \Gamma(G)^{1/2}$.

For every map $\mu : G \recht \cE(G)$, we define
$$\mubar : G \recht \cE(G) : \mubar(x)(F) = \overline{(\mu(x^{-1}) \circ \Ad (x^{-1}))(\overline{F})} \; .$$
If $\mu$ is $K_0$-equivariant, also $\mubar$ is $K_0$-equivariant and $\Psi_{\mubar}(T) = (\Psi_\mu(T^*))^*$ for all $T \in \cP$. So, $\|\mubar\|\cb = \|\mu\|\cb$. Also, if $(\mu_n)$ is a cbai, then $(\overline{\mu_n})$ is a cbai.

Since $\Gamma(G) = \kappa^2 < \infty$ and using Lemma \ref{lem.char-Psi-mu-cb}, we can take a cbai $(\mu_n)$, a nondegenerate $*$-representation $\pi : C_0(G) \rtimes^f_{\Ad} G \recht B(\cK)$ and bounded functions $V_n , W_n : G \recht \cK$ as in Lemma \ref{lem.char-Psi-mu-cb}.\ref{three} with
$$\lim_n \|V_n\|_\infty = \kappa = \lim_n \|W_n\|_\infty \; .$$
Replacing $\mu_n$ by $(\mu_n + \overline{\mu_n})/2$, we may assume that $\mu_n = \overline{\mu_n}$ for all $n$. It then follows that both formulas
\begin{align*}
& \mu_n(z y^{-1})(F) = \langle \pi(F) \pi(zy^{-1}) V_n(y) , W_n(z) \rangle \quad\text{and}\\
& \mu_n(z y^{-1})(F) = \langle \pi(F) \pi(zy^{-1}) W_n(y) , V_n(z) \rangle
\end{align*}
hold for all $F \in C_0(G)$ and $y,z \in G$.

Put $\eta_n := \mu_n(e)$. We prove that $\|\eta_n \circ \Ad x - \eta_n\| \recht 0$ uniformly on compact sets of $x \in G$. To prove this statement, fix an arbitrary compact subset $C \subset G$ and an arbitrary sequence $x_n \in C$. Define
$$\zeta_n : G \recht \cE(G) : \zeta_n(x) = \mu_n(x_n x) \circ \Ad x_n \; .$$
Since $\Psi_{\zeta_n}(T) = u_{x_n}^* \Psi_{\mu_n}(u_{x_n} T)$, it follows that $(\zeta_n)$ is a cbai. Also note that for all $y,z \in G$ and $F \in C_0(G)$, we have
$$\zeta_n(zy^{-1})(F) = \langle \pi((\Ad x_n)(F)) \, \pi(x_n z y^{-1}) \, V_n(y) , W_n(x_n z) \rangle
= \langle \pi(F) \pi(zy^{-1}) V_n(y), W_n'(z)\rangle \; ,$$
with $W_n'(z) = \pi(x_n)^* W_n(x_n z)$. Then also $(\mu_n + \zeta_n)/2$ is a cbai satisfying
$$\frac{1}{2}(\mu_n + \zeta_n)(zy^{-1})(F) = \langle \pi(F) \pi(zy^{-1}) V_n(y) , (W_n(z) + W_n'(z))/2 \rangle$$
for all $y,z \in G$ and $F \in C_0(G)$. We conclude that
\begin{align*}
\kappa^2 & \leq \liminf_n \|V_n\|_\infty \; \|(W_n + W_n')/2\|_\infty = \kappa \; \liminf_n \|(W_n + W_n')/2\|_\infty \\
& \leq \kappa \; \limsup_n \|(W_n + W_n')/2\|_\infty \leq \kappa \; \frac{1}{2} \limsup_n \bigl(\|W_n\|_\infty + \|W_n'\|_\infty\bigr) = \kappa^2 \; .
\end{align*}
Therefore, $\lim_n \|(W_n + W_n')/2\|_\infty = \kappa$. So, we can choose $z_n \in G$ such that $\lim_n \|(W_n(z_n) + W_n'(z_n))/2\| = \kappa$. Since also
$\limsup_n \|W_n(z_n)\| \leq \kappa$ and $\limsup_n \|W_n'(z_n)\| \leq \kappa$, the parallelogram law implies that $\lim_n \|W_n(z_n) - W_n'(z_n)\| = 0$.

Since for all $F \in C_0(G)$,
\begin{align*}
& \zeta_n(e)(F) = \zeta_n (z_n z_n^{-1})(F) = \langle \pi(F) V_n(z_n) , W_n'(z_n) \rangle \quad\text{and} \\
& \mu_n(e)(F) = \mu_n(z_n z_n^{-1})(F) = \langle \pi(F) V_n(z_n), W_n(z_n) \rangle \; ,
\end{align*}
it follows that $\lim_n \|\zeta_n(e) - \mu_n(e)\| = 0$. This means that $\lim_n \|\mu_n(x_n) \circ \Ad x_n - \mu_n(e)\| = 0$. Since the sequence $x_n \in C$ was arbitrary, we have proved that $\lim_n \|\mu_n(x) - \mu_n(e) \circ \Ad x^{-1} \| = 0$ uniformly on compact sets of $x \in G$.

Reasoning in a similar way with $\zeta_n : G \recht \cE(G) : \zeta_n(x) = \mu_n(x x_n^{-1})$, which satisfies
$$\zeta_n(zy^{-1})(F) = \langle \pi(F) \pi(zy^{-1}) V_n'(y) , W_n(z) \rangle$$
with $V_n'(y) = \pi(x_n)^* V_n(x_n y)$, we also find that $\lim_n \|\mu_n(x) - \mu_n(e)\| = 0$ uniformly on compact sets of $x \in G$. Both statements together imply that $\|\eta_n \circ \Ad x - \eta_n\| \recht 0$ uniformly on compact sets of $x \in G$.

We next claim that for every $H \in \Pol(G)$ with $H(e) = 1$ and $\|H\|_\infty = 1$, we have that $\lim_n \|\eta_n \cdot H - \eta_n\| = 0$. To prove this claim, define
$$\zeta_n : G \recht \cE(G) : \zeta_n(x)(F) = \mu_n(x)(H F) \; .$$
Since $\zeta_n(zy^{-1})(F) = \langle \pi(F) \pi(zy^{-1}) V_n(y) , W_n'(z)\rangle$ with $W_n'(z) = \pi(H)^* W_n(z)$ and because the function $H \in \Pol(G)$ is both left and right $K_0$-invariant for a small enough compact open subgroup $K_0 < G$, it follows from Lemma \ref{lem.char-Psi-mu-cb} that
$$\|\zeta_n\|\cb \leq \|V_n\|_\infty \, \|W_n'\|_\infty \leq \|V_n\|_\infty \, \|W_n\|_\infty = \|\mu_n\|\cb \; .$$
So again, $(\zeta_n)$ and $(\mu_n + \zeta_n)/2$ are cbai. The same reasoning as above gives us a sequence $z_n \in G$ with $\lim_n \|W_n(z_n) - W_n'(z_n) \| = 0$, which allows us to conclude that $\lim_n \|\mu_n(e) - \zeta_n(e)\| = 0$, thus proving the claim.

Altogether, we have proved that $\eta_n \in C_0(G)^*$ is a sequence of complex measures that are absolutely continuous with respect to the Haar measure and that satisfy
\begin{itemlist}
\item $\|\eta_n - \eta_n \circ \Ad x \| \recht 0$ uniformly on compact sets of $x \in G$,
\item $\|\eta_n \cdot 1_L - \eta_n \| \recht 0$ for every compact open subset $L \subset G$ with $e \in L$,
\item $\eta_n(F) \recht F(e)$ for every $F \in C_0(G)$.
\end{itemlist}
In particular, $\liminf_n \|\eta_n\| \geq 1$. But then $\om_n := \|\eta_n\|^{-1} \, |\eta_n|$ is a sequence of probability measures on $G$ that are absolutely continuous with respect to the Haar measure and satisfy $\om_n \recht \delta_e$ weakly$^*$ and $\|\om_n \circ \Ad x - \om_n\| \recht 0$ uniformly on compact sets of $x \in G$.

By Lemma \ref{lem.char-Psi-mu-cb}, the maps $\Psi_{\mu_n}$ extend to normal cb maps on $L^\infty(G) \rtimes_{\Ad} G$. Restricting to $L(G)$, we obtain the compactly supported Herz-Schur multipliers
$$L(G) \recht L(G) : u_x \mapsto \gamma_n(x) u_x \quad\text{for all}\;\; x \in G \; ,$$
where $\gamma_n : G \recht \C$ is the compactly supported, locally constant function given by $\gamma_n(x) = \mu_n(x)(1)$. So, $G$ is weakly amenable and
$$\Lambda(G) \leq \limsup_n \|\Psi_{\mu_n}|_{L(G)}\|\cb \leq \limsup_n \|\Psi_{\mu_n}\| \leq \Lambda(\cC) \; .$$

Conversely, assume that $G$ is weakly amenable and that there exists a sequence of probability measures $\om_n \in C_0(G)^*$ that are absolutely continuous with respect to the Haar measure and such that $\om_n \recht \delta_e$ weakly$^*$ and $\|\om_n \circ \Ad x - \om_n\| \recht 0$ uniformly on compact sets of $x \in G$.

Since $G$ is weakly amenable, we can take a sequence of $K$-biinvariant Herz-Schur multipliers $\zeta_n : G \recht \C$ having compact support, converging to $1$ uniformly on compacta and satisfying $\limsup_n \|\zeta_n\|\cb = \Lambda(G)$.

Denote by $\Pol(G)^+$ the set of positive, locally constant, compactly supported functions on $G$. Denote by $h \in C_0(G)^*$ the Haar measure on the compact open subgroup $K < G$. Approximating $\om_n$, we may assume that $\om_n = h \cdot \xi_n^2$, where $\xi_n$ is a sequence of $\Ad K$-invariant functions in $\Pol(K)^+$. Define the representation $\pi : C_0(G) \rtimes^f_{\Ad} G \recht B(L^2(G))$ given by
$$(\pi(F) \xi)(g) = F(g) \xi(g) \quad\text{and}\quad (\pi(x) \xi)(g) = \Delta(x)^{1/2} \, \xi(x^{-1} g x)$$
for all $F \in C_0(G)$, $\xi \in L^2(G)$ and $x,g \in G$. We then define the $K$-equivariant map
$$\mu_n : G \recht C_0(G)^* : \mu_n(x)(F) = \zeta_n(x) \, \langle \pi(F) \pi(x) \xi_n , \xi_n \rangle \; .$$
Since $\xi_n$ is an $\Ad K$-invariant element of $\Pol(K)$ and $\pi(x) \xi_n$ is an $\Ad (x K x^{-1})$-invariant element of $\Pol(x K x^{-1})$, we get that $\mu_n(x)$ is an $\Ad(K \cap x K x^{-1})$-invariant complex measure supported on $K \cap x K x^{-1}$ and having a density in $\Pol(K \cap x K x^{-1})$ with respect to the Haar measure. Since moreover $\zeta_n$ is compactly supported, it follows that the functions $\vphi_n : \Irr(\cC) \recht \C$ associated with $\mu_n$ through \eqref{eq.bijection-mu-vphi} are finitely supported.

Since $\|\om_n \circ \Ad x - \om_n\| \recht 0$ for every $x \in G$, we have that $\|\pi(x) \xi_n - \xi_n\| \recht 0$ for every $x \in G$. Since $\om_n \recht \delta_e$ weakly$^*$, we have that $\langle \pi(F) \xi_n , \xi_n \rangle \recht F(e)$ for every $F \in C_0(G)$. Both together imply that $\vphi_n \recht 1$ pointwise.

To conclude the proof of the theorem, by Lemma \ref{lem.compatible-theta-Psi}, it suffices to prove that $\limsup_n \|\mu_n\|\cb \leq \Lambda(G)$.

Since $\zeta_n$ is a $K$-biinvariant Herz-Schur multiplier on $G$, we can choose a Hilbert space $\cK$ and $K$-biinvariant functions $V_n , W_n : G \recht \cK$ such that
\begin{equation}\label{eq.dec-on-G}
\|V_n\|_\infty \, \|W_n\|_\infty = \|\zeta_n\|\cb \quad\text{and}\quad \zeta_n(z y^{-1}) = \langle V_n(y) , W_n(z) \rangle
\end{equation}
for all $y,z \in G$. We equip $L^2(G) \ot \cK$ with the $*$-representation of $C_0(G) \rtimes^f_{\Ad} G$ given by $\pi(\,\cdot\,) \ot 1$. We define the bounded maps
$$\cV_n : G \recht L^2(G) \ot \cK : \cV_n(y) = \xi_n \ot V_n(y) \quad\text{and}\quad \cW_n : G \recht L^2(G) \ot \cK : \cW_n(y) = \xi_n \ot W_n(y) \; .$$
One checks that
$$\mu_n(zy^{-1})(F) = \langle (\pi(F) \pi(zy^{-1}) \ot 1) \cV_n(y) , \cW_n(z) \rangle$$
for all $y,z$ and that all other conditions in Lemma \ref{lem.char-Psi-mu-cb}.\ref{three} are satisfied, with $\|\cV_n\|_\infty = \|V_n\|_\infty$ and $\|\cW_n\|_\infty = \|W_n\|_\infty$. So, we conclude that
$$\limsup_n \|\mu_n\|\cb \leq \limsup_n \|\zeta_n\|\cb = \Lambda(G)$$
and this ends the proof of the theorem.
\end{proof}

\begin{example}
Taking $G$ as in Example \ref{ex.Baumslag-Solitar}, the category $\cC$ is weakly amenable with $\Lambda(\cC) = 1$. Indeed, $G$ is weakly amenable with $\Lambda(G) = 1$ and the probability measures $\mu_n$ constructed in Example \ref{ex.Baumslag-Solitar} are absolutely continuous with respect to the Haar measure, so that the result follows from Theorem \ref{thm.char-weak-amen}.

Taking $G = \SL(2,F)$ as in Proposition \ref{prop.example-SL-k-F}, we get that $\cC$ is not weakly amenable, although $G$ is weakly amenable with $\Lambda(G) = 1$.
\end{example}

\end{document}